\newcommand{\map}[1]{\xrightarrow{#1}}
\newcommand{\mil}{\varprojlim}
\newcommand{\iso}{\cong}
\newcommand{\Hom}{\mathrm{Hom}}
\newcommand{\End}{\mathrm{End}}
\newcommand{\Q}{\mathbb Q}
\newcommand{\Z}{\mathbb Z}
\newcommand{\R}{\mathbb R}
\newcommand{\F}{\mathbb F}
\newcommand{\co}{\mathcal O}
\newcommand{\Lie}{\mathrm{Lie}}
\newcommand{\ord}{\mathrm{ord}}
\newcommand{\Ainf}{A_\mathrm{inf}}
\newcommand{\HT}{\mathrm{HT}}
\newcommand{\et}{\mathrm{et}}
\newcommand{\crys}{\mathrm{crys}}
\newcommand{\dR}{\mathrm{dR}}
\begin{document}
\author{Benjamin Howard}
\title{On the  Ekedahl-Oort stratification of Shimura curves}
\date{}
\thanks{This research was supported in part by NSF grants DMS1501583 and DMS1801905.}
\address{Department of Mathematics\\Boston College\\ 140 Commonwealth Ave. \\Chestnut Hill, MA 02467, USA}
\email{howardbe@bc.edu}

\begin{abstract}
We study the Hodge-Tate period domain associated to a quaternionic Shimura curve at a prime of bad reduction, 
and give an explicit description of its Ekedahl-Oort stratification.
\end{abstract}

\maketitle


\theoremstyle{plain}
\newtheorem{theorem}{Theorem}[subsection]
\newtheorem{bigtheorem}{Theorem}[section]
\newtheorem{proposition}[theorem]{Proposition}
\newtheorem{lemma}[theorem]{Lemma}
\newtheorem{corollary}[theorem]{Corollary}

\theoremstyle{definition}
\newtheorem{definition}[theorem]{Definition}
\newtheorem{bigdefinition}[bigtheorem]{Definition}

\theoremstyle{remark}
\newtheorem{remark}[theorem]{Remark}
\newtheorem{example}[theorem]{Example}
\newtheorem{question}[theorem]{Question}
\newtheorem*{hypothesis}{Hypothesis}

\numberwithin{equation}{subsection}
\renewcommand{\thebigtheorem}{\Alph{bigtheorem}}
\renewcommand{\thebigdefinition}{\Alph{bigdefinition}}


\section{Introduction}


Fix a prime $p$, and let $C$ be the completion of an algebraic closure of $\Q_p$.
Denote by $\co\subset C$ its ring of integers, and by  $k=\co/\mathfrak{m}$ its residue field.


\subsection{Stratifications of $p$-adic periods domains}


Let  $G$ be a $p$-divisible group over $\co$.   It has a   $p$-adic Tate module
\[
T_p(G) = \Hom( \Q_p / \Z_p , G) 
\] 
and a module of invariant differential forms $\Omega(G)$.  These are  free of finite rank over $\Z_p$ and $\co$, respectively.
Using the canonical trivialization $\Omega(\mu_{p^\infty}) \iso \co$, we   define the   \emph{Hodge-Tate morphism}
\begin{equation}\label{HT}
T_p(G) \iso \Hom( G^\vee , \mu_{p^\infty}) \map{\HT} \Hom(  \Omega(\mu_{p^\infty}) , \Omega(G^\vee)  ) \iso  \Omega(G^\vee),
\end{equation}
where $G^\vee$ is the $p$-divisible group dual to $G$.

\begin{bigtheorem}[Scholze-Weinstein \cite{SW-moduli}]\label{thm:SW}
There is an equivalence between the  category of  $p$-divisible groups over $\co$  and the category of  pairs  $(T,W)$ in which  
\begin{itemize}
\item
$T$ is  a  free $\Z_p$-module of finite rank,  
 \item
$W\subset T\otimes_{\Z_p} C$   is a $C$-subspace. 
  \end{itemize}
The equivalence sends  $G$  to its $p$-adic Tate module $T=T_p(G)$,  endowed with its Hodge-Tate filtration 
\[
W = \mathrm{ker} \big(  T_p(G) \otimes_{\Z_p} C \map{\HT} \Omega(G^\vee)\otimes_\co C \big) .
\]
\end{bigtheorem}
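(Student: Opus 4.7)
The plan is to verify well-definedness of the functor $G \mapsto (T_p(G),W_G)$, then prove full faithfulness, and finally essential surjectivity. Well-definedness is formal: $T_p(G)$ is functorial and free of rank equal to the height of $G$, the Hodge-Tate map (\ref{HT}) is $\co$-linear, and so its base change to $C$ has well-defined kernel $W_G \subset T_p(G)\otimes_{\Z_p}C$, a $C$-subspace.

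For full faithfulness, the key input is a version of Tate's rigidity theorem for $p$-divisible groups over $\co$: the natural map
\[
\Hom_\co(G_1,G_2)\otimes_{\Z_p}\Q_p \hookrightarrow \Hom_C\bigl(T_p(G_1)\otimes_{\Z_p}C,\, T_p(G_2)\otimes_{\Z_p}C\bigr)
\]
is injective, with image characterized by compatibility with the Hodge-Tate filtration---that is, by those $C$-linear maps carrying $W_{G_1}$ into $W_{G_2}$. The integrality clause (no denominators) is automatic, since any $\co$-morphism of $p$-divisible groups induces a $\Z_p$-linear map of Tate modules.

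Essential surjectivity is the main obstacle. Starting from $(T,W)$, the plan is to produce a Breuil-Kisin-Fargues module $M$ over Fontaine's period ring $\Ainf$, pinned down by two pieces of data: its \'etale specialization along $\Ainf\to W(\co^\flat)$ should recover $T$, while its de Rham specialization at $B_\dR^+$ should recover the modification of $T\otimes_{\Z_p}B_\dR^+$ dictated by $W$. Because $\Spec\Ainf$ behaves like a two-dimensional regular local scheme in which the \'etale and de Rham fibers together exhaust the complement of the closed point, these compatible specializations glue uniquely to such an $M$.

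The final and hardest step is to verify that $M$ actually arises from a $p$-divisible group over $\co$. I would descend modulo $p$ to the tilt $\co^\flat$, where $M$ reduces to a classical Dieudonn\'e module over the perfect ring $\co^\flat$; Dieudonn\'e theory then furnishes a $p$-divisible group $G^\flat$ over $\co^\flat$. Grothendieck-Messing deformation theory, with deformation datum supplied by the prescribed $B_\dR^+$-lattice (equivalently, by $W$), then lifts $G^\flat$ to the required $p$-divisible group $G$ over $\co$ with $(T_p(G),W_G) = (T,W)$ by construction.
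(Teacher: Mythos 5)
The paper does not prove Theorem~\ref{thm:SW}; it is quoted from Scholze--Weinstein \cite{SW-moduli}, whose original argument is quite different from your outline (essential surjectivity is obtained there by realizing an arbitrary pair $(T,W)$ inside a Rapoport--Zink space at infinite level, using surjectivity of the Grothendieck--Messing period morphism). Your outline instead follows the later route through $\Ainf$-modules, which is the one the paper itself sketches in \S 2 following \cite{Snotes} and \cite{lau-BKF}. Measured against that route, the main gap is your final step. Grothendieck--Messing deformation theory lifts $p$-divisible groups along surjections with (topologically) nilpotent divided-power kernel, and no such surjection relates $\co^\flat$ to $\co$: these are rings of different characteristic related by tilting, the only available map being $\co^\flat \twoheadrightarrow \co^\flat/(\varpi) \cong \co/(p)$. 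So Dieudonn\'e theory over the perfect ring $\co^\flat$ produces a $p$-divisible group over $\co/(p)$, and lifting it to $\co$ with the Hodge filtration dictated by $W$ --- and then identifying the Tate module of the result with $T$ --- is precisely the content of the window formalism over the divided-power thickening $A_\crys \to \co$ in \S 2.3 of the paper, which is where the hypothesis $p>2$ enters Theorem~\ref{thm:BKF}. This passage from semilinear algebra back to an honest $p$-divisible group over $\co$ is the hardest part of the whole theory, not a routine application of deformation theory.

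Two further points. The ``version of Tate's rigidity over $\co$'' you invoke for full faithfulness --- injectivity of $\Hom_\co(G_1,G_2)\otimes_{\Z_p}\Q_p$ into maps of rational Tate modules, with image exactly the filtration-compatible maps --- cannot be quoted as a known input: $\co$ is non-noetherian with value group dense in $\R$, Tate's original argument does not apply, and this characterization of the image is itself one of the central theorems of \cite{SW-moduli}, so citing it here is essentially circular. (Also, the integrality clause you call automatic is only the easy direction; the needed direction is that a filtration-compatible $\Z_p$-linear map of integral Tate modules arises from an actual morphism, not merely a quasi-morphism.) Finally, in the gluing step the \'etale and de Rham ``fibers'' do not form an open cover of the punctured $\Spec(\Ainf)$: what is required is Kedlaya's extension theorem for vector bundles on $\Spec(\Ainf)$ minus its closed point together with a Beauville--Laszlo gluing along the completion at $\xi$, followed by a verification that the resulting $M$ is finite free and satisfies the minuscule condition $\xi\,\sigma^*M \subset N \subset \sigma^*M$. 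Each of these is a theorem requiring proof, not an observation.
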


Fix a free $\Z_p$-module $T$ of finite rank, and consider the $\Q_p$-scheme
\[
X=\mathrm{Gr}_d(T\otimes_{\Z_p} \Q_p)
\]
 parametrizing   subspaces of $T\otimes_{\Z_p} \Q_p$ of some  fixed dimension $d \le \mathrm{rank}(T)$.
  By the theorem of Scholze-Weinstein, every point $W\in X(C)$
determines a $p$-divisible group $G$ over $\co$, whose reduction to the residue field we denote by $G_k$.  Let $G_k[p]$ be the group scheme 
of $p$-torsion points in $G_k$.

If we declare  two points $W,W'\in X(C)$ to be equivalent when the corresponding reductions $G_k$ and $G'_k$  are isogenous,  the resulting partition   is the \emph{Newton stratification} of $X(C)$.
Alternatively, if we  declare  $W,W'\in X(C)$ to be equivalent when the $p$-torsion group schemes  $G_k[p]$ and  $G'_k[p]$ are isomorphic,   the resulting partition is the \emph{Ekedahl-Oort stratification} of $X(C)$.  

There are similar partitions when  $X$ is replaced by a more sophisticated flag variety, called the \emph{Hodge-Tate period domain}, associated to a Shimura datum of Hodge type and a prime $p$.  This period domain and its Newton stratification were  studied by Caraiani-Scholze \cite{CS}, who proved  that each Newton stratum in $X(C)$ can be realized as the $C$-points of a locally closed subset of the associated adic space. 
For the  Ekedahl-Oort stratification of $X(C)$ there is nothing in the existing literature, and it is not known if it has any structure other than  set-theoretic partition.   

In the case of modular curves, the Hodge-Tate period domain is the projective line $\mathbb{P}^1$ over $\Q_p$.  In this case the  Newton stratification and the Ekedahl-Oort stratification agree, and there are two strata:
the \emph{ordinary locus}   $\mathbb{P}^1(\Q_p)$,  and  the \emph{supersingular locus}  $\mathbb{P}^1(C) \smallsetminus \mathbb{P}^1(\Q_p)$.  

For  the compact Shimura curve determined by an indefinite quaternion algebra over $\Q$, and a prime $p$ at which the quaternion algebra is ramified,   the Hodge-Tate period domain $X$ is a twisted form of $\mathbb{P}^1$.  All points of $X(C)$ give rise to supersingular $p$-divisible groups over $k$, and  the Newton stratification consists of a  single stratum, $X(C)$ itself.  In contrast, the Ekedahl-Oort stratification is nontrivial, and the goal of this paper is to make it explicit.

Although the  methods used here are fairly direct,  it is not clear  how far they can be extended.
The case of Hilbert  modular surfaces may already require new ideas.

For background on the  classical  Ekedahl-Oort stratification of  reductions of  Shimura varieties (as opposed to their Hodge-Tate period domains), we refer the reader to the  work of Oort \cite{Oort01},  Moonen \cite{Moonen}, Viehmann-Wedhorn \cite{VW},  Zhang \cite{Zhang}, and the references found therein.


\subsection{The Shimura curve period domain}


 Let $\Q_{p^2}\subset C$ be the unique unramified quadratic extension of $\Q_p$, and let  $\Z_{p^2} \subset \co$ be its ring of integers.  
 Denote by   $x\mapsto \overline{x}$ the nontrivial automorphism of $\Q_{p^2}$.  Define a  non-commutative  $\Z_p$-algebra of rank $4$ by
\[
\Delta=\Z_{p^2}  [\Pi]  ,
\]
where $\Pi$ is subject to the relations  $\Pi^2=p$ and $\Pi\cdot x= \overline{x} \cdot  \Pi$ for all $x\in \Z_{p^2}$.  
In other words,  $\Delta$ is the unique maximal order in the unique quaternion  division algebra over $\Q_p$.

Let $T$ be a free $\Delta$-module of rank one, and let $X$ be the smooth projective variety over   $\Q_p$  with functor of points 
\begin{equation}\label{drinfeld HT}
X(S) = \left\{  \begin{array}{c} 
\mbox{$\co_S$-module local direct summands} \\ W \subset T\otimes_{\Z_p} \co_S \\ \mbox{of rank $2$  that are stable under $\Delta$}
\end{array} \\  \right\} 
\end{equation}
for any $\Q_p$-scheme $S$.   This is the Hodge-Tate period domain associated to a quaternionic  Shimura curve.

 As we explain in \S \ref{ss:main setup},   our period domain becomes isomorphic to the projective line after base change to $\Q_{p^2}$, and any choice of $\Delta$-module generator $\lambda \in T$  determines a bijection
\begin{equation}\label{intro coordinates}
  X(C) \iso C \cup \{ \infty\} .
\end{equation}
After fixing such a choice, we normalize  the valuation $\ord: C \to \R\cup \{\infty\}$ by $\ord(p) =1$, extend it to $C\cup \{\infty\}$ by $\ord(\infty)=-\infty$, and use (\ref{intro coordinates}) to view $\ord$ as a function
\[
\ord : X(C) \to  \R \cup\{ -\infty ,\infty \}.
\]

The theorem of Scholze-Weinstein  provides a  canonical bijection
\[
X(C) \iso 
 \left\{  \begin{array}{c}
\mbox{isomorphism classes of $p$-divisible groups $G$ over $\co$ } \\ \mbox{of height $4$ and dimension $2$, endowed  with an action} \\ \mbox{of $\Delta$ and a $\Delta$-linear isomorphism $T_p(G) \iso T$ }
\end{array} \right\} .
\]
By forgetting the level structure $T_p(G) \iso T$, reducing to the residue field, and then taking $p$-torsion subgroups, we obtain a function
\[
X(C) \to  \left\{  \begin{array}{c}
\mbox{isomorphism classes of finite group schemes} \\ \mbox{over $k$, endowed with an action of $\Delta/p\Delta$} 
\end{array} \right\}  
\]
sending $G\mapsto G_k[p]$, whose   fibers   are the \emph{Ekedahl-Oort strata} of $X(C)$.

\begin{hypothesis}
    For the rest of this introduction, we assume $p>2$.  Theorems \ref{bigone} and \ref{bigtwo} below are presumably true without this hypothesis, but we are unable to provide a proof.  See the remarks following Theorem \ref{thm:BKF}.
    \end{hypothesis}

It is convenient to organize the strata into two types:  those on which the $p$-torsion group scheme $G_k[p]$ is superspecial (in the sense of \S \ref{ss:reduction}), and those on which it is not.
The two theorems that follow show that there are three superspecial strata, and two infinite families of non-superspecial strata.  These results are proved in  \S \ref{ss:main theorems}, where the reader will also find an explicit recipe for computing   the Dieudonn\'e module of the  $p$-torsion group scheme $G_k[p]$ attached to a point of $X(C)$.

\begin{bigtheorem}\label{bigone}
The conditions
\[
\frac{1}{p+1} < \ord(\tau) < \frac{p}{p+1}
\]
on  $\tau \in  X(C)$ define an Ekedahl-Oort stratum, as do each one of 
\[
\ord(\tau) < \frac{1}{p+1}  , \qquad 
 \frac{p}{p+1}  <  \ord(\tau)  .
\]
The union of these three strata is the locus of points with superspecial reduction.  In particular, 
 the isomorphism class of the finite  group scheme $G_k[p]$  is the same all on three strata, but the isomorphism class of $G_k[p]$ with its  $\Delta$-action  is not.
  \end{bigtheorem}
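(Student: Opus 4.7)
My approach is to extract the Dieudonn\'e module $M = \Dss(G_k)$ from the coordinate $\tau$ via Scholze-Weinstein composed with the Breuil-Kisin-Fargues dictionary (Theorem \ref{thm:BKF}), and then to distinguish the three strata by direct inspection of $(F, V, \Pi)$ on $M/pM$. After fixing the $\Delta$-generator $\lambda \in T$ that yields the coordinate (\ref{intro coordinates}) and decomposing $T \otimes_{\Z_p} \Q_{p^2}$ into its two $\Z_{p^2}$-eigenspaces, the $\Delta$-stability condition in (\ref{drinfeld HT}) forces the Hodge-Tate filtration $W_\tau$ to be cut out by a single equation involving $\tau$. Feeding $W_\tau$ into the BKF machine produces $M$ as a free $W(k)$-module of rank $4$ with $F$, $V$, and $\Delta$-action, whose structure constants in a natural basis are explicit expressions built from Witt-vector lifts of the reductions of $\tau$ and $p/\tau$.

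The two breakpoints arise because the BKF construction introduces a Frobenius twist: the natural valuation comparisons are $\ord(\tau^p)$ versus $\ord(p/\tau)$, which equate at $\ord(\tau) = 1/(p+1)$, and $\ord(\tau)$ versus $\ord(p^p/\tau^p)$, which equate at $\ord(\tau) = p/(p+1)$. In each of the three open regions of the theorem, exactly one side of each comparison wins, forcing a definite subset of the structure constants to reduce to zero in $k$. The resulting normal form for $(F,V)$ admits, in each region, a change of basis exhibiting $M$ as the Dieudonn\'e module of $E[p^\infty] \oplus E[p^\infty]$ for a supersingular elliptic curve $E$. This simultaneously proves superspeciality on the union of the three regions and identifies $G_k[p]$ as a group scheme: the same standard superspecial $p$-torsion scheme in all three cases. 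At the boundary values $\ord(\tau) \in \{1/(p+1), p/(p+1)\}$ the two competing structure constants both have nonzero reduction, which prevents any such splitting of $M$ and rules out superspeciality; this complementary locus is handled by Theorem \ref{bigtwo}.

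To distinguish the three $(\Delta/p\Delta)$-module structures, I would compute the matrix of $\Pi$ on $M/pM$ in the bases already identified. Since $\Pi^2 = p$, the operator $\Pi$ is nilpotent of square zero on $M/pM$, so the isomorphism class of $M/pM$ as a $(\Delta/p\Delta)$-module is recorded by the position of the line $\mathrm{im}(\Pi) \subset M/pM$ relative to $\mathrm{ker}(F)$. A direct calculation in the three open regions shows this line occupies three pairwise distinct positions relative to $\mathrm{ker}(F)$, giving three inequivalent $(\Delta/p\Delta)$-module structures on the same underlying group scheme. The main obstacle is the first step: making the BKF passage sufficiently explicit to write down the structure constants of $F$, $V$, and $\Pi$ on a natural basis. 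Once this has been done, the rest of the argument is a bounded linear-algebra case analysis over $W(k)$ dictated by $\ord(\tau)$.
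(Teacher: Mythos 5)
Your target conclusions are correct, and your identification of the breakpoints $1/(p+1)$ and $p/(p+1)$ as the places where two competing valuations cross is consistent with what actually happens. But there is a genuine gap at the step you yourself flag as ``the main obstacle'': there is no explicit formula for the functor from Hodge--Tate pairs $(T,W_\tau)$ to Breuil--Kisin--Fargues modules, so you cannot ``feed $W_\tau$ into the BKF machine'' and read off structure constants of $F$, $V$, $\Pi$ as expressions in Witt-vector lifts of $\tau$ and $p/\tau$. The equivalence of Theorem \ref{thm:SW} is established abstractly, and the refinement in Theorem \ref{thm:BKF} passes through Lau's windows; neither gives a recipe producing $(M,\phi,\psi)$ from a point of $X(C)$. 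Your entire case analysis rests on having these structure constants in hand, so the proposal as written defers the whole difficulty to an unexecuted first step rather than overcoming it.

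The paper's proof runs the computation in the opposite direction, which is what makes it feasible. One starts on the special-fiber side: the possible $\Delta$-module structures of $\Omega(G_k^\vee)$ fall into three exhaustive cases ($\Pi\Omega(G_k^\vee)=0$, $\Pi\Omega_1(G_k^\vee)\neq 0$, $\Pi\Omega_0(G_k^\vee)\neq 0$), and in each case one chooses a normal form for the mod-$p$ BKF module $M^\flat=M/pM$ over $\co^\flat$ (Lemmas \ref{lem:coordinates 1} and \ref{lem:coordinates 2}). The generator $\lambda$ of $T_p(G)=M^{\psi=1}$ then has coordinates constrained by the semilinear relations (\ref{tropical relations}), and because Theorem \ref{thm:BKF}(3) makes the direction ``BKF module $\to$ Hodge--Tate map'' explicit, a Newton-polygon analysis of those relations yields the valuations of $\tau_0,\tau_1$ (Propositions \ref{prop:first case} and \ref{prop:second case}). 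Since the three cases on the special fiber are exhaustive and produce disjoint valuation regions covering $X(C)$, the biconditional follows; the explicit $(F,V,\Pi)$-structures on $D/pD$ (Theorems \ref{thm:stupidspecial stratum}, \ref{thm:polar stratum 1}, \ref{thm:polar stratum 2}) are then read off from the same normal forms, are constant on each region, and are pairwise non-isomorphic as $\Delta$-modules yet all superspecial --- which is exactly the content of Theorem \ref{bigone}. (Also note a small imprecision in your last step: $\mathrm{im}(\Pi)\subset M/pM$ is a plane, not a line; the three $\Delta$-structures are distinguished by how $\ker(F)$ meets the two graded pieces $\mathbb{D}_0$, $\mathbb{D}_1$ of $\mathrm{im}(\Pi)$.) If you want to salvage your plan, you must replace ``compute $M$ from $\tau$'' with ``classify the possible $M^\flat$ and compute $\tau$ from each.''
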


Now consider the locus of points 
\begin{equation}\label{nssp}
\left\{ \tau \in C : \ord(\tau) = \frac{1}{p+1} \right\}  \cup \left\{ \tau \in C : \ord(\tau) = \frac{p}{p+1} \right\}
\subset X(C)
\end{equation}
at which the corresponding $p$-divisible group does not have superspecial reduction.  
The isomorphism class of the $p$-torsion group scheme $G_k[p]$ is constant on (\ref{nssp}), but the isomorphism class of $G_k[p]$ with its $\Delta$-action varies. 
 In fact, the $\Delta$-action  varies so much that  (\ref{nssp}) decomposes as an infinite disjoint union of Ekedahl-Oort strata.

\begin{bigtheorem}\label{bigtwo}
The fibers of the composition
\[
\left\{ \tau \in C : \ord(\tau) = \frac{1}{p+1} \right\} \map{ \tau \mapsto p/\tau^{p+1}} \co^\times \to k^\times  
\]
are Ekedahl-Oort strata, as are the fibers of the composition
\[
\left\{ \tau \in C : \ord(\tau) = \frac{p}{p+1} \right\} \map{ \tau \mapsto \tau^{p+1}/p^p} \co^\times \to k^\times .
\]
Both unlabeled arrows are reduction to the residue field.
\end{bigtheorem}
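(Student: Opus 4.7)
The plan is to deduce Theorem \ref{bigtwo} from the explicit recipe promised in \S\ref{ss:main theorems}, which computes the Dieudonn\'e module $M(\tau)$ of $G_k[p]$ together with its $\Delta/p\Delta$-action as a function of the coordinate $\tau$. Once $M(\tau)$ is written down in adapted coordinates, what remains is an exercise in semilinear algebra over $k$.

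First I would specialize the recipe to a point $\tau$ with $\ord(\tau) = 1/(p+1)$, so that $a := p/\tau^{p+1}$ lies in $\co^\times$ and reduces to some $\bar a \in k^\times$. In a basis of $M(\tau)$ adapted to the $\Delta$-action---for instance, an $\F_{p^2}$-eigenbasis coming from the embedding $\Z_{p^2}\hookrightarrow\Delta$, split further by $\Pi$---one expects the matrices of $F$, $V$, and $\Pi$ to reduce mod $\mathfrak{m}$ to expressions depending only on $\bar a$; the remaining contributions to these matrices should be of strictly positive valuation, because the shape of the Hodge-Tate filtration $W_\tau$ controls which denominators in $\tau$ can appear, and the valuation condition on $\tau$ pins them down. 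This yields one direction: whenever $\bar a_1 = \bar a_2$, the modules $M(\tau_1)$ and $M(\tau_2)$ are isomorphic as Dieudonn\'e modules with $\Delta/p\Delta$-action. For the converse, the non-superspecial hypothesis on this stratum ensures that the $\F_{p^2}$-decomposition $M(\tau) = M_0 \oplus M_1$ is intrinsic, that $\Pi$ identifies $M_0$ and $M_1$ canonically up to rescaling, and that an appropriate composition of $F$, $V$, and $\Pi$ acts on $M_0$ by a semilinear operator whose normalized eigenvalue (a norm, well-defined thanks to the constraints from the $\Delta$-action and the fixed trivialization $T_p(G)\iso T$) recovers $\bar a$. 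Hence $\bar a$ is a complete invariant of the $\Delta$-enhanced Dieudonn\'e module.

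The locus $\ord(\tau) = p/(p+1)$ is handled by the involution $\tau \mapsto p/\tau$, which swaps the two loci and satisfies $(p/\tau)^{p+1}/p^p = p/\tau^{p+1}$, thereby carrying the proposed invariant on one locus to the proposed invariant on the other. On the side of $p$-divisible groups this involution corresponds to twisting by the outer automorphism $x \mapsto \Pi x \Pi^{-1}$ of $\Delta$ (which swaps the two embeddings $\F_{p^2}\rightrightarrows k$) together with a suitable rescaling of the $\Delta$-module generator $\lambda$ used to set up the coordinate (\ref{intro coordinates}), so the analysis of the second stratum reduces to that of the first.

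The main obstacle is the very first step: making the correspondence $(T, W_\tau) \mapsto M(G_k[p])$ effective with all its $\Delta$-equivariance intact. Scholze-Weinstein (Theorem \ref{thm:SW}) is an abstract equivalence, and extracting the Dieudonn\'e module of the special fiber requires passing through Breuil-Kisin-Fargues modules, the subject of the referenced Theorem \ref{thm:BKF}. Carrying out this passage while keeping track of the $\Delta$-action, and isolating the exact $\tau$-dependence that survives reduction to $k$ on each of the two boundary strata $\ord(\tau)=1/(p+1)$ and $\ord(\tau)=p/(p+1)$, is where the real computation lies; once those matrices are in hand, the linear-algebra classification claimed in Theorem \ref{bigtwo} should follow quickly.
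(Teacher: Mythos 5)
Your architecture matches the paper's: Theorem \ref{bigtwo} is indeed extracted from an explicit presentation of $D/pD$ with its operators $F$, $V$, $\Pi$ in a $\Z_{p^2}$-eigenbasis, and the locus $\ord(\tau)=p/(p+1)$ is reduced to the locus $\ord(\tau)=1/(p+1)$ by the symmetry swapping the two embeddings $j_0,j_1$ (the paper's Theorem \ref{thm:polar stratum 2} is proved by ``reversing the indices $0$ and $1$ everywhere,'' and your identity $(p/\tau)^{p+1}/p^p=p/\tau^{p+1}$ is the right bookkeeping). However, the proposal has two genuine gaps. First, the quantitative heart of the theorem --- that on $\ord(\tau)=1/(p+1)$ the reduction $D/pD$ is the module (\ref{lower family}) whose \emph{only} free parameter is the image $u$ of $-p/\tau^{p+1}$ in $k^\times$ --- is asserted (``one expects the matrices\ldots to reduce mod $\mathfrak{m}$ to expressions depending only on $\bar a$'') rather than derived. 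This is where essentially all the work of the paper sits: one must pass to the Breuil--Kisin--Fargues module mod $p$, normalize a basis over $\Delta\otimes\co^\flat$ (Lemma \ref{lem:coordinates 2}, including the normalizations $t_0=1$ and $s=1$ in the non-superspecial case), solve the semilinear system (\ref{tropical relations 2}) for the coordinates of $\lambda$ by Newton-polygon arguments (Lemmas \ref{lem:newton 2} and \ref{lem:congruence}), and convert the resulting unit of $\co^\flat$ into $-p/\tau^{p+1}$ using the computation $\varpi^\sharp/p\equiv -1\pmod{\mathfrak{m}}$ (Lemma \ref{lem:sharp reduction}); the sign, the exponent $p+1$, and the fact that no further parameters survive all come out of this chain and cannot be read off from the shape of $W_\tau$ alone.

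Second, your converse direction as sketched does not work. The Ekedahl--Oort strata are fibers of $\tau\mapsto(G_k[p]$ with its $\Delta$-action$)$ \emph{after forgetting} the level structure, so an invariant that is ``well-defined thanks to\ldots the fixed trivialization $T_p(G)\iso T$'' does not a priori descend to the isomorphism class you must separate; moreover, over the algebraically closed field $k$ a norm map is surjective with large kernel, so invoking ``a norm'' does not by itself make an eigenvalue of a semilinear operator well defined. What actually makes $u$ intrinsic is the rigidity of the module (\ref{lower family}): the paper checks directly that any $\Delta$-linear map intertwining $(F_u,V_u)$ with $(F_{u'},V_{u'})$ is forced to be a scalar $a\in\F_p$ plus a nilpotent correction, and any such map commutes with $F_u$ and $F_{u'}$, whence $F_u=F_{u'}$ and $u=u'$. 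Some computation of the intertwiner group along these lines is needed to close your argument; the ``composition of $F$, $V$, $\Pi$ with normalized eigenvalue'' you propose is not canonical under rescaling of the basis vector $f_1$, precisely because $F$ and $V$ are semilinear in opposite directions.
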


\begin{remark}
The infinitude of Ekedahl-Oort  strata is a pathology arising from the non-smooth reduction of  compact Shimura curves.
Similar pathologies  for the reductions of Hilbert modular varieties at  ramified primes are described in the appendix to \cite{AG}.
\end{remark}


\subsection{Notation and conventions}


 Throughout the  paper $p$ is a fixed prime. We allow $p=2$ unless otherwise stated. 
 Let $k=\co/\mathfrak{m}$ as above, and denote by $\sigma : k \to k$ the absolute Frobenius $\sigma(x)=x^p$. 
 
 The rings $\Z_{p^2} \subset \co$  and  $\Delta=\Z_{p^2}[\Pi]$ have the same meaning as  above.  We label the embeddings
 \begin{equation}\label{embeddings}
 j_0, j_1: \Z_{p^2} \to \co
 \end{equation}
in such a way that $j_0$ is the inclusion and  $j_1(x) = j_0(\overline{x})$ is its conjugate.


\subsection{Acknowledgements}


The author thanks Keerthi Madapusi Pera for helpful conversations, and the anonymous referees for numerous suggestions and corrections.


\section{Integral $p$-adic Hodge theory}


In this section we recall the integral $p$-adic Hodge theory of an  arbitrary   $p$-divisible group $G$ over $\co$.
The quaternion order $\Delta$ plays no role whatsoever.

Following \cite{fargues},  \cite{Snotes},  and \cite{lau-BKF}, we will attach to $G$ a Breuil-Kisin-Fargues module,
and explain how to extract from it invariants of $G$ such as its Hodge-Tate morphism $T_p(G) \to \Omega(G^\vee)$, and the Dieudonn\'e module of its reduction to $k$.


\subsection{A ring of periods}
\label{ss:tilt}


Let $C^\flat$ be the tilt of $C$, with ring of integers $\co^\flat$.  Thus
\[
\co^\flat =   \mil_{ x\mapsto x^p} \co/(p) 
\]
is a local  domain of characteristic $p$, fraction field  $C^\flat$, and residue field $k=\co^\flat/\mathfrak{m}^\flat$.
An element $x\in \co^\flat$ is given by a sequence $(x_0,x_1,x_2,\ldots)$ of elements  $x_\ell\in \co/(p)$ satisfying $x^p_{\ell+1}=x_\ell$.
After choosing  arbitrary lifts  $x_\ell\in\co$,  set 
\[
x^\sharp = \lim_{\ell\to \infty} x_\ell^{p^\ell}.
\]
The construction $x\mapsto x^\sharp$ defines a multiplicative function $\co^\flat \to \co$, and we define 
 $\ord: \co^\flat \to \R\cup \{\infty\}$ by $\ord(x) =\ord(x^\sharp)$.

Denote by $\sigma: \co^\flat \to \co^\flat$ the absolute Frobenius $x\mapsto x^p$, and in the same way the induced automorphism of the local domain
\[
\Ainf=W(\co^\flat).
\]
There is a canonical homomorphism of $\Z_p$-algebras
\[
\Theta : \Ainf \to \co
\]
satisfying $\Theta([x]) = x^\sharp$, where $[\cdot] : \co^\flat \to \Ainf$ is the Teichmuller lift.

The kernel of $\Theta$ is a principal ideal. 
To construct a generator,  first fix a $\Z_p$-module generator
\[
\zeta = (\zeta_p,\zeta_{p^2},\zeta_{p^3}, \ldots)  \in T_p(\mu_{p^\infty})
\]
 and define
$
\epsilon =( 1 ,\zeta_p,\zeta_{p^2},\ldots) \in \co^\flat.
$
The element 
\[
\xi  = [1]+[\epsilon^{1/p}]+[\epsilon^{2/p}]+ \cdots+ [\epsilon^{(p-1)/p} ] \in \Ainf
\]
generates $\mathrm{ker}(\Theta)$.  If we denote by 
\[
\varpi = 1+\epsilon^{1/p}+\epsilon^{2/p}+ \cdots+ \epsilon^{ (p-1)/p}  \in \co^\flat 
\]
its image under the reduction map $\Ainf \to \Ainf/(p) = \co^\flat$, then $\ord(\varpi)=1$, and   there are  canonical isomorphisms
\[
\co/(p) \iso \Ainf/(\xi,p) \iso \co^\flat/(\varpi).
\]

The following lemma will be needed in the proof of Proposition \ref{prop:second case}.

\begin{lemma}\label{lem:sharp reduction}
The reduction map $\co^\times \to k^\times$ sends $\varpi^\sharp / p  \mapsto -1$.
\end{lemma}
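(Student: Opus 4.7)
The plan is to first obtain a clean expression for $\varpi$ inside $\co^\flat$. From the defining formula one computes directly that $(\epsilon^{1/p}-1)\cdot \varpi = \epsilon - 1$; but because $\co^\flat$ has characteristic $p$, one also has $\epsilon - 1 = (\epsilon^{1/p})^p - 1 = (\epsilon^{1/p} - 1)^p$. Cancelling $\epsilon^{1/p}-1$ in the domain $\co^\flat$ yields the key identity
\[
\varpi = (\epsilon^{1/p}-1)^{p-1} \quad \text{in } \co^\flat.
\]

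Next I would apply the multiplicative map $x \mapsto x^\sharp$ to obtain $\varpi^\sharp = \bigl((\epsilon^{1/p}-1)^\sharp\bigr)^{p-1}$. The $\ell$th component of $\epsilon^{1/p}-1$ is $\zeta_{p^{\ell+1}}-1 \pmod p$, so lifting to $\co$ and reindexing $n=\ell+1$ gives
\[
\varpi^\sharp = \lim_{n \to \infty}(\zeta_{p^n}-1)^{\phi(p^n)},
\]
using $p^{n-1}(p-1)=\phi(p^n)$. Since the reduction $\co^\times \to k^\times$ is continuous and $\varpi^\sharp/p \in \co^\times$, it suffices to prove that
\[
\frac{(\zeta_{p^n}-1)^{\phi(p^n)}}{p} \equiv -1 \pmod{\mathfrak{m}} \qquad \text{for every } n \geq 1.
\]

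For this I would combine the standard evaluation
\[
\prod_{\substack{1 \leq k \leq p^n \\ \gcd(k,p)=1}}\!\bigl(\zeta_{p^n}^k - 1\bigr) \;=\; (-1)^{\phi(p^n)}\,\Phi_{p^n}(1) \;=\; (-1)^{\phi(p^n)}\, p
\]
with the factorization $\zeta_{p^n}^k - 1 = (\zeta_{p^n}-1)(1+\zeta_{p^n}+\cdots+\zeta_{p^n}^{k-1})$, which rewrites the left side as $(\zeta_{p^n}-1)^{\phi(p^n)}$ times a product of $\phi(p^n)$ units in $\co$. Reducing modulo $\mathfrak m$, each such unit becomes $k$ (since $\zeta_{p^n}\equiv 1$), and Wilson's theorem gives
\[
\prod_{\substack{1 \leq k \leq p^n \\ \gcd(k,p)=1}} k \;\equiv\; ((p-1)!)^{p^{n-1}} \;\equiv\; (-1)^{p^{n-1}} \pmod p.
\]
Combining yields $(\zeta_{p^n}-1)^{\phi(p^n)}/p \equiv (-1)^{\phi(p^n)-p^{n-1}} = (-1)^{p^{n-1}(p-2)} \equiv -1 \pmod{\mathfrak m}$, which is the desired congruence.

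The only real obstacle is bookkeeping with the two signs $(-1)^{\phi(p^n)}$ and $(-1)^{p^{n-1}}$, which luckily combine to $-1$ in every characteristic; once the reduction to $\varpi = (\epsilon^{1/p}-1)^{p-1}$ in $\co^\flat$ is in place, the remainder is a direct computation with cyclotomic units and Wilson's theorem.
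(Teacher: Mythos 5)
Your proof is correct, and although it rests on the same two pillars as the paper's argument (the factorization of $p$ into cyclotomic units, plus Wilson's theorem), it is organized differently in two ways worth noting. First, where the paper approximates the components $x_\ell=(\zeta_{p^\ell}-1)/(\zeta_{p^{\ell+1}}-1)$ of $\varpi$ by $(\zeta_{p^{\ell+1}}-1)^{p-1}$ modulo $p\co$ via the binomial theorem and then tracks the error through the $p^\ell$-th power, you prove the exact identity $\varpi=(\epsilon^{1/p}-1)^{p-1}$ in $\co^\flat$ by using the Frobenius in characteristic $p$ and cancelling in the domain $\co^\flat$ (legitimate, since $\epsilon^{1/p}\neq 1$); by multiplicativity of $x\mapsto x^\sharp$ this gives $\varpi^\sharp=\lim_n(\zeta_{p^n}-1)^{\phi(p^n)}$ on the nose and removes all error estimates. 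Second, where the paper evaluates $(1-\zeta_p)^{p-1}/p$ at level $p$ only and then descends from level $p^{\ell+1}$ via total ramification (the reduction of $(\zeta_{p^{\ell+1}}-1)^{p^\ell}/(1-\zeta_p)$ lies in $\F_p^\times$, so its $(p-1)$-st power is $1$), you run the Wilson computation at every level $p^n$ directly using $\Phi_{p^n}(1)=p$. Your sign bookkeeping is right: $(-1)^{p^{n-1}(p-2)}$ equals $-1$ for odd $p$ and equals $+1=-1$ in the residue field when $p=2$, so the argument covers all primes, as it must since this lemma lives in the part of the paper where $p=2$ is permitted. The passage to the limit is also fine because $-1+\mathfrak{m}$ is closed in $\co$. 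Net effect: you trade the paper's ramification step for one extra cyclotomic identity, and the result is, if anything, slightly cleaner.
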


\begin{proof}
By definition,
$
\varpi^\sharp = \lim_{\ell \to \infty} x_\ell^{p^\ell}
$
where
\[
x_\ell =  1 + \zeta_{p^{\ell+1}} + \zeta_{p^{\ell+1}}^2 +\cdots +  \zeta_{p^{\ell+1}}^{p-1}
= \frac{\zeta_{p^\ell} -1 }{\zeta_{p^{\ell+1}}  -1 } \in \co.
\]
The binomial theorem implies that
\[
\zeta_{p^\ell}  = ( \zeta_{p^{\ell+1}} -1 +1)^p    = ( \zeta_{p^{\ell+1}} -1 )^p  + s p ( \zeta_{p^{\ell+1}} -1 )   +1 
\]
for some $s\in \co$.  From this we deduce first that 
\[
x_\ell \equiv  ( \zeta_{p^{\ell+1}} -1 )^{p-1} \pmod{p\co},
\]
and then that
\begin{equation}\label{cyclotomic uniformizer}
x_\ell^{p^\ell}  \equiv ( \zeta_{p^{\ell+1}} -1 )^{(p-1)p^\ell}   \pmod{p^{\ell-1}\co}.
\end{equation}

For $1\le i \le p-1$ set 
\[
u_i  =     \frac{ 1- \zeta_p^i}{1-\zeta_p}  = 1+ \zeta_p+ \cdots + \zeta_p^{i-1} \in \co^\times,
\]
and note that Wilson's theorem implies 
$
u_1\cdots u_{p-1} \equiv -1\pmod{\mathfrak{m}}.
$
Taking $X=1$ in the factorization
\[
X^{p-1}+ \cdots+ X +1 = ( X-\zeta_p)\cdots(X-\zeta_p^{p-1})
\]
shows that 
$
p= (1-\zeta_p)^{p-1} u_1\cdots u_{p-1},
$
and hence
\[
\frac{(1-\zeta_p)^{p-1}}{p} \equiv  -1 \pmod{\mathfrak{m}}.
\]
Combining this with (\ref{cyclotomic uniformizer}) shows that
\[
\frac{x_\ell^{p^\ell}}{p} 
\equiv  \frac{ ( \zeta_{p^{\ell+1}} -1 )^{(p-1)p^\ell}}{p}
\equiv -   \left( \frac{ ( \zeta_{p^{\ell+1}} -1 )^{p^\ell}}{ 1-\zeta_p } \right)^{p-1} \pmod{\mathfrak{m}}.
\]

As  $\Q_p( \zeta_{p^{\ell+1}})$ is totally ramified over $\Q_p$, the reduction of 
\[
 \frac{ ( \zeta_{p^{\ell+1}} -1 )^{p^\ell}}{ 1-\zeta_p }  \in \co^\times
\]
lies in the subgroup $\F_p^\times \subset k^\times$. 
 It follows that $x_\ell^{p^\ell} / p  \equiv -1 \pmod{\mathfrak{m}}$, and hence 
$\varpi^\sharp/p\equiv -1\pmod{\mathfrak{m}}$.
\end{proof}


\subsection{Breuil-Kisin-Fargues modules}
\label{ss:BKF}


There is an equivalence of categories between $p$-divisible groups over $\co$ and Breuil-Kisin-Fargues modules, whose definition we now recall.

\begin{definition}
A \emph{Breuil-Kisin-Fargues module} is a triple $(M,\phi,\psi)$, in which $M$ is a  free module of finite rank over $\Ainf$,
and \[\phi ,\psi :M\to M\]  are  homomorphisms of additive groups satisfying 
\[
\phi(a m) = \sigma(a) \phi(m) ,\quad \psi( \sigma(a) m) = a\psi (m),
\]
for all $a\in \Ainf$ and $m\in M$, as well as $\phi \circ \psi = \xi .$
\end{definition}

Suppose $(M,\phi,\psi)$  is a Breuil-Kisin-Fargues module.
Denote by
\[
\sigma^* M  = \Ainf \otimes_{ \sigma , \Ainf } M
\] 
 the Frobenius twist of $M$, and   by $N$ the image of the $\Ainf$-linear map
\[
 M  \map{ x\mapsto 1\otimes \psi(x)} \sigma^*M.
\] 
It is easy to see that $\xi \sigma^*M \subset N \subset \sigma^*M$.
We construct  various realizations of $M$ as follows:
\begin{itemize}
\item
The \emph{de Rham realization} 
\[
M_\dR  = \sigma^*M / \xi \sigma^*M,
\]
 sits  in the short exact sequence  
\[
0 \to N / \xi\sigma^*M \to M_\dR \to \sigma^*M/N \to 0.
\]
of free $\co$-modules.  Indeed, the freeness of $M_\dR$ is clear, the freeness of $\sigma^*M/N$ follows from the proof of \cite[Lemma 9.5]{lau-BKF}, and the freeness of $N / \xi\sigma^*M$ is a  consequence of this.

\item
The \emph{\'etale realization} is the torsion-free $\Z_p$-module
\[
M_\et  = M^{\psi=1} .
\]
 Its \emph{Hodge-Tate filtration}
\[
F_\HT(M)  \subset  M_{\et} \otimes_{\Z_p} C 
\]
is  the kernel of the $C$-linear extension of 
\[
M_\et  \map{ x\mapsto 1\otimes \psi(x) }    N  /\xi \sigma^* M  . 
\]
\item
The \emph{crystalline realization}
\[
M_\crys  =  W(k) \otimes_{\sigma, \Ainf} M
\]
 is a free $W(k)$-module, endowed with operators
\[
F( a \otimes m) = \sigma(a) \otimes \phi(m) ,\qquad V( a\otimes m) = \sigma^{-1}(a) \otimes \psi(m).
\]
These give $M_\crys$ the structure of a Dieudonn\'e module.
\end{itemize}

The following theorem is no doubt known to the experts, but for lack of a  reference we will explain in the next subsection how to deduce it from the results of \cite{lau-BKF}.

\begin{theorem}[Fargues, Scholze-Weinstein, Lau]\label{thm:BKF}
Assume that $p>2$.
The category of $p$-divisible groups over $\co$ is equivalent to the category of  Breuil-Kisin-Fargues modules.
Moreover, the Breuil-Kisin-Fargues module  $(M,\phi,\psi)$ associated to a $p$-divisible group $G$ enjoys the following properties:
\begin{enumerate}
\item
There are isomorphisms of $\co$-modules
\begin{equation}\label{BKF-dR}
\Omega(G^\vee)  \iso N / \xi \sigma^*M ,\qquad \Lie(G) \iso \sigma^*M/ N.
\end{equation}
\item
If $G_k$ denotes the reduction of $G$ to the residue field $k=\co/\mathfrak{m}$, the covariant Dieudonn\'e module of $G_k$ is isomorphic to $M_\crys$.

\item
There is an isomorphism $T_p(G) \iso M_\et$ making the diagram
\begin{equation}\label{BKF-HT}
\xymatrix{
{ T_p(G) } \ar@{=}[rr] \ar[d]_{\HT}& &  {   M_\et } \ar[d]  \\ 
{ \Omega( G^\vee)   }   \ar@{=}[rr]  & & { N  /\xi \sigma^* M   }
}
\end{equation}
commute, where  the  vertical arrow on the right is the restriction to $M_\et \subset M$ of the $\co$-linear map 
\[
M \map{x\mapsto 1\otimes \psi(x) }  N /   \xi \sigma^*M.
\]
\end{enumerate}
All of these isomorphisms are functorial.
\end{theorem}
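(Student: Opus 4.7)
The plan is to deduce Theorem \ref{thm:BKF} from the main result of Lau \cite{lau-BKF}, which (under the hypothesis $p>2$) directly establishes the equivalence between $p$-divisible groups over $\co$ and Breuil-Kisin-Fargues modules, and which gives natural descriptions of the Dieudonn\'e module, the Hodge filtration, and the Tate module of $G$ in terms of the triple $(M,\phi,\psi)$. With this equivalence in hand, the remaining task is to match Lau's formulas with the specific identifications stated in (\ref{BKF-dR}), (\ref{BKF-HT}), and in part (2).

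For part (1), I would identify the short exact sequence $0 \to N/\xi \sigma^*M \to M_\dR \to \sigma^*M/N \to 0$ with the Hodge filtration on the first crystalline (co)homology of $G$ specialized to $\co$; under this identification the sub is $\Omega(G^\vee)$ and the quotient is $\Lie(G)$. For part (2), I would base change the BKF module along $\Ainf = W(\co^\flat) \to W(k)$ (combined with the Frobenius twist built into the definition of $M_\crys$) and invoke the compatibility of Lau's functor with reduction to the residue field, so that $(\phi,\psi)$ reduce to the operators $(F,V)$ on the covariant Dieudonn\'e module of $G_k$. For part (3), the identification $M_\et \iso T_p(G)$ is a direct output of Lau's construction; the real content is the commutativity of (\ref{BKF-HT}), which I would reduce to the special case $G=\mu_{p^\infty}$. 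This reduction is natural because the Hodge-Tate morphism (\ref{HT}) is defined by transport along the canonical trivialization $\Omega(\mu_{p^\infty}) \iso \co$ together with the duality pairing $T_p(G) \otimes \Hom(G^\vee,\mu_{p^\infty}) \to T_p(\mu_{p^\infty})$.

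For $\mu_{p^\infty}$ the Breuil-Kisin-Fargues module can be written down explicitly, with $M$ free of rank one and $\psi$ essentially multiplication by $\xi$, so that $M^{\psi=1}$ is visibly generated by the chosen $\zeta \in T_p(\mu_{p^\infty})$ and the map $x\mapsto 1\otimes \psi(x)$ into $N/\xi\sigma^*M$ matches the canonical trivialization $\Omega(\mu_{p^\infty})\iso \co$. The main obstacle is the normalization check: one must verify that Lau's identification $M_\et \iso T_p(G)$, the Teichm\"uller conventions entering the definition of $\xi$, and the Frobenius twist in $\sigma^*M$ all align so that the square (\ref{BKF-HT}) commutes on the nose rather than up to a unit. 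Once this is pinned down for $\mu_{p^\infty}$, functoriality of the BKF equivalence under the biduality pairing propagates the compatibility to arbitrary $G$.
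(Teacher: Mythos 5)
Your overall strategy coincides with the paper's: both deduce the equivalence from Lau's results in \cite{lau-BKF} (the paper makes explicit that this goes through the intermediate category of windows over the frame $\underline{A}_\crys$, with one functor from BKF modules and one from $p$-divisible groups), and parts (1) and (2) are handled the same way you propose --- reading $\Omega(G^\vee)\iso Q/IP\iso N/\xi\sigma^*M$ and $\Lie(G)\iso P/Q\iso\sigma^*M/N$ off the construction of the two functors, and obtaining $M_\crys$ by base change along $\Ainf\to W(k)$, $\xi\mapsto p$, using that the window specializes to the Dieudonn\'e crystal of $G_k$ evaluated at $W(k)\to k$.

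Where you genuinely diverge is part (3), and this is also where your plan has a soft spot. You reduce to $G=\mu_{p^\infty}$ and then propagate to general $G$ ``by functoriality under the biduality pairing.'' That propagation requires knowing how the window/BKF equivalence interacts with Cartier duality --- i.e.\ that the BKF module of $G^\vee$ is the (suitably twisted) dual of that of $G$, with all normalizations matching. This compatibility is nontrivial, is not among the inputs you cite, and is exactly the kind of Tate-twist normalization issue the paper flags when explaining why it avoids the conventions of \cite{Snotes}. The paper sidesteps duality entirely by reducing instead to the constant group $\Q_p/\Z_p$: writing $T_p(G)=\Hom(\Q_p/\Z_p,G)$, it computes the window $(P^0,Q^0,\Phi,\Phi_1)$ and BKF module $M^0=\Ainf$ of $\Q_p/\Z_p$ explicitly, identifies $P_\et=\Hom_{\mathrm{Win}}(P^0,P)$ and $M_\et=\Hom_{\mathrm{BKF}}(M^0,M)$ directly from the definitions, and obtains the commutativity of (\ref{BKF-HT}) by stacking two diagrams of covariant $\Hom$ functors; this argument also yields, as a byproduct, the commutativity of the triangle of functors to Hodge--Tate pairs, which is not obvious from the definitions. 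If you want to keep your $\mu_{p^\infty}$ reduction, you must first establish the duality compatibility of the equivalence with explicit normalizations; otherwise you should replace that step with the $\Hom(\Q_p/\Z_p,-)$ reduction, for which everything needed is already contained in the constructions of the two window functors.
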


Some comments on this theorem are warranted, particularly regarding the restriction to $p>2$.
A  functor\footnote{Fargues only considers formal $p$-divisible groups, and imposes a corresponding restriction on Breuil-Kisin-Fargues modules.} from Breuil-Kisin-Fargues modules  to $p$-divisible groups over $\co$, but not a proof that it is an equivalence of categories,  first appeared in the work Fargues \cite[\S 4.8.1]{fargues}.  His construction of the functor makes essential use of the theory of \emph{windows}  introduced by Zink \cite{zink} and extended by Lau \cite{lau-frames, lau-BKF}, and assumes that $p>2$.

A proof of the  equivalence of categories\footnote{Our conventions for Breuil-Kisin-Fargues modules and  the equivalence of categories differ from those of  \cite{Snotes}.
The discrepancy amounts to a Tate twist.}    is found in  \cite[Theorem 14.1.1]{Snotes},
 where the result is attributed  to  Fargues.  
 The construction of the functor in \cite{Snotes} is very different from the  construction of \cite{fargues}, and does not use of the theory of windows.
 Instead, what is proved in \cite{Snotes} is that the category of Breuil-Kisin-Fargues modules is equivalent to the category of pairs $(T,W)$ appearing in Theorem \ref{thm:SW}, hence is equivalent to the category of $p$-divisible groups.   This proof comes with no restriction on $p$.

  The identification of $M_\crys$ with the Dieudonn\'e module of $G_k$ is \cite[Corollary 14.4.4]{Snotes}, and the isomorphism $T_p(G) \iso M_\et$ can be deduced  by carefully tracing through the construction of the equivalence.
  Unfortunately,  the isomorphisms of $\co$-modules (\ref{BKF-dR})  seem difficult to deduce from the description of the equivalence found in \cite{Snotes}.

Because of this, our equivalence of categories will be the one appearing in \cite{lau-BKF}, which follows  Fargues.
What  Lau proves is  that, when $p>2$,  the categories of Breuil-Kisin-Fargues modules and $p$-divisible groups over $\co$ are both equivalent to the category of windows.   
The various properties of the equivalence listed in Theorem \ref{thm:BKF}  can be read off  from the constructions
 of the two  functors into the category of windows, which are quite simple and direct (of course, the proof that they are equivalences is not).

The invocation of Theorem \ref{thm:BKF} in the calculations of \S \ref{s:core} is the only reason why   the assumption $p>2$ is imposed in the introduction.  Our approach in the sequel will  be to allow arbitrary $p$, but to take the conclusions of Theorem \ref{thm:BKF} as hypotheses.


\subsection{Proof of Theorem \ref{thm:BKF}}


As we have already indicated, Theorem \ref{thm:BKF} is proved by relating the categories of Breuil-Kisin-Fargues modules and $p$-divisible groups to the  category of windows introduced by Zink \cite{zink} and extended by Lau \cite{lau-frames, lau-BKF}.

Our windows will be modules over the ring $A_\crys$, which is defined as  the $p$-adic completion of the subring
\[
A_\crys^0=\Ainf[ \xi^n/n! : n=1,2,3,\ldots] \subset \Ainf[1/p].
\]
It  is an integral domain endowed with a  ring homomorphism 
\begin{equation}\label{theta crys}
\Theta_\crys : A_\crys \to \co
\end{equation}
extending $\Theta: \Ainf \to \co$, and divided powers on the kernel 
$
I = \ker (\Theta_\crys).
$

The  subring $A_\crys^0 \subset \Ainf[1/p]$ is stable under $\sigma$, and there is a unique continuous extension to an injective ring homomorphism  
$
\sigma : A_\crys \to A_\crys 
$
reducing to the usual $p$-power Frobenius on $A_\crys/pA_\crys$.  Moreover,
\cite[Lemma 4.1.8]{SW-moduli} and the comments following \cite[(9.1)]{lau-BKF} show that
\[
\sigma(I) \subset p A_\crys \quad \mbox{and}\quad  \frac{ \sigma(\xi)}{p}  \in A_\crys^\times .
\]

The following definition of a window is taken from  \cite[\S 2]{lau-BKF}, 
where it would be called a  \emph{window over the frame} 
\[
\underline{A}_\crys = (A_\crys ,  I ,  \co= A_\crys/I , \sigma ,\sigma_1),
\]  
with  $\sigma_1: I \to A_\crys$ defined by  $\sigma_1(x) = \sigma(x)/p$.

\begin{definition}
A \emph{window} is a quadruple $(P,Q,\Phi,\Phi_1)$ consisting of a projective $A_\crys$-module $P$ of finite rank, a submodule $Q\subset P$, and $\sigma$-semi-linear maps
\[
\Phi : P\to P ,\qquad \Phi_1 : Q\to P
\]
satisfying the following properties:
\begin{itemize}
\item 
there exist $A_\crys$-submodules $L,T\subset P$ such that  
\[
Q = L\oplus IT  ,\qquad P=L\oplus T,
\]
\item  $a\otimes x\mapsto a \Phi_1(x)$ defines a surjection $\sigma^*Q\to P$ of $A_\crys$-modules,
\item $\Phi(ax) = p \Phi_1( a x)$ for all $a\in I$ and $x\in  P$.
\end{itemize}
\end{definition}

\begin{remark}
Taking $a=\xi$ in the final condition yields
\[
\Phi( x) = \frac{p}{\sigma(\xi)}  \cdot  \Phi_1(  \xi x)
\]
for all $x\in P$.  This implies $\Phi(x) = p\Phi_1(x)$ for all  $x\in Q$, and  shows that each one of $\Phi$ and $\Phi_1$ determines the other.
\end{remark}

\begin{remark}
Note that   $IP \subset Q$, and that  $Q/IP$ and $P/Q$ are projective (hence free)  over $A_\crys/I \iso \co$.
\end{remark}

Suppose $G$ is a $p$-divisible group over $\co$.   
Let $P$ be  its crystalline Dieudonn\'e module, evaluated at  the divided power thickening (\ref{theta crys}).  
This is a projective $A_\crys$-module of rank equal to the height of $G$, equipped with a  $\sigma$-semi-linear operator $\Phi: P \to P$  and a short exact sequence
\[
0\to \Omega(G^\vee) \to P / I P \to \Lie(G) \to 0
\]
of free $\co$-modules.  Define  $Q\subset P$ as the kernel of  $P \to   \Lie(G).$
One can show that $\Phi(Q) \subset p P$, allowing us to define $\Phi_1 :  Q \to P$ by 
\[
\Phi_1( x) =  \frac{1}{p}  \cdot \Phi( x).
\]
 The following is a special case of \cite[Proposition 9.7]{lau-BKF}.

\begin{theorem}[Lau]\label{thm:group to window}
The  construction $G\mapsto (P,Q,\Phi,\Phi_1)$ just given defines a functor from the category of $p$-divisible groups over $\co$ to the category of windows.  It  is an equivalence of categories if $p>2$.
\end{theorem}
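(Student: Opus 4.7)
The plan is to verify directly that the constructed quadruple is a window, and then obtain the equivalence when $p > 2$ by combining Zink's classical equivalence between $p$-divisible groups over $k$ and displays with Grothendieck-Messing deformation theory for lifting from $k$ to $\co$. For the first step, the containment $\Phi(Q) \subset pP$ is a standard property of crystalline Dieudonn\'e theory: Frobenius is divisible by $p$ on the Hodge filtration, so $\Phi_1 = \Phi/p : Q \to P$ is well-defined since $P$ is $p$-torsion free. Existence of a normal decomposition $P = L \oplus T$, $Q = L \oplus IT$ follows from the freeness of the $\co$-modules $Q/IP \iso \Omega(G^\vee)$ and $P/Q \iso \Lie(G)$ by lifting splittings from $P/IP$ to $P$ via standard arguments for projective modules over $A_\crys$. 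Surjectivity of $\sigma^*Q \to P$ reduces modulo the kernel of $A_\crys \to W(k)$ to the classical statement that for the Dieudonn\'e module $M$ of a $p$-divisible group over $k$, the operator $V^{-1} : VM \to M$ is surjective. Functoriality is immediate from functoriality of the crystalline Dieudonn\'e module at the PD thickening (\ref{theta crys}) and of the Hodge filtration.

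For the quasi-inverse when $p > 2$, given a window $(P, Q, \Phi, \Phi_1)$ over $A_\crys$, I would first base change along the frame morphism $A_\crys \to W(k)$ induced by reduction $\co \to k$ to obtain a Zink display over $W(k)$, which by Zink's theorem corresponds to a $p$-divisible group $G_k$ over $k$. The submodule $Q \subset P$ then encodes a lift of the Hodge filtration on the crystalline Dieudonn\'e module from $k$ to $\co$; by Grothendieck-Messing theory applied to the divided power thickening $\co \to k$, such a lift uniquely determines a $p$-divisible group $G$ over $\co$ with special fiber $G_k$. One then verifies that the window attached to this $G$ by the forward functor is canonically isomorphic to the original $(P, Q, \Phi, \Phi_1)$, and that the construction is functorial.

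The main obstacle will be verifying that these two functors are mutually quasi-inverse, which rests on reconciling Zink's equivalence on the special fiber with Grothendieck-Messing lifting from $k$ to $\co$, carried out through the window formalism. The hypothesis $p > 2$ is forced by the second ingredient: at $p = 2$, the divided powers on the kernel of $\co \to k$ need not be topologically nilpotent, which obstructs a direct invocation of Grothendieck-Messing (Zink's original framework circumvents this only for formal $p$-divisible groups). Since the theorem is precisely the specialization of \cite[Proposition 9.7]{lau-BKF} to the frame $\underline{A}_\crys$, in practice my proof would follow Lau's argument translated into the present notation, with all the relevant compatibilities checked on windows coming from $p$-divisible groups.
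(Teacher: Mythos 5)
The paper offers no argument for this statement at all: it is stated as a direct specialization of Lau's result, with the one-line justification ``the following is a special case of \cite[Proposition 9.7]{lau-BKF}.'' Your closing paragraph, which defers to exactly that proposition, therefore matches the paper's actual ``proof''; everything before it is extra material that the paper does not attempt. Your verification that $G\mapsto (P,Q,\Phi,\Phi_1)$ lands in windows (divisibility $\Phi(Q)\subset pP$, normal decomposition from freeness of $Q/IP$ and $P/Q$, surjectivity of $\sigma^*Q\to P$ checked after reduction) is a reasonable sketch and consistent with what the paper takes for granted.

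However, the sketched quasi-inverse has a genuine gap: Grothendieck--Messing theory cannot be applied to the surjection $\co\to k$. Its kernel $\mathfrak{m}$ is not nilpotent --- in fact $\mathfrak{m}^2=\mathfrak{m}$ since the value group of $\co$ is divisible --- so it is not a (topologically) nilpotent PD thickening, and ``lift $G_k$ from $k$ to $\co$ in one Grothendieck--Messing step using $Q$'' is not a valid move. This is precisely why Lau's proof does not pass through $\co\to k$ at all: it works with the frame $A_\crys\twoheadrightarrow\co$ (kernel $I=\ker\Theta_\crys$, which does carry divided powers) and with Dieudonn\'e theory over the semiperfect ring $\co/p$, and the restriction to $p>2$ enters through the failure of the divided powers of $p$ (equivalently of $I$) to be nilpotent when $p=2$, not through any feature of $\co\to k$. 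Your diagnosis of where $p>2$ is used is therefore aimed at the wrong thickening, and the reconciliation step you flag as ``the main obstacle'' is exactly the part that your outline, as written, cannot carry out; it genuinely requires Lau's frame-theoretic machinery rather than classical Grothendieck--Messing.
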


Now suppose we start with a Breuil-Kisin-Fargues module $(M,\phi,\psi)$.  
Set 
\begin{equation}\label{MtoP}
P = A_\crys \otimes_{\sigma,\Ainf} M,
\end{equation}
and  define  $\Phi: P \to P$  by 
$
\Phi(a \otimes m) = \sigma(a)  \otimes \phi(m)
$
for all $a\in A_\crys$ and $m\in M$.
The submodule $Q\subset P$, defined as the kernel of the composition
\[
\xymatrix{
{ A_\crys \otimes_{\sigma,\Ainf} M  } \ar[r]  & {   A_\crys/IA_\crys \otimes_{\sigma,\Ainf} M    } \ar[d]^\iso \\
& {  \Ainf / \xi\Ainf  \otimes_{\sigma,\Ainf} M } \ar[d]^\iso   \\
& {  \sigma^*M / \xi\sigma^*M } \ar[r]   &   {  \sigma^*M/N,}
}
\]
 is alternately characterized the $A_\crys$-submodule generated by all elements of the form $1\otimes \psi(m)$ and $a\otimes m$ with $m\in M$ and $a\in I$.  There is a unique $\sigma$-semi-linear map $\Phi_1: Q \to P$ whose effect on these generators is
\[
\Phi_1( 1 \otimes \psi(m)) = \frac{\sigma(\xi)}{p} \otimes m ,\qquad \Phi_1( a\otimes m) = \frac{\sigma(a)}{p} \otimes \phi(m).
\]
The following is a special case of  \cite[Theorem 1.5]{lau-BKF}.

\begin{theorem}[Lau]\label{thm:BKF to window}
The  construction $(M,\phi,\psi)\mapsto (P,Q,\Phi,\Phi_1)$ just given defines a functor from the category of Breuil-Kisin-Fargues modules to the category of windows. It is an equivalence of categories if $p>2$.
\end{theorem}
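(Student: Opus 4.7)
The plan is to verify the functorial construction by direct calculation (which becomes quite short once $\Phi_1$ is defined uniformly as $\Phi/p$), and to treat the equivalence of categories by constructing a quasi-inverse via descent; this last step will be the main obstacle.

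For the functor part, I would define $\Phi_1$ on all of $Q$ by the single formula $\Phi_1 := \Phi/p$; the two explicit formulas in the statement then drop out automatically upon evaluation at the two types of generators of $Q$, and the axiom $\Phi(ax) = p\Phi_1(ax)$ for $a \in I$ is trivial. The content is to check that $\Phi(Q) \subset pP$. On the submodule $IP \subset Q$ this is immediate from $\sigma(I) \subset p A_\crys$. On the remaining generators $1 \otimes \psi(m)$, the defining identity $\phi \circ \psi = \xi$ gives
\[
\Phi(1 \otimes \psi(m)) = 1 \otimes \phi(\psi(m)) = 1 \otimes \xi m = \sigma(\xi) \otimes m,
\]
which lies in $pP$ because $\sigma(\xi)/p \in A_\crys^\times$; since $P$ is $p$-torsion free, $\Phi_1 = \Phi/p$ is then a well-defined $\sigma$-semi-linear map. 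For the decomposition $Q = L \oplus IT$ with $P = L \oplus T$, the canonical identification $P/IP \cong \sigma^*M/\xi\sigma^*M$ identifies the filtration $IP \subset Q \subset P$ with $0 \subset N/\xi\sigma^*M \subset \sigma^*M/\xi\sigma^*M$, whose graded pieces are free over $\co$ by the discussion in Section 2; a standard lifting argument then produces the required $A_\crys$-submodules $L$ and $T$. The surjectivity of $\sigma^*Q \to P$ is immediate from $\Phi_1(1 \otimes \psi(m)) = (\sigma(\xi)/p) \otimes m$ together with the invertibility of $\sigma(\xi)/p$, and functoriality in the BKF module is built in.

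The hard part will be the equivalence of categories when $p > 2$. My strategy would be to construct a quasi-inverse: starting from a window $(P, Q, \Phi, \Phi_1)$, exhibit a canonical $\Ainf$-submodule $M \subset P$ together with semi-linear operators $\phi, \psi$ such that the input window is recovered by the construction above. The difficulty is that $A_\crys$ is not faithfully flat over $\Ainf$, so there is no formal descent; following Lau, one would exploit the divided power structure on $I \subset A_\crys$ (available only for $p > 2$) together with the rigidity of Frobenius-stable lattices, building $M$ by a $p$-adic limit argument over successive divided power thickenings. This restriction enters precisely through the need for divided powers, and is the same obstruction responsible for the analogous hypothesis in Theorem \ref{thm:group to window}; removing it appears to require genuinely new ideas.
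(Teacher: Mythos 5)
The paper offers no proof of this statement at all: it is quoted verbatim as ``a special case of \cite[Theorem 1.5]{lau-BKF}'', so there is nothing internal to compare your argument against, and any honest assessment has to measure your proposal against what Lau actually proves. Your verification of the functor direction is correct and matches the construction as the paper sets it up: defining $\Phi_1=\Phi/p$ and checking $\Phi(Q)\subset pP$ on the two kinds of generators --- via $\sigma(I)\subset pA_\crys$ on $IP$, and via $\phi\circ\psi=\xi$ together with $1\otimes\xi m=\sigma(\xi)\otimes m$ and $\sigma(\xi)/p\in A_\crys^\times$ on the elements $1\otimes\psi(m)$ --- recovers exactly the two formulas in the text, and your normal-decomposition and surjectivity checks are the standard ones (the lifting argument implicitly uses that $A_\crys$ is local and $P$ finitely generated, which is fine). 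This part is a genuine, if routine, verification that the paper itself does not spell out.

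The gap is the second sentence of the theorem. The equivalence of categories for $p>2$ is the entire content of Lau's result, and your proposal does not prove it: ``exhibit a canonical $\Ainf$-submodule $M\subset P$ \ldots building $M$ by a $p$-adic limit argument over successive divided power thickenings'' is a description of a strategy, not an argument --- no candidate for $M$ is written down, no operators $\phi,\psi$ are produced, and nothing is verified. You correctly identify that the failure of faithful flatness of $A_\crys$ over $\Ainf$ blocks naive descent, but that observation is where the real work begins, not where it ends; Lau's proof proceeds by descending through a chain of frames and is genuinely involved. There is also a factual slip in your diagnosis of the $p>2$ hypothesis: the divided power structure on $I=\ker(\Theta_\crys)$ exists for every prime $p$ (the paper defines $A_\crys$ and its divided powers with no restriction), so the restriction cannot ``enter precisely through the need for divided powers.'' It enters through the window/display formalism itself --- the same place it enters in Theorem \ref{thm:group to window} --- and in particular through properties of the frame $\underline{A}_\crys$ and of Zink's theory that degenerate at $p=2$. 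As written, your proposal establishes the first sentence of the theorem and leaves the second as a citation in disguise.
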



Given a window $(P,Q,\Phi,\Phi_1)$, define its \emph{\'etale realization}
\[
P_\et = \{ x\in Q : \Phi_1( x) = x \}
\]
as in \cite[\S 3]{lau-Galois}.
This is a torsion-free $\Z_p$-module equipped with a \emph{Hodge-Tate filtration}
\[
F_\HT(P_\et) \subset P_\et \otimes_{\Z_p} C,
\]
defined as the kernel of the $C$-linear extension of $P_\et \to Q/IP$.

Denote by $\mathrm{HTpair}$ the category of pairs $(T,W)$ in which $T$ is a torsion-free $\Z_p$-module,  and $W\subset T\otimes_{\Z_p} C$ is a subspace. 
Using the obvious notation for the categories of Breuil-Kisin-Fargues modules, $p$-divisible groups over $\co$, and windows, we  now have functors
\begin{equation}\label{functors}
\xymatrix{
{   \mbox{BKF-Mod} }  \ar[r]^{a} \ar[dr]_{d}  &   {    \mbox{Win}     }  \ar[d]_c   & { \mbox{$p$-DivGrp} } \ar[l]_{b}  \ar[dl]^{e} \\
& {   \mbox{HTpair}.   }
}
\end{equation}
Here  $a$ is given by Theorem  \ref{thm:BKF to window}, $b$ is given by Theorem  \ref{thm:group to window},  $c$ sends a window  to its \'etale realization, $d$ does the same for Breuil-Kisin-Fargues modules, and $e$ sends a $p$-divisible group over $\co$ to its $p$-adic Tate module endowed with its Hodge filtration.

\begin{remark}
It is not obvious from the definitions that  (\ref{functors}) commutes.
When $p>2$ the commutativity  is a byproduct of the following proof.
\end{remark}

\begin{proof}[Proof of Theorem \ref{thm:BKF}]
Assume that $p>2$.  In particular the functors of Theorems \ref{thm:group to window} and  \ref{thm:BKF to window} are equivalences of categories, and their composition gives the desired equivalence of categories between $p$-divisible groups over $\co$ and Breuil-Kisin-Fargues modules.

Suppose $G$ is a $p$-divisible group over $\co$, and let $(P,Q,\Phi,\Phi_1)$ and $(M,\phi,\psi)$ be its corresponding window and Breuil-Kisin-Fargues module. 
The isomorphisms
\[
\Omega(G^\vee)  \iso Q/IP  \iso N/\xi\sigma^*M 
\]
and 
\[
\Lie(G) \iso P/Q \iso \sigma^*M/N
\]
are clear  from the constructions of the functors of  Theorems \ref{thm:group to window} and  \ref{thm:BKF to window}.

The quotient map $\co \to k$ induces a ring homomorphism $\Ainf\to W(k)$ sending $\xi \mapsto  p$.  
It follows that there is a unique continuous extension to $A_\crys \to W(k)$ and, by (\ref{MtoP}),  canonical isomorphisms
\begin{equation}\label{crystal reduction}
W(k) \otimes_{A_\crys} P \iso W(k) \otimes_{\sigma,\Ainf} M \iso M_\crys.
\end{equation}
The functor of Theorem \ref{thm:group to window} is constructed in such a way that the leftmost $W(k)$-module  in (\ref{crystal reduction}) is identified with the value of the Dieudonn\'e crystal of $G_k$ at the divided power thickening $W(k) \to k$, which is just the usual covariant Dieudonn\'e module of $G_k$.

The  window of the constant $p$-divisible group $\Q_p/\Z_p$ over $\co$ consists of  
\[
P^0 = A_\crys \quad \mbox{and}\quad  Q^0=A_\crys
\]
 endowed with the operators $\Phi:P^0\to P^0$ and  $\Phi_1: Q^0 \to  P^0$   defined by 
\[
\Phi(x) = p  \sigma(x) \quad \mbox{and}\quad \Phi_1( x) = \sigma(x).
\]
In particular there is a canonical isomorphism 
$
Q^0/IP^0 \iso \co.
$

 The Breuil-Kisin-Fargues module of  $\Q_p/\Z_p$  consists of 
 \[
 M^0=\Ainf
 \]
  endowed with the operators
 \[ 
  \phi(x) = \xi \sigma(x) \quad \mbox{and}\quad \psi(x)=\sigma^{-1}(x).
  \]
The distinguished submodule $N^0\subset \sigma^*M^0$ defined in \S \ref{ss:BKF} is all of $\sigma^*M^0= \sigma^*\Ainf$, so is free of rank one generated by $1\otimes 1$.
Hence  there is a canonical isomorphism
 $
 N^0 / \xi \sigma^*M^0 \iso \co.
 $

From the equivalence of categories of Theorem \ref{thm:group to window} we obtain the commutative diagram
 \begin{equation}\label{etale match 1}
 \xymatrix{
 {T_p(G)} \ar@{=}[d] \ar[r]^{\HT}  &  { \Omega(G^\vee) } \ar@{=}[d]   \\
   { \Hom_{\mathrm{p-DivGrp} }(\Q_p/\Z_p, G) } \ar@{=}[d]  \ar[r] &  { \Hom_\co( \Omega(\mu_{p^\infty}) , \Omega(G^\vee) )} \ar@{=}[d]   \\
    {  \Hom_{\mathrm{Win}}(P^0,P) } \ar@{=}[d]\ar[r]    &    {  \Hom_\co( Q^0/IP^0 , Q/IP ) }  \ar@{=} [d]  \\
     {  P_\et  } \ar[r]  &     {  Q/IP.  }
     }
 \end{equation}
Similarly, from the equivalence of categories of Theorem \ref{thm:BKF to window} we obtain the commutative diagram
 \begin{equation}\label{etale match 2}
 \xymatrix{
 { M_\et } \ar@{=}[d] \ar[r]  &  { N/\xi\sigma^*M } \ar@{=}[d]   \\
   { \Hom_{\mathrm{BKF}}(M^0, M) } \ar@{=}[d]   \ar[r] &  { \Hom_\co(  N^0 / \xi \sigma^*M^0 ,  N/\xi\sigma^*M )} \ar@{=}[d]   \\
    {  \Hom_{\mathrm{Win}}(P^0,P) } \ar@{=}[d]\ar[r]    &    {  \Hom_\co( Q^0/IP^0 , Q/IP ) }  \ar@{=} [d]  \\
     {  P_\et  } \ar[r]  &     {  Q/IP.  }
     }
 \end{equation}
 Combining these gives  (\ref{BKF-HT}), completing the proof of Theorem \ref{thm:BKF}.

As a final comment, we note that the diagrams  (\ref{etale match 1}) and (\ref{etale match 2}) show that $P_\et$ and $M_\et$ are finitely generated $\Z_p$-modules,  and that 
  (\ref{functors})  commutes.
If we denote by $\mbox{FinHTpair} \subset  \mbox{HTpair}$ the full subcategory of pairs $(T,W)$ with $T$ of finite rank over $\Z_p$, we obtain a commutative diagram
\[
\xymatrix{
{   \mbox{BKF-Mod} }  \ar[r]^{a} \ar[dr]_{d}  &   {    \mbox{Win}     }  \ar[d]_c   & { \mbox{$p$-DivGrp} } \ar[l]_{b}  \ar[dl]^{e} \\
& {   \mbox{FinHTpair}   }
}
 \]
 in which the arrows $a$, $b$, and $e$ are equivalences of categories (the last one by Theorem \ref{thm:SW}).
 Hence all arrows are equivalences of categories.
\end{proof}


\section{Bounding the Hodge-Tate periods}
\label{s:core}


Let $G$ be a $p$-divisible group of height four and dimension two over $\co$, 
endowed  with an action $\Delta \to \End(G)$.

Throughout \S \ref{s:core} we do not require $p>2$. 
 Instead we allow $p$ to be arbitrary, but assume the conclusion of Theorem \ref{thm:BKF}.


\subsection{Hodge-Tate periods}
\label{ss:periods}


The  embeddings (\ref{embeddings}) determine a decomposition
\begin{equation}\label{partial omega}
\Omega(G^\vee) = \Omega_0(G^\vee)  \oplus  \Omega_1(G^\vee) ,
\end{equation}
in which each summand is free of rank one over $\co$, and  $\Z_{p^2}\subset \Delta$ acts on them through  $j_0$ and $j_1$, respectively.  The operator $\Pi$ maps each summand injectively into the other.
Applying $\otimes_\co k$ to (\ref{partial omega}) yields a decomposition
\[
\Omega(G_k^\vee) = \Omega_0(G_k^\vee) \oplus \Omega_1(G_k^\vee)
\]
into one dimensional $k$-vector spaces.

Composing the Hodge-Tate morphism (\ref{HT}) with the two projections yields two \emph{partial Hodge-Tate morphisms}
\[
T_p(G) \map{\HT_0}  \Omega_0(G^\vee)  ,\qquad T_p(G) \map{\HT_1}   \Omega_1(G^\vee)  .
\]
By fixing isomorphisms 
\begin{equation}\label{omega trivial}
\Omega_0(G^\vee) \iso \co ,\qquad \Omega_1(G^\vee)\iso \co,
\end{equation}
we view these as $\co$-valued $\Z_p$-linear functionals on $T_p(G)$.

\begin{lemma}
The $\Delta$-module $T_p(G)$ is free of rank $1$.
\end{lemma}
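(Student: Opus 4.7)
The plan is to reduce to a Nakayama-style argument, exploiting that $\Delta$ is a (non-commutative) local ring with unique maximal two-sided ideal $\Pi\Delta = \Delta\Pi$ and residue field $\Delta/\Pi\Delta \iso \F_{p^2}$. Concretely, I will (i) show that $\dim_{\F_{p^2}} T_p(G)/\Pi T_p(G) = 1$, (ii) lift a generator to an element $\lambda \in T_p(G)$ and use Nakayama to promote it to a surjection $\Delta \to T_p(G)$ sending $1 \mapsto \lambda$, and (iii) check by a $\Z_p$-rank count that this surjection is an isomorphism.

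For step (i), observe that since $T_p(G) \otimes_{\Z_p} \Q_p$ is a module over the quaternion division algebra $\Delta\otimes_{\Z_p}\Q_p$ in which $\Pi$ is a unit, the action of $\Pi$ on $T_p(G)$ is injective. Together with $\Pi^2 = p$, this gives a chain $p T_p(G) \subset \Pi T_p(G) \subset T_p(G)$ in which multiplication by $\Pi$ induces an isomorphism of abelian groups $T_p(G)/\Pi T_p(G) \iso \Pi T_p(G)/p T_p(G)$. Since $|T_p(G)/p T_p(G)| = p^4$ (the Tate module has $\Z_p$-rank equal to the height $4$), each quotient has order $p^2$, so $T_p(G)/\Pi T_p(G)$ is one-dimensional over $\F_{p^2}$.

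For steps (ii) and (iii), choose $\lambda \in T_p(G)$ whose reduction generates $T_p(G)/\Pi T_p(G)$, and set $N = \Delta\lambda$. By construction $T_p(G) = N + \Pi T_p(G)$, so the finitely generated $\Delta$-module $T_p(G)/N$ satisfies $T_p(G)/N = \Pi \cdot (T_p(G)/N)$. Since $\Pi$ lies in the Jacobson radical of the local ring $\Delta$, Nakayama's lemma (valid for finitely generated modules over non-commutative local rings) forces $T_p(G)/N = 0$. The resulting surjection $\Delta \twoheadrightarrow T_p(G)$ is then an isomorphism by comparing $\Z_p$-ranks (both equal $4$). The only mildly delicate ingredient is the injectivity of $\Pi$ on $T_p(G)$, which crucially uses that $\Delta\otimes_{\Z_p}\Q_p$ is a \emph{division} algebra; everything else is formal.
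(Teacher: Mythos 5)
Your proof is correct, but it takes a genuinely different route from the paper's. The paper argues globally: since $\Delta\otimes_{\Z_p}\Q_p$ is a division ring, $T_p(G)\otimes_{\Z_p}\Q_p$ is automatically free over it, and a dimension count shows it is free of rank one; hence $T_p(G)$ sits as a $\Delta$-lattice inside $\Delta\otimes_{\Z_p}\Q_p$, and the conclusion follows from the classification (cited from Vign\'eras) of such lattices via the discrete valuation on $\Delta$ with uniformizer $\Pi$ --- every one is principal, generated by a power of $\Pi$. You instead run a local Nakayama argument: injectivity of $\Pi$ on $T_p(G)$ (which you, like the paper, extract from the division-algebra property together with torsion-freeness of the Tate module) combines with $\Pi^2=p$ and $|T_p(G)/pT_p(G)|=p^4$ to give $\dim_{\F_{p^2}}T_p(G)/\Pi T_p(G)=1$, and then Nakayama over the noncommutative local ring $\Delta$ plus a $\Z_p$-rank count finishes. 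Your version is more self-contained (no external citation), at the mild cost of verifying the local-ring formalities ($\Pi\Delta=\Delta\Pi$ is the Jacobson radical, Nakayama in the noncommutative setting); the paper's version is shorter on the page and applies verbatim to any $\Delta$-lattice in a rank-one $\Delta\otimes\Q_p$-module without invoking the specific height. Both arguments ultimately rest on the same two facts: $\Delta\otimes\Q_p$ is a division algebra, and $\Pi$ generates the unique maximal two-sided ideal of $\Delta$.
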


\begin{proof}
As $\Delta \otimes\Q_p$ is a division ring, its module $T_p(G)\otimes \Q_p$ is necessarily free.
Comparing $\Q_p$-dimensions shows that it is free of rank one, and hence  $T_p(G)$ is isomorphic to some   (left) $\Delta$-submodule of $\Delta\otimes \Q_p$.
As $\Delta$ admits a discrete valuation  \cite[Lemme II.1.4]{vigneras}   with uniformizer $\Pi$, every such submodule is principal and generated by a power of $\Pi$.
\end{proof}

Fix a $\Delta$-module generator $\lambda \in  T_p(G)$,  and define 
\[
\tau_0 = \frac{\HT_0(\Pi \lambda) }{\HT_0(\lambda) },
\qquad 
\tau_1 = \frac{\HT_1(\Pi \lambda) }{\HT_1(\lambda) }.
\]
These are the \emph{Hodge-Tate periods} of $G$.  In each fraction the numerator or denominator may vanish, but not simultaneously.
Thus the Hodge-Tate periods lie in   $\mathbb{P}^1(C)=C\cup\{\infty\}$.   They do not depend on the choice of (\ref{omega trivial}), 
but do  depend on the choice of generator $\lambda$.

\begin{proposition}\label{prop:period product}
The Hodge-Tate periods satisfy  $\tau_0\cdot \tau_1=p$.
\end{proposition}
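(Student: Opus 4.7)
The plan is to exploit the $\Delta$-equivariance of the Hodge-Tate morphism together with the defining relation $\Pi^2 = p$.

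First, I would note that the Hodge-Tate morphism $\HT: T_p(G) \to \Omega(G^\vee)$ is functorial in $G$ (as is clear from the construction in (\ref{HT})), so it intertwines the $\Delta$-action on $T_p(G)$ coming from $\Delta \to \End(G)$ with the induced $\Delta$-action on $\Omega(G^\vee)$; the latter action is $\co$-linear because endomorphisms of $G$ as a $p$-divisible group over $\co$ induce $\co$-linear maps on invariant differentials. In particular, $\HT(\Pi \lambda) = \Pi \cdot \HT(\lambda)$.

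Next, the relation $\Pi \cdot x = \bar{x} \cdot \Pi$ implies that $\Pi$ interchanges the two eigencomponents for the $\Z_{p^2}$-action on $\Omega(G^\vee)$, so it swaps the summands $\Omega_0(G^\vee)$ and $\Omega_1(G^\vee)$ in (\ref{partial omega}). Projecting the identity $\HT(\Pi \lambda) = \Pi \HT(\lambda)$ onto each summand then yields
\[
\HT_0(\Pi \lambda) = \Pi \cdot \HT_1(\lambda), \qquad \HT_1(\Pi \lambda) = \Pi \cdot \HT_0(\lambda).
\]

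To finish, I would choose $\co$-module generators $e_0 \in \Omega_0(G^\vee)$ and $e_1 \in \Omega_1(G^\vee)$ realizing the trivializations (\ref{omega trivial}), and write $\Pi e_0 = u e_1$ and $\Pi e_1 = v e_0$ for some $u, v \in \co$. Applying $\Pi^2 = p$ to $e_0$ gives $uv = p$. Writing $\HT_0(\lambda) = a_0 e_0$ and $\HT_1(\lambda) = a_1 e_1$, the identities above become $\HT_0(\Pi \lambda) = v a_1 e_0$ and $\HT_1(\Pi \lambda) = u a_0 e_1$, and a direct computation under the trivializations yields $\tau_0 \tau_1 = (v a_1/a_0)(u a_0/a_1) = uv = p$.

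The argument is essentially formal; the only subtle point is verifying the $\Delta$-equivariance of $\HT$, and in particular the $\co$-linearity of the $\Pi$-action on $\Omega(G^\vee)$, which is what permits the scalars $a_i \in \co$ to commute past $\Pi$ in the final computation.
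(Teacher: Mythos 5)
Your proof is correct and follows essentially the same route as the paper: both arguments rest on the $\Delta$-equivariance of the Hodge-Tate morphism, the fact that $\Pi$ interchanges the summands $\Omega_0(G^\vee)$ and $\Omega_1(G^\vee)$ with scalars multiplying to $p$ (via $\Pi^2=p$), and the resulting identities $\HT_0(\Pi\lambda)=s_0\HT_1(\lambda)$, $\HT_1(\Pi\lambda)=s_1\HT_0(\lambda)$. Your write-up merely makes the choice of generators and the $\co$-linearity of the induced $\Pi$-action more explicit than the paper does.
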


\begin{proof}
The action of $\Pi$ on  $\Omega_0(G^\vee) \oplus \Omega_1(G^\vee) $ is  given by 
\[
(\omega_0,\omega_1) \mapsto ( s_0 \omega_1 , s_1\omega_0)
\]
for some $s_0,s_1\in \co$ satisfying $s_0 s_1=p$.   From the $\Delta$-linearity of the   Hodge-Tate morphism we deduce  first 
\[
 \HT_0(\Pi \lambda) = s_0 \cdot \HT_1( \lambda),\qquad 
 \HT_1(\Pi \lambda) = s_1 \cdot \HT_0( \lambda) ,
\]
and then 
\[
\tau_0\cdot \tau_1 = \frac{\HT_0(\Pi \lambda) }{\HT_0(\lambda)  } 
\cdot \frac{\HT_1(\Pi \lambda) }{\HT_1(\lambda)  }  = s_0\cdot s_1=p. \qedhere
\]
\end{proof}


\subsection{Reduction to the residue field}
\label{ss:reduction}


Let $G_k$ be the reduction of $G$ to the residue field $k=\co/\mathfrak{m}$,
and let $(D,F,V)$ be its covariant Dieudonn\'e module.

\begin{definition}
Let $H$ be the $p$-divisible group of a supersingular elliptic curve over $k$. In other words, $H$ is 
 the unique connected $p$-divisible group of height two and dimension one.
The reduction $G_k$ is said to be
\begin{enumerate}
\item
\emph{supersingular} if it is isogenous to  $H\times H$, 
\item
 \emph{superspecial} if it is isomorphic to $H\times H$.  
 \end{enumerate} 
\end{definition}

\begin{remark}
Our notions of supersingular and superspecial depend only on the $p$-divisible group $G_k$, and not on its $\Delta$-action.
This differs from the meaning of superspecial in some literature on Shimura curves, e.g.~\cite{KR}.
\end{remark}

The following proposition, which implies that the notion of superspecial depends only on the $p$-torsion subgroup scheme $G_k[p] \subset G_k$,  is well-known.  For lack of a  reference we provide the proof.

\begin{proposition}\label{prop:oort}
The reduction $G_k$  is supersingular, and the following are equivalent:
\begin{enumerate}

\item $G_k$ is superspecial,

\item 
 there is an isomorphism of group schemes $G_k[p] \iso H[p] \times H[p]$,

\item  $V^2D \subset pD$,

\item  $FD = VD$.
\end{enumerate}
\end{proposition}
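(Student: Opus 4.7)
The plan is to first prove that $G_k$ is supersingular, then close the loop $(1) \Rightarrow (2) \Rightarrow (3) \Leftrightarrow (4) \Rightarrow (1)$. For supersingularity, the $\Delta$-action on $G$ induces an action of the division algebra $B = \Delta \otimes_{\Z_p} \Q_p$ on the rational covariant Dieudonn\'e module $D \otimes \Q$. A $\Q_p$-dimension count shows that $D \otimes \Q$ is one-dimensional, hence simple, as a $B$-module. The Dieudonn\'e--Manin slope decomposition of $D \otimes \Q$ is preserved by all endomorphisms, in particular by $B$, so simplicity forces a single slope, which must equal $\dim(G)/\mathrm{ht}(G) = 1/2$. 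Thus $G_k$ is supersingular.

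For the equivalences, $(1) \Rightarrow (2)$ is tautological. For $(2) \Rightarrow (3)$, I would fix a standard basis $e_1, e_2$ of $D_H$ with $Fe_1 = Ve_1 = e_2$ and $Fe_2 = Ve_2 = pe_1$, so that $V^2 D_H = pD_H$. Since the Dieudonn\'e module of $G_k[p]$ is $D/pD$, the hypothesis $G_k[p] \iso H[p] \times H[p]$ identifies $D/pD \iso (D_H/pD_H)^{\oplus 2}$, and the computation on $D_H$ then gives $V^2 D \subset pD$. For $(3) \Leftrightarrow (4)$, I would use $VF = FV = p$: if $FD = VD$, then $V^2 D = V(FD) = pD$; conversely if $V^2 D \subset pD = V(FD)$, the injectivity of $V$ on the torsion-free module $D$ forces $VD \subset FD$, and equality follows because $D/VD$ and $D/FD$ both have $W(k)$-length $2$ in the supersingular case (so any inclusion between them is an equality).

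The main obstacle is $(4) \Rightarrow (1)$, which requires producing an explicit isomorphism $D \iso D_H \oplus D_H$ of Dieudonn\'e modules from the hypothesis $FD = VD$. Set $L := FD = VD$; then $D/L$ has length $2$ and $L/pD$ has length $2$. My plan is to select $d_1, d_2 \in D$ whose classes form a $k$-basis of $D/L$, verify that the $\Z_p[F,V]$-span of each $d_i$ is a sub-Dieudonn\'e module isomorphic to $D_H$ (the relations $Fd_i, Vd_i \in L$, $FVd_i = pd_i$, and $FD=VD$ translate into the standard presentation of $D_H$), and check that the two resulting submodules are independent summands. The delicate step is this independence, which can be established by a Nakayama-type argument applied to $D/L$ together with the image constraints coming from $F$ and $V$; alternatively one can bypass the explicit construction entirely by invoking Oort's theorem that a supersingular $p$-divisible group whose $p$-kernel is superspecial is itself superspecial. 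The Dieudonn\'e equivalence then upgrades the decomposition to the desired isomorphism $G_k \iso H \times H$.
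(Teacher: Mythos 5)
Your route is the same as the paper's in outline: supersingularity via Dieudonn\'e--Manin plus the quaternionic action, the implications $(1)\Rightarrow(2)\Rightarrow(3)\Rightarrow(4)$ by direct computation with $FV=VF=p$ and length counts, and Oort's theorem for the hard step $(4)\Rightarrow(1)$. But two of your steps do not work as written. First, $D\otimes\Q$ is \emph{not} simple as a $B$-module: since $\mathrm{Frac}(W(k))$ contains $\Q_{p^2}$ it splits $B$, so $B\otimes_{\Q_p}\mathrm{Frac}(W(k))\iso M_2(\mathrm{Frac}(W(k)))$ and the rank-one module $D\otimes\Q$ is a sum of two copies of the standard module, with many proper nonzero $B$-submodules. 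Hence ``simplicity forces a single slope'' is unavailable. What does work: each isoclinic summand is an $M_2(\mathrm{Frac}(W(k)))$-module, hence of even height, which kills every Newton polygon of height $4$ and dimension $2$ except the ordinary and supersingular ones; and the ordinary case is excluded because $B$ would have to embed unitally into $M_2(\Q_p)$ (the endomorphism algebra of the \'etale part), impossible since the minimal faithful $B$-module is $4$-dimensional over $\Q_p$. This is what the paper's one-line appeal to ``the only isogeny class whose endomorphism algebra contains a quaternion division algebra'' amounts to.

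Second, your primary plan for $(4)\Rightarrow(1)$ fails: for an arbitrary lift $d$ of a nonzero class in $D/L$, the Dieudonn\'e submodule generated by $d$ need not be isomorphic to $D_H$. Already in $D=D_H\oplus D_H$, take $d=e_1+[c]e_1'$ with $c\in k$ satisfying $c^{p^2}\neq c$; then
\[
Fd-Vd=\big([c^{p}]-[c^{1/p}]\big)e_2'
\]
is a unit multiple of $e_2'$, and the submodule generated by $d$ works out to $W(k)d+W(k)e_2+W(k)e_2'+pD$, of rank $4$ and index $p$ in $D$. So the delicate point is not the independence of your two submodules but the very existence of generators $d_i$ with $Fd_i=Vd_i$ and $F^2d_i=pd_i$; producing such a rigid basis \emph{is} the content of Oort's theorem, not something you can read off from $FD=VD$ by choosing lifts. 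Your fallback of quoting Oort is exactly what the paper does: with $FD=VD$ it computes
\[
\Hom(\alpha_p,G_k^\vee)\iso\Hom_k\big(D/(FD+VD),k\big)\iso\Hom_k(\Lie(G),k),
\]
a $2$-dimensional space, and invokes \cite[Theorem 2]{Oort75} to conclude $G_k^\vee$, hence $G_k$, is superspecial. So your proof closes only through the fallback; the explicit construction as sketched is a genuine gap. (Your treatment of $(2)\Rightarrow(3)$ and $(3)\Leftrightarrow(4)$ is fine and matches the paper's length-counting argument.)
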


\begin{proof}
The supersingularity of $G_k$ follows from the Dieudonn\'e-Manin classification of isocrystals: one can list all isogeny classes of $p$-divisible groups over $k$ of height four and dimension two, and the supersingular isogeny class is the only one whose endomorphism algebra contains a quaternion division algebra.  

The implication (1) $\implies$ (2) is trivial.  For the implication  (2) $\implies$ (3) it suffices to check that 
$V^2$ kills the Dieudonn\'e module of $H[p]$, which we leave to the reader.  
  
 Next we prove (3) $\implies$  (4).
 If $D' \subset D$ is any $W(k)$-lattice stable under both $F$ and $V$, then its corresponding $p$-divisible group $G'_k$ is isogenous  to $G_k$.  In particular it has dimension $2$, and hence 
\[
D'/VD'  \iso  \Lie(G'_k)
\]
is a $2$-dimensional $k$ vector space.  Applying this with $D'=D$ and $D'=VD$ shows that 
$D/V^2D$ has length $4$ as a $W(k)$-module.  On the other hand, $D/pD$ also has length $4$, proving the first implication in \begin{align*}
V^2 D \subset pD & \implies V^2 D = pD  \implies V D = F D.
\end{align*}

Finally, we prove (4) $\implies$ (1). 
Let $\alpha_p$ be the finite flat group scheme whose Dieudonn\'e module is the $W(k)$-module $k$, endowed with the operators $F=0$ and $V=0$.  If  $FD=VD$ then, using the self-duality of $\alpha_p$, we see that 
\begin{align*}
  \Hom(\alpha_p,G^\vee_k)  & \iso  \Hom(G_k[p],\alpha_p)   \\
& \iso  \Hom_k( D / (FD+VD)  , k) \\
& \iso  \Hom_k( D/VD, k ) \\
& \iso \Hom_k( \Lie(G) , k)
\end{align*}
is a $2$-dimensional $k$-vector space.    It follows from \cite[Theorem 2]{Oort75} that $G_k^\vee$ is superspecial, and hence so is $G_k$.
\end{proof}

Let $(M,\phi,\psi)$ be the  Breuil-Kisin-Fargues module  of $G$.  
The quotient
\[
M^\flat = M/pM
\]
is a free module over $\co^\flat \iso \Ainf/(p)$, endowed with operators $\phi ,\psi : M^\flat \to M^\flat$ satisfying $\phi \circ \psi = \varpi$.
Denote by $N^\flat = N/pN$ the image of 
\[
M^\flat \map{m\mapsto 1\otimes \psi(m)} \sigma^*M^\flat.
\]

 Each of our embeddings $j_0,j_1 : \Z_{p^2} \to \co$ determines a map
\[
\Z_{p^2}  \to \co/p\co \iso \co^\flat / \varpi \co^\flat,
\]
and  these two maps lift uniquely to  $j_0,j_1 : \Z_{p^2}  \to  \co^\flat.$
The action of $\Delta$ on $G$ determines an action  on  $M^\flat$, which induces  a decomposition  
\[
M^\flat=M^\flat_0\oplus M^\flat_1
\]
 analogous to (\ref{partial omega}).  It follows from the next proposition that each factor is free of rank two over $\co^\flat$.

 \begin{proposition}\
 \begin{enumerate}
 \item
 $D$  is free of rank one over  $\Delta \otimes_{\Z_p}W(k)$.
 \item
 $M$ is free of rank one over $\Delta\otimes_{\Z_p}\Ainf$.
 \end{enumerate}
  \end{proposition}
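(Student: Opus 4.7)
The plan is to prove (2) first and deduce (1) from it via base change along $W(k)\otimes_{\sigma,\Ainf}(-)$.

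For (2), I begin with the $\Z_{p^2}$-action: because $\Z_{p^2}\hookrightarrow W(k)\subset\Ainf$, the two embeddings $j_0,j_1$ yield orthogonal idempotents $e_0,e_1\in \Z_{p^2}\otimes_{\Z_p}\Ainf$ and an $\Ainf$-module decomposition $M=M_0\oplus M_1$, with $\Pi$ exchanging the two summands. Since $\Pi^2=p$ and $p$ is a nonzerodivisor on the free $\Ainf$-module $M$, the restrictions $\Pi\colon M_0\to M_1$ and $\Pi\colon M_1\to M_0$ are injective, which forces the $\Ainf$-ranks of $M_0$ and $M_1$ to coincide. Since their ranks sum to $4$, each has rank $2$, and each is a direct summand of the free module $M$, hence a finitely generated projective $\Ainf$-module of constant rank, and therefore free.

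The crux is to determine the relative position of $M_0$ and $M_1$ under $\Pi$. Using the BKF identification $\Omega(G^\vee)\iso N/\xi\sigma^*M$ from Theorem \ref{thm:BKF}, the decomposition $\Omega(G^\vee)=\Omega_0(G^\vee)\oplus\Omega_1(G^\vee)$ into free rank-one $\co$-modules transfers to a matching decomposition of $N/\xi\sigma^*M$. Tracing this carefully---using that the Frobenius $\sigma$ on $\Ainf$ swaps the two embeddings of $\Z_{p^2}$, so $\sigma^*(M_i)$ realizes the $j_{1-i}$-eigenspace of $\sigma^*M$---one shows that the cokernel of $\Pi|_{M_1}\colon M_1\to M_0$ is a free module of rank one over $\co^\flat=\Ainf/p\Ainf$. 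In other words, after choosing suitable $\Ainf$-bases, $\Pi|_{M_1}$ admits the normal form $\mathrm{diag}(1,p)$, and by symmetry so does $\Pi|_{M_0}$.

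With these normal forms established, pick bases $\{e_0',f_0\}$ of $M_0$ and $\{e_1',f_1\}$ of $M_1$ satisfying $\Pi(e_0')=e_1'$, $\Pi(e_1')=pe_0'$, $\Pi(f_0)=pf_1$, and $\Pi(f_1)=f_0$. The element $\lambda=e_0'+f_1\in M$ then satisfies $e_0\lambda=e_0'$, $e_1\lambda=f_1$, $e_0\Pi\lambda=f_0$, and $e_1\Pi\lambda=e_1'$, so the four elements $e_i\lambda,\,e_i\Pi\lambda$ (for $i\in\{0,1\}$) recover the $\Ainf$-basis of $M$. Hence $\lambda$ generates $M$ freely over $\Delta\otimes_{\Z_p}\Ainf$, establishing (2). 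Part (1) then follows by applying $W(k)\otimes_{\sigma,\Ainf}(-)$, which converts $M$ into $M_\crys\iso D$ by Theorem \ref{thm:BKF}, and converts $\Delta\otimes_{\Z_p}\Ainf$ into $\Delta\otimes_{\Z_p}W(k)$.

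The principal obstacle is the normal-form calculation in the second step: although the decomposition of $\Omega(G^\vee)$ into rank-one summands is automatic (from the invertibility of $\Pi$ on $\Omega(G^\vee)\otimes_\co C$), translating this cleanly into the statement that $\Pi|_{M_1}$ has cokernel free of rank one over $\co^\flat$ requires careful bookkeeping of the interaction between the submodule $N\subset\sigma^*M$, the Frobenius twisting, and the $\Z_{p^2}$-isotypic decompositions.
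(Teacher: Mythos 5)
Your reduction of everything to the single claim that $\mathrm{coker}(\Pi\colon M_1\to M_0)$ is free of rank one over $\co^\flat$ is a faithful reformulation: by Nakayama that claim is essentially equivalent to the proposition itself. But that is exactly why the step you label the ``crux'' and defer to ``careful bookkeeping'' is a genuine gap rather than bookkeeping. The input you propose to use --- that $\Omega(G^\vee)\iso N/\xi\sigma^*M$ splits into two free rank-one $\co$-modules, equivalently that the linearized maps $M_i\to\sigma^*M_{1-i}$, $m\mapsto 1\otimes\psi(m)$, each have cokernel $\co\iso\Ainf/\xi$ --- does not suffice. The bad configuration in which $\Pi\colon M_1\to M_0$ is an isomorphism (so that $\Pi\colon M_0\to M_1$ has cokernel $(\co^\flat)^{2}$ and $M/\mathfrak{m}M$ fails to be free over $\Delta\otimes_{\Z_p}k$) is consistent with all of the $\Ainf$-linear data you invoke: the determinant identities coming from $\Pi^2=p$ and $\phi\circ\psi=\xi$, the prescribed cokernels of the linearized $\psi$, and the commutativity of $\Pi$ with everything are all insensitive to the distinction, and the induced action of $\Pi$ on the rank-two subquotient $N/\xi\sigma^*M$ carries no control over $\mathrm{coker}(\Pi|_{M_1})$ itself (indeed the paper later studies points where $\Pi$ kills all of $\Omega(G_k^\vee)$). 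What excludes the bad configuration is an arithmetic fact about the interaction of $\Pi$ with the \emph{semilinear} operators --- morally the same obstruction that makes $\Delta\otimes_{\Z_p}\Q_p$ a division algebra, e.g.\ that $p$ is not a norm from $\Q_{p^2}$ --- and some version of it must actually be invoked.

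The paper argues in the opposite order for precisely this reason: it first proves (1), decomposing $D=D_0\oplus D_1$ and citing the structure theory of \cite[\S 1]{KR} for supersingular Dieudonn\'e modules with $\Delta$-action, in particular the existence of a critical index ($\Pi D_0=VD_0$ or $\Pi D_1=VD_1$), from which $pD_i\subsetneq\Pi D_{1-i}\subsetneq D_i$ follows for both $i$ and a generator $f_0+f_1$ is produced by Nakayama; it then deduces (2) from the isomorphism $D/pD\iso\sigma^*(M/\mathfrak{m}M)$ of Theorem \ref{thm:BKF} and Nakayama again. If you wish to keep your order of argument you must import an analogue of the critical-index statement directly at the level of $(M,\phi,\psi)$. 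Everything downstream of your crux claim --- the passage from the rank-one cokernel to the normal form $\mathrm{diag}(1,p)$, the generator $\lambda=e_0'+f_1$, and the base change along $W(k)\otimes_{\sigma,\Ainf}(-)$ to recover (1) --- is correct as written.
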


\begin{proof}
Reduce (\ref{embeddings}) to ring homomorphisms $j_0,j_1:\Z_{p^2} \to k$, and denote again by 
$j_0,j_1 : \Z_{p^2} \to W(k)$ the unique lifts. 
There is a   decomposition of $W(k)$-modules 
\[
D=D_0\oplus D_1
\]
in such a way that   $\Z_{p^2} \subset \Delta$ acts on the two summands via $j_0$ and $j_1$, respectively.
As in \cite[\S 1]{KR}, these summands  are free of rank $2$ over $W(k)$, and satisfy
\[
p D_0 \subsetneq VD_1 \subsetneq D_0 ,\qquad p D_1 \subsetneq VD_0 \subsetneq D_1.
\]
Moreover, either $\Pi D_0 = VD_0$ or $\Pi D_1=VD_1$ (or both).  

Without loss of generality, we may assume that  $\Pi D_0 = VD_0$, and hence
\[
p D_1 \subsetneq \Pi D_0 \subsetneq D_1.
\] 
Applying $\Pi$ to these inclusions shows that 
\[
p D_0 \subsetneq \Pi D_1 \subsetneq D_0.
\]
If we choose any $f_0\in D_0$ and $f_1\in D_1$ with nonzero images in  $D_0/\Pi D_1$ and $D_1/\Pi D_0$, 
respectively, then  $f_0 , f_1,  \Pi f_0 , \Pi f_1\in D$ reduce to a $k$-basis of $D/pD$.
Using Nakayama's lemma it is easy to see that $D$ is generated by 
$f_0+f_1$  as a $\Delta \otimes W(k)$-module, and the first claim of the proposition follows.

Theorem \ref{thm:BKF} gives us an isomorphism
\[
D/pD \iso  \sigma^*( M / \mathfrak{m} M)
\]
of $\Delta\otimes_{\Z_p}k$-modules, and from what was said above we deduce that $M/\mathfrak{m}M$ is free of rank one over $\Delta\otimes_{\Z_p} k$.  The second claim of the proposition follows easily from this and  Nakayama's lemma.
\end{proof}


\subsection{The case $\Pi \Omega(G_k^\vee)=0$}
\label{ss:first calculation}


We assume throughout \S \ref{ss:first calculation} that  
\[
\Pi  \Omega(G_k^\vee) =0 .
\]
We will analyze the structure of  $M^\flat$,  with its operators $\phi$ and $\psi$, 
and use this to bound the Hodge-Tate periods of $G$.
 The first step is to choose a  convenient basis.

\begin{lemma}\label{lem:coordinates 1}
There are $\co^\flat$-bases $e_0,f_0 \in M^\flat_0$ and $e_1,f_1\in M^\flat_1$ such that the operator $\Pi\in \Delta$ satisfies
\begin{equation}\label{pi good basis 1}
\Pi e_0=0,\qquad \Pi e_1 = 0,\quad \Pi f_0=e_1,\quad \Pi f_1 =e_0,
\end{equation}
and such that $\psi$ satisfies
\begin{equation*}
\psi(e_0)  = t_0  e_1  ,\quad 
\psi(e_1) = t_1 e_0  ,\quad 
\psi(f_0) =   e_1 + t_1  f_1 ,\quad
\psi(f_1) =   e_0 + t_0 f_0  
\end{equation*}
for  scalars $t_0, t_1 \in \co^\flat$ satisfying $\ord(t_0)>0$, $\ord(t_1)>0$, and 
\[
\ord(t_0) + \ord(t_1) = 1/p .
\]
\end{lemma}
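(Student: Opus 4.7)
The plan is to construct the basis from a $(\Delta\otimes_{\Z_p}\Ainf)$-module generator of $M$, read off the form of $\psi$ from its compatibility with $\Pi$ and the $j_0/j_1$-decomposition, use the identifications from Theorem \ref{thm:BKF} to get both the valuation identity and the fact that the remaining two coefficients are units, and finally normalize those units to $1$ by rescaling the generator.

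I will start from a generator $g = g_0 + g_1$ of $M$ as a $(\Delta\otimes \Ainf)$-module, provided by the preceding proposition. The same Nakayama argument used there shows that $g_0, g_1, \Pi g_0, \Pi g_1$ form an $\Ainf$-basis of $M$, so setting $f_i = \bar g_i$ and $e_i = \overline{\Pi g_{1-i}}$ gives $\co^\flat$-bases of $M_0^\flat$ and $M_1^\flat$ with the prescribed $\Pi$-relations (automatic since $\Pi^2=p$ kills $M^\flat$). Because the $\Delta$-action on $M$ is $\Ainf$-linear, $\Pi$ commutes with $\psi$; together with the $\sigma^{-1}$-semilinearity of $\psi$ and the swap of $j_0, j_1$ by $\sigma$, this forces $\psi(M_i^\flat)\subset M_{1-i}^\flat$ and $\psi(\ker\Pi|_{M_i^\flat})\subset \ker\Pi|_{M_{1-i}^\flat}$, so $\psi(e_0) = t_0 e_1$ and $\psi(e_1) = t_1 e_0$ for some $t_0, t_1 \in \co^\flat$. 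Writing $\psi(f_0) = a f_1 + b e_1$ and applying $\Pi$ to both sides identifies $a$ with $t_1$; the analogous argument for $f_1$ yields $\psi(f_1) = t_0 f_0 + d e_0$ for some $d$, leaving only $b, d$ and the valuations of $t_0, t_1$ to be pinned down.

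For this I will reduce Theorem \ref{thm:BKF}(1) modulo $p$ (using $\xi \equiv \varpi$), obtaining $\varpi\sigma^*M^\flat \subset N^\flat$ with both $N^\flat/\varpi\sigma^*M^\flat$ and $\sigma^*M^\flat/N^\flat$ free of rank two over $\co^\flat/\varpi$. The hypothesis $\Pi\Omega(G_k^\vee)=0$ translates to $\Pi N^\flat \subset \mathfrak{m}^\flat\sigma^*M^\flat$; applied to the images of $\psi(f_0)$ and $\psi(f_1)$ it forces $\ord(t_0), \ord(t_1) > 0$. The matrix of the $\co^\flat$-linear map $1\otimes\psi:M^\flat\to\sigma^*M^\flat$ has entries which are the $p$-th powers of those of $\psi$ (because $1\otimes(cm) = \sigma(c)(1\otimes m)$), so its determinant is $(t_0 t_1)^{2p}$. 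The requirement $\sigma^*M^\flat/N^\flat\iso (\co^\flat/\varpi)^2$ forces the invariant factors to be $(1,1,\varpi,\varpi)$ up to units, giving $\ord(\det)=2$ and hence $\ord(t_0)+\ord(t_1)=1/p$. A block-by-block examination of the same matrix (block-antidiagonal because of the $M_0\leftrightarrow M_1$ swap) then shows that $b, d \in (\co^\flat)^\times$, since otherwise the first invariant factor of one block would lie in $\mathfrak{m}^\flat$ along with $t_0^p$ and $t_1^p$.

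Finally, I will normalize by rescaling $g_0 \mapsto u_0 g_0$ and $g_1 \mapsto u_1 g_1$ for units $u_0, u_1 \in \Ainf^\times$. A short computation shows that such a rescaling multiplies $b$ by $\bar u_0^{-(p-1)/p}$ and $d$ by $\bar u_1^{-(p-1)/p}$, while leaving $\ord(t_0)$ and $\ord(t_1)$ unchanged; since $C^\flat$ is algebraically closed, the equations $\bar u_0^{(p-1)/p}=b$ and $\bar u_1^{(p-1)/p}=d$ are solvable in $(\co^\flat)^\times$ and lift to $\Ainf^\times$, yielding the form asserted by the lemma. The main technical subtlety, and the source of the factor $1/p$ rather than $1$ in the valuation identity, is the Frobenius twist incurred in passing from $\psi$ to the $\co^\flat$-linear map $1\otimes\psi$: its matrix entries are $p$-th powers of those of $\psi$, so the $\ord=2$ condition on the determinant translates into $\ord(t_0)+\ord(t_1)=1/p$ rather than $1$.
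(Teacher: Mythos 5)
Your proposal is correct and follows essentially the same route as the paper: choose a basis adapted to the freeness of $M^\flat$ over $\Delta\otimes_{\Z_p}\co^\flat$, use commutation of $\psi$ with $\Pi$ to constrain its matrix, exploit the hypothesis $\Pi\Omega(G_k^\vee)=0$ together with the freeness of $\sigma^*M^\flat/N^\flat$ over $\co^\flat/(\varpi)$ to pin down the valuations and the unit coefficients, and rescale using perfectness and algebraic closedness of $C^\flat$. The only cosmetic difference is that you phrase the key step via the determinant and invariant factors of the (block-antidiagonal) matrix of $1\otimes\psi$, where the paper reads off the same information from the rank of its reduction to $M_4(k)$ and explicit row--column operations.
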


\begin{proof}
As $M^\flat$ is free of rank one over $\Delta\otimes_{\Z_p} \co^\flat$, we may choose a basis such that (\ref{pi good basis 1}) holds, and the relation $\psi \circ \Pi = \Pi\circ \psi$  then implies 
\[
\psi(e_0)  = t_0  e_1  ,\quad 
\psi(e_1) = t_1 e_0  ,\quad 
\psi(f_0) =   u_1e_1 + t_1  f_1 ,\quad
\psi(f_1) =   u_0e_0 + t_0 f_0  
\]
 for  some $u_0,u_1 ,t_0,t_1 \in \co^\flat$.
The submodule $N^\flat \subset \sigma^*M^\flat$ is  generated by 
\begin{align*}
1\otimes \psi(e_0) & = t_0^p  \otimes  e_1  \\ 
1\otimes \psi(e_1) & = t_1^p \otimes  e_0 \\ 
1\otimes \psi(f_0) &= u_1^p \otimes e_1 +  t_1^p \otimes  f_1 \\ 
1\otimes \psi(f_1)&=  u_0^p \otimes  e_0 +  t_0^p \otimes   f_0.
\end{align*}

 Recall that $\mathfrak{m}^\flat \subset \co^\flat$ is the maximal ideal.
The first isomorphism in (\ref{BKF-dR})  identifies $\Omega(G_k^\vee)$  with the image of $N^\flat$ in $ (\sigma^*M^\flat ) / \mathfrak{m}^\flat  (\sigma^*M^\flat )$, and by hypothesis this $k$-vector space is annihilated by $\Pi$.
It  is easy to see from this that  $\ord(t_0)$ and $\ord(t_1)$ are positive.

Using Theorem \ref{thm:BKF}, we see that 
\[
\sigma^*M^\flat / N^\flat     \iso ( \sigma^*M/ N ) \otimes_\co \co/(p)   
 \iso \Lie(G)  \otimes_\co \co/(p)  
 \]
is free of rank two over $\co/(p) \iso \co^\flat/(\varpi)$.
On the other hand,  $\sigma^*M^\flat / N^\flat$ is isomorphic to the cokernel of the matrix
 \[
 \left(  \begin{matrix}
 t_1^p & & u_0^p \\
 & t_0^p & & u_1^p \\
 & & t_0^p  \\
 & & & t_1^p
 \end{matrix} \right) \in M_4(\co^\flat),
 \]
whose reduction  to $M_4(k)$ must therefore have rank $2$.  
This implies that $u_0$ and $u_1$  are units, and using 
 elementary row and column operations one sees that 
 \[
 \sigma^*M^\flat / N^\flat \iso \co^\flat/(t_0 t_1)^p  \oplus \co^\flat/(t_0 t_1)^p .
 \]
Hence  $(t_0 t_1)^p = (\varpi)$.
  Finally, having already seen that $u_0$ and $u_1$ are units,  an easy  calculation   shows that our basis elements may  be rescaled in order to make $u_0=1$ and $u_1=1$.
 \end{proof}

Fix a basis as in Lemma \ref{lem:coordinates 1}.   Theorem \ref{thm:BKF} identifies
\[
T_p(G) /pT(G) =   M^{ \psi =1} / p M^{\psi=1}  \subset  ( M^\flat )^{\psi =1},
\]
and the image of our fixed generator $\lambda\in T_p(G)$ has the form
\[
a_0 e_0+ a_1 e_1+b_0 f_0 + b_1 f_1 \in M^\flat
\]
for some coefficients $a_0,a_1,b_0,b_1\in \co^\flat$ satisfying
\begin{align}\label{tropical relations}
a_0^p & =  a_1 t_1^p +b_1 \\
a_1^p &= a_0 t_0^p + b_0   \nonumber   \\
b_0^p &= b_1 t_0^p    \nonumber  \\
b_1^p&= b_0 t_1^p.  \nonumber 
\end{align}

The first isomorphism of (\ref{BKF-dR}) identifies
\[
\Omega(G^\vee) / p \Omega(G^\vee)=      N / ( pN +  \xi \sigma^*M )    = N^\flat /\varpi \sigma^*M^\flat 
\]
with the direct summand of  $\sigma^* M^\flat / \varpi \sigma^* M^\flat$  generated by the reductions of 
\begin{align*}
1\otimes \psi(f_0)&=1\otimes e_1 + t_1^p\otimes f_1 \in \sigma^*M^\flat \\
1\otimes \psi(f_1)&=1\otimes e_0 + t_0^p \otimes f_0   \in \sigma^*M^\flat.
\end{align*}
If we use this basis to identify
\[
\Omega(G^\vee) / p \Omega(G^\vee)= N^\flat /\varpi \sigma^*M^\flat \iso \co^\flat/(\varpi) \oplus \co^\flat/(\varpi)
\]
then, again using Theorem  \ref{thm:BKF},  the partial Hodge-Tate morphisms
\begin{align*}
T_p(G)/pT_p(G) &  \map{\HT_0}  \Omega_0(G^\vee)  / p \Omega_0(G^\vee) \iso \co^\flat / (\varpi)  \\
T_p(G) /pT_p(G)  &  \map{\HT_1}  \Omega_1(G^\vee)  / p \Omega_1(G^\vee) \iso \co^\flat / (\varpi)  
\end{align*}
are given by
\begin{align}\label{HT swindle}
\HT_0(\lambda)   = a_1^p  &\qquad  \HT_0( \Pi \lambda)   = b_0^p  \\ 
\HT_1(\lambda)   = a_0^p   &\qquad  \HT_1(\Pi \lambda)   = b_1^p  . \nonumber
\end{align}

\begin{lemma}\label{lem:bord}
For $i\in \{0,1\}$, we have
\[
\ord(b_i) = \frac{ 1 }{ p^2-1} + \frac{ p\cdot  \ord(t_i)  }{p+1} .
\]
\end{lemma}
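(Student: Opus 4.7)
The plan is to extract the formula directly from the last two tropical relations in (\ref{tropical relations}), which assert that $b_0^p = b_1 t_0^p$ and $b_1^p = b_0 t_1^p$. Raising the first to the $p$-th power and substituting the second gives
\[
b_0^{p^2} \;=\; b_1^p\, t_0^{p^2} \;=\; b_0 \, t_0^{p^2}\, t_1^p.
\]
Granting for the moment that $b_0 \ne 0$, one then divides to obtain $b_0^{p^2-1} = t_0^{p^2}\, t_1^p$, and taking $\ord$ together with the identity $\ord(t_0)+\ord(t_1) = 1/p$ from Lemma \ref{lem:coordinates 1} yields
\[
(p^2-1)\ord(b_0) \;=\; p^2\ord(t_0) + p\,\ord(t_1) \;=\; p(p-1)\ord(t_0) + 1.
\]
Dividing by $(p-1)(p+1)$ produces the stated value for $\ord(b_0)$, and the claim for $\ord(b_1)$ follows by the symmetric argument: the setup and the relations (\ref{tropical relations}) are invariant under the simultaneous swap of the indices $0$ and $1$.

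The one point requiring care is the nonvanishing of $b_0$ and $b_1$, which is needed in order to divide. From $b_0^p = b_1 t_0^p$ together with $t_0 \ne 0$ (which holds since $0 < \ord(t_0) < 1/p$ by Lemma \ref{lem:coordinates 1}), the vanishing of $b_0$ forces $b_1 = 0$; but then $\Pi\lambda = b_1 e_0 + b_0 e_1$ vanishes in $M^\flat$, so the injection $T_p(G)/pT_p(G) \hookrightarrow M^\flat$ supplied by Theorem \ref{thm:BKF} forces $\Pi\lambda \in pT_p(G)$. This contradicts the fact that $\lambda$ is a $\Delta$-module generator of $T_p(G)$, since its reduction must then generate the rank-one free $\Delta/p\Delta$-module $T_p(G)/pT_p(G)$ over the local ring $\Delta/p\Delta \iso \F_{p^2}[\Pi]/(\Pi^2)$, on which $\Pi$ acts nontrivially. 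Beyond this small observation, the proof is essentially a one-line manipulation, so there is no real obstacle.
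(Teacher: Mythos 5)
Your proof is correct and follows essentially the same route as the paper: both derive $b_i^{p^2-1} = (t_0t_1)^p t_i^{p(p-1)}$ from the last two relations of (\ref{tropical relations}) and take $\ord$, and both reduce the nonvanishing of $b_0,b_1$ to the fact that $\Pi\lambda$ has nonzero image in $T_p(G)/pT_p(G)\subset M^\flat$. (Only a cosmetic quibble: $\Delta/p\Delta$ is the \emph{twisted} algebra $\F_{p^2}[\Pi]$ with $\Pi x=x^p\Pi$, not the commutative $\F_{p^2}[\Pi]/(\Pi^2)$, but this does not affect your argument.)
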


\begin{proof}
As  $\Pi  \lambda\in T_p(G)$ has nonzero image in  
\[
T_p(G) / p T_p(G)  \subset M^\flat,
\]
 we must have $b_0 e_1 + b_1 e_0 \neq 0.$
Therefore  one of $b_0$ and $b_1$ is nonzero.  The relations  (\ref{tropical relations}) then imply  first that both $b_0$ and $b_1$ are nonzero, and then that 
\[
b_i^{p^2-1}   =     (t_0t_1)^p  \cdot  t_i^{p(p-1)}  .
\]
The claim follows by applying $\ord$ to both sides of this equality.
\end{proof}

\begin{lemma}\label{lem:aord}
If we assume that 
\[
\frac{1}{p^2 (p-1)} < \ord(t_1),
\] 
 then 
\begin{align*}
\ord(a_0)  = \frac{ 1 }{ p(p^2-1)} + \frac{  \ord(t_1)  }{p+1}  ,\qquad
 \ord(a_1)   =  \frac{1}{p^2-1}  - \frac{  \ord(t_1)  }{p+1}.
\end{align*}
Of course there is a similar result if $t_1$ is replaced by $t_0$.
\end{lemma}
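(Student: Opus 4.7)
The plan is to derive a single polynomial equation for $a_0$ over $\co^\flat$ by eliminating $a_1$, read off $\ord(a_0)$ from its Newton polygon, and then recover $\ord(a_1)$ from the second tropical relation.

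First, since $\ord(t_1) > 0$, I would solve $a_1 = (a_0^p - b_1)/t_1^p$ from the first relation in (\ref{tropical relations}) and substitute into the second to obtain $(a_0^p - b_1)^p = t_1^{p^2}(a_0 t_0^p + b_0)$. Applying the characteristic-$p$ identity $(x-y)^p = x^p - y^p$ to the left side, and using $b_1^p = b_0 t_1^p$ together with $(t_0 t_1)^p = \varpi$ from Lemma \ref{lem:coordinates 1} to simplify the right side, one arrives at
\[
a_0^{p^2} \;=\; \varpi\, t_1^{p(p-1)}\, a_0 \;+\; b_1^p\bigl(1 + t_1^{p(p-1)}\bigr).
\]

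Next I would apply Newton polygon analysis to this equation. Since $1 + t_1^{p(p-1)}$ is a unit, the three relevant vertices are $(0, p\ord(b_1))$, $(1, 1 + p(p-1)\ord(t_1))$, and $(p^2, 0)$. Substituting $\ord(b_1) = (1 + p(p-1)\ord(t_1))/(p^2-1)$ from Lemma \ref{lem:bord}, the chord joining the first and last vertices has height $(1 + p(p-1)\ord(t_1))/p$ at $x = 1$, which is strictly less than the height of the middle vertex by virtue of $p > 1$. Hence the Newton polygon consists of a single segment of slope $-\ord(b_1)/p$, every root has order $\ord(b_1)/p$, and this gives the claimed value $\ord(a_0) = \frac{1}{p(p^2-1)} + \frac{\ord(t_1)}{p+1}$. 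To pin down $\ord(a_1)$, I would compare the two summands in $a_1^p = a_0 t_0^p + b_0$. Substituting the just-derived formula for $\ord(a_0)$, the value of $\ord(b_0)$ from Lemma \ref{lem:bord}, and using $\ord(t_0) + \ord(t_1) = 1/p$, the difference $\ord(a_0 t_0^p) - \ord(b_0)$ telescopes to $(p-1)\ord(t_0) > 0$. Hence $\ord(a_1^p) = \ord(b_0)$, and rewriting $\ord(t_0) = 1/p - \ord(t_1)$ produces the claimed $\ord(a_1) = \frac{1}{p^2-1} - \frac{\ord(t_1)}{p+1}$.

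The main technical step is the Newton polygon comparison, which rests on the algebraic collapse of the chord height to $(1 + p(p-1)\ord(t_1))/p$; everything else is routine valuation bookkeeping once the polynomial equation for $a_0$ is in hand. Curiously, none of the inequalities used above seem to invoke the hypothesis $\ord(t_1) > 1/(p^2(p-1))$; I expect its real purpose is to pair with the symmetric hypothesis on $\ord(t_0)$ so that the two versions of the lemma together cover the full parameter range $\ord(t_1)\in(0,1/p)$, or to delimit a regularity region needed downstream (e.g.\ in expressing the Hodge-Tate periods $\tau_0, \tau_1$ in terms of $t_0, t_1$).
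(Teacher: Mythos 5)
Your proof is correct, but it takes a genuinely different route from the paper's. The paper never eliminates $a_1$: it applies the ultrametric principle (in an equality $x=y+z$ the two smallest valuations must coincide) directly to the degree-$p$ relation $a_0^p = a_1 t_1^p + b_1$, which produces three cases, and the hypothesis $\ord(t_1) > 1/(p^2(p-1))$ is exactly what is needed to discard two of them --- each would force $\ord(a_1) \le \ord(b_1) - p\,\ord(t_1) < 0$. The value of $\ord(a_1)$ is then extracted from the auxiliary identity $a_1^{p^2} = a_0^p(t_0^p+t_0)^p - (t_0t_1)^p a_1$, where the hypothesis is invoked a second time to show that the first term dominates. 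Your elimination of $a_1$ via Frobenius together with $b_1^p = b_0t_1^p$ and $(t_0t_1)^p = (\varpi)$ from Lemma \ref{lem:coordinates 1} yields the monic equation $a_0^{p^2} = \varpi\, t_1^{p(p-1)}a_0 + b_1^p(1+t_1^{p(p-1)})$, whose Newton polygon is unconditionally a single segment, so no case analysis is needed; and your dominant-term comparison in $a_1^p = a_0t_0^p + b_0$ uses only $\ord(t_0)>0$. Your closing observation is therefore accurate: your argument does not use the hypothesis at all, and in fact proves the formulas for every pair $(t_0,t_1)$ with $\ord(t_0),\ord(t_1)>0$ and $\ord(t_0)+\ord(t_1)=1/p$, including the boundary case $p=2$, $\ord(t_0)=\ord(t_1)=1/4$ that the paper must handle by a separate ad hoc argument at the end of the proof of Proposition \ref{prop:first case} (and your answer there, $\ord(a_0)=\ord(a_1)=1/4$, agrees with the paper's). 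So the hypothesis is genuinely load-bearing in the paper's own proof, not just in the applications, and what your approach buys is a hypothesis-free, case-free statement at the modest cost of invoking a Newton polygon over $C^\flat$ --- a tool the paper itself uses in Lemma \ref{lem:newton 2}.
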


\begin{proof}
Recall the equality
$
a_0^p  =  a_1 t_1^p +b_1 
$
from (\ref{tropical relations}).  The only way this can hold is if  (at least) one of the three relations
\begin{itemize}
\item
$p\cdot \ord(a_0) = \ord(b_1 ) \le \ord(t_1^p a_1)$
\item
$p \cdot  \ord(a_0) = \ord(t_1^p a_1) \le \ord(  b_1)$
\item
$\ord(  b_1) = \ord(t_1^p a_1) \le p \cdot  \ord(a_0)$
\end{itemize}
is satisfied.  The second and third relations cannot be satisfied, as each implies 
\[
0 \le \ord(a_1) \le \ord( b_1)-p\cdot \ord(t_1) 
 =
 \frac{ 1 }{ p^2-1}
 -
 \frac{ p^2 \cdot \ord(t_1)  }{p+1} <0.
\] 
 Hence the first relation holds, and Lemma \ref{lem:bord} shows that
 \[
 p\cdot \ord(a_0) = \ord(b_1) = \frac{ 1 }{ p^2-1} + \frac{ p\cdot  \ord(t_1)  }{p+1}.
 \] 
 This proves the first equality.

 For the second equality, the relations (\ref{tropical relations})  imply 
\begin{align*}
 a_0^{p^2}   
&= a_1^p \cdot  (     t_1^p+     t_1  )^p   -  (t_0t_1)^p    a_0     \\
a_1^{p^2}     &= a_0^p \cdot  (    t_0^p+     t_0  )^p   -   (t_0t_1)^p    a_1      \nonumber.
\end{align*}
Using the second of these, along with  
\begin{align*}
 \ord\big(  a_0^p \cdot  (  t_0^p +   t_0)^p   \big) 
 &=   \ord(b_1)   +   p\cdot \ord(t_0)  \\
& = 
 \frac{ p^2 }{ p^2-1}  -  \frac{  p^2 \cdot \ord(t_1) }{p+1}    <  1 \le \ord\big( (t_0t_1)^pa_1 \big) ,
\end{align*}
we find  that 
\[
 \ord(a_1) =  \frac{ \ord\big(  a_0^p \cdot  (  t_0^p +   t_0)^p   \big) }{p^2} 
 = \frac{1}{p^2-1}  - \frac{  \ord(t_1)  }{p+1}.
\]
  \end{proof}

 Now we can prove the main result of this subsection.
 
 \begin{proposition}\label{prop:first case}
If we assume, as above, that $\Pi  \Omega(G_k^\vee) =0$ then 
\[
\frac{1}{p+1} < \ord(\tau_0) < \frac{p}{p+1}    \quad \mbox{and}\quad \frac{1}{p+1} < \ord(\tau_1) < \frac{p}{p+1}.
\]
\end{proposition}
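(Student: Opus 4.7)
The plan is to combine the formulas (\ref{HT swindle}) with Lemmas \ref{lem:bord} and \ref{lem:aord} to express $\ord(\tau_0)$ and $\ord(\tau_1)$ as explicit affine functions of $\ord(t_0)$ and $\ord(t_1)$, then read off the bounds using the constraint $\ord(t_0) + \ord(t_1) = 1/p$ from Lemma \ref{lem:coordinates 1}.

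The arithmetic should be short. Interpreting (\ref{HT swindle}) as $\tau_0 = b_0^p/a_1^p$ and substituting the two lemmas, the equal $\frac{p}{p^2-1}$ contributions to $p\ord(b_0)$ and $p\ord(a_1)$ cancel, leaving
\[
\ord(\tau_0) \;=\; \frac{p^2\,\ord(t_0) + p\,\ord(t_1)}{p+1} \;=\; \frac{1 + p(p-1)\,\ord(t_0)}{p+1},
\]
where the second equality eliminates $\ord(t_1)$ via $\ord(t_0) + \ord(t_1) = 1/p$. Since both $\ord(t_i)$ are strictly positive and sum to $1/p$, one has $0 < \ord(t_0) < 1/p$, whence $\frac{1}{p+1} < \ord(\tau_0) < \frac{p}{p+1}$ is immediate. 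An identical argument with the indices swapped handles $\tau_1$, and consistency with Proposition \ref{prop:period product} ($\tau_0\tau_1 = p$) provides a useful sanity check.

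Two technical points will need care. First, Lemma \ref{lem:aord} is stated under the hypothesis $\ord(t_1) > \frac{1}{p^2(p-1)}$, with a symmetric version in $t_0$; the identity $\ord(t_0)+\ord(t_1) = 1/p$ forces at least one hypothesis to hold whenever $p \geq 3$, and one should check that the two versions give the same formula for $\ord(a_0)$ and $\ord(a_1)$ on the overlap, so that the computation is well-defined independent of which one applies. Second, and I expect this to be the main obstacle, formulas (\ref{HT swindle}) describe the Hodge-Tate values only modulo $\varpi$, so the identification $\ord(\HT_0(\Pi\lambda)) = p\,\ord(b_0)$ is immediate only when $p\,\ord(b_0) < 1$. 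In the remaining regime $p\,\ord(b_0) \geq 1$ the mod-$p$ reduction vanishes and the naive reading fails; I would handle this either by lifting the relation $\psi(\lambda) = \lambda$ from $M^\flat$ back to $M$ and performing the computation at the $\Ainf$-level, or — for $p \geq 3$ — by observing that the degenerate regimes for $\tau_0$ and $\tau_1$ cannot occur simultaneously and then pinning down the troublesome $\tau_i$ via $\tau_0\tau_1=p$.
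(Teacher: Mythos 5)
Your proposal is correct and follows essentially the same route as the paper's proof: the same formula $\ord(\tau_0)=\frac{1+p(p-1)\ord(t_0)}{p+1}$, the same case split according to which hypothesis of Lemma \ref{lem:aord} holds (under $\ord(t_1)>\frac{1}{p^2(p-1)}$ the paper checks that $\ord(a_1)$ and $\ord(b_0)$ lie in $(0,1/p)$, so the mod-$\varpi$ reading of (\ref{HT swindle}) is legitimate --- exactly the obstacle you flag), and the same use of $\tau_0\tau_1=p$ to transfer the bound to the other period. The only divergence is the residual case $p=2$ (where both hypotheses of Lemma \ref{lem:aord} fail and $\ord(t_0)=\ord(t_1)=1/4$): instead of lifting to $\Ainf$, the paper pins down $\ord(a_0)=\ord(a_1)=1/4$ directly from the relations (\ref{tropical relations}), deduces only the one-sided bounds $\ord(\tau_i)\ge 1/2$ from the vanishing of $b_i^p$ modulo $\varpi$, and forces equality via $\tau_0\tau_1=p$ --- a cheaper device than your proposed $\Ainf$-level computation, and worth adopting.
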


\begin{proof}
First assume that 
\begin{equation}\label{param big}
\frac{1}{p^2(p-1)} < \ord(t_1).
\end{equation}
 The discussion leading to (\ref{HT swindle}) provides us with  an $\co$-module isomorphism
 \[
 \Omega_0(G^\vee) / p \Omega_0(G^\vee) \iso \co^\flat / (\varpi) \iso  \co/(p),
\]
and we fix  any  lift  to an isomorphism $\Omega_0(G^\vee) \iso \co$.

It is easy to see from Lemmas \ref{lem:bord} and \ref{lem:aord} that $\ord(a_1)$ and $\ord(b_0)$ lie in the open interval $(0,1/p)$, and so 
 $a_1^p$ and $b_0^p$ have nonzero images in $\co^\flat/(\varpi)$.
 By  (\ref{HT swindle}) these images agree with 
the images of  $\HT_0(\lambda)$ and $\HT_0(\Pi \lambda)$ under  
\[
\co \to \co/(p) \iso \co^\flat/(\varpi).
\]
Thus 
 \[
\ord(  \HT_0(\lambda) ) =   \ord(a_1^p)  = \frac{p}{p^2-1} - \frac{p \cdot \ord(t_1)}{p+1}  
 \]
 and
  \[
\ord(  \HT_0(\Pi \lambda) ) =   \ord(b_0^p)  = \frac{p}{p^2-1} + \frac{p^2 \cdot \ord(t_0)}{p+1} .
 \]
It follows  that 
\begin{align*}
\ord(\tau_0) & = \ord(  \HT_0(\Pi \lambda) ) - \ord(  \HT_0(\lambda) )   
  =   \frac{p}{p+1}  -   \frac{ (p-1) }{p+1} \cdot \ord(t_1^p),
\end{align*}
and so
\[
 \frac{1}{p+1} < \ord(\tau_0) < \frac{p}{p+1}.
\]
The analogous inequalities for $\ord(\tau_1)$ follow from the relation $\tau_0\tau_1=p$ of Proposition \ref{prop:period product}.

This proves Proposition \ref{prop:first case} under the assumption (\ref{param big}).  The proof when
\begin{equation}\label{param big 2}
\frac{1}{p^2(p-1)} < \ord(t_0)
\end{equation}
is  entirely similar.

Thus we are left to prove the claim under the assumption that both  (\ref{param big}) and (\ref{param big 2}) fail.
This assumption implies that
\[
\frac{1}{p}=\ord(t_0)+\ord(t_1) \le \frac{2}{p^2(p-1)},
\]
which implies that  $p=2$ and 
\[
\ord(t_0)= \frac{1}{4}=\ord(t_1).
\]
In particular,  Lemma \ref{lem:bord}  simplifies to 
\[
\ord(b_0)= \frac{1}{2}=\ord(b_1).
\]

Consider the equality $a_0^2  =  a_1 t_1^2 +b_1 $
of (\ref{tropical relations}).  As in the proof of Lemma \ref{lem:aord}, the only way this can hold is if  (at least) one of the relations
\begin{itemize}
\item
$ \ord(a_0) = 1/4$
\item
$\ord(a_1) = 0$ and $\ord(a_0)\ge 1/4$
\end{itemize}
holds.  Similarly, the equality $a_1^2  =  a_0 t_0^2 +b_0 $ implies that (at least) one of the relations
\begin{itemize}
\item
$ \ord(a_1) = 1/4$
\item
$\ord(a_0) = 0$ and $\ord(a_1)\ge 1/4$
\end{itemize}
holds.  Combining these shows that  $\ord(a_0) = 1/4$ and  $\ord(a_1) = 1/4$.

In particular, $a_1^p$ has nonzero image in $\co^\flat/(\varpi)$, and 
\[
\ord(\HT_0(\lambda)) = \ord(a_1^p) = \frac{1}{2}. 
\]
On the other hand, $b_0^p$ has trivial image in $\co^\flat/(\varpi)$, and so
\[
\ord(\HT_0( \Pi \lambda)) \ge 1.
\]
Therefore
\[
\ord(\tau_0) = \ord(\HT_0( \Pi \lambda))  -\ord(\HT_0(  \lambda))  \ge \frac{1}{2}.
\]
The same reasoning shows that $\ord(\tau_1)\ge 1/2$.
As  $\tau_0\tau_1=p$  by Proposition \ref{prop:period product}, we must therefore have
\[
\ord(\tau_0) = \frac{1}{2} = \ord(\tau_1),
\] 
 completing the proof of Proposition \ref{prop:first case}.
 \end{proof}


\subsection{The case $\Pi \Omega_1(G_k^\vee)\neq 0$}
\label{ss:second calculation}


We assume throughout \S \ref{ss:second calculation} that 
\[
\Pi  \Omega_1(G_k^\vee) \neq 0. 
\]
Once again, we will analyze the structure of $M^\flat=M/pM$, and use this to bound the Hodge-Tate periods of $G$.
As in \S \ref{ss:first calculation}, the first step is to choose a convenient basis for $M^\flat$.

\begin{lemma}\label{lem:coordinates 2}
There are $\co^\flat$-bases $e_0,f_0 \in M^\flat_0$ and $e_1,f_1\in M^\flat_1$ such that the operator $\Pi\in \Delta$ satisfies
\begin{equation}\label{pi good basis 2}
\Pi e_0=0,\qquad \Pi e_1 = 0,\quad \Pi f_0=e_1,\quad \Pi f_1 =e_0,
\end{equation}
and such that $\psi$ satisfies
\begin{equation}\label{psi good basis 2}
\psi(e_0)  =    e_1  ,\quad 
\psi(e_1) = t  e_0  ,\quad 
\psi(f_0) =    t   f_1 ,\quad
\psi(f_1) =   s  e_0 +  f_0  
\end{equation}
for  some scalars $s , t \in \co^\flat$ with $\ord(t)=1/p$.  
Moreover:
\begin{enumerate}
\item
For any such basis, $G_k$ is superspecial if and only if $\ord(s)>0$.  
\item
If $G_k$  is not superspecial such a basis can be found with $s=1$.
\end{enumerate}
\end{lemma}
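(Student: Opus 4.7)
The plan is to follow the template of the proof of Lemma~\ref{lem:coordinates 1}. First, using that $M^\flat$ is free of rank one over $\Delta \otimes_{\Z_p} \co^\flat$, choose bases $e_0, f_0 \in M_0^\flat$ and $e_1, f_1 \in M_1^\flat$ so that (\ref{pi good basis 2}) holds. Because $\psi$ is $\sigma^{-1}$-semilinear and the Frobenius $\sigma$ swaps the two embeddings $j_0, j_1$ of $\Z_{p^2}$ into $\co^\flat$, the operator $\psi$ interchanges $M_0^\flat$ and $M_1^\flat$. Combined with the relation $\psi \circ \Pi = \Pi \circ \psi$ (which holds because the $\Delta$-action on $M$ commutes with the Breuil-Kisin-Fargues structure), this forces the general shape
\[
\psi(e_0) = \alpha_0 e_1, \quad \psi(e_1) = \alpha_1 e_0, \quad \psi(f_0) = \gamma_0 e_1 + \alpha_1 f_1, \quad \psi(f_1) = \gamma_1 e_0 + \alpha_0 f_0
\]
for some coefficients $\alpha_i, \gamma_i \in \co^\flat$.

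Next I would use the Breuil-Kisin-Fargues dictionary of Theorem~\ref{thm:BKF}. Reducing the generators of $N^\flat$ modulo $\mathfrak{m}^\flat$ realizes $\Omega(G_k^\vee)$ inside $\sigma^*M^\flat/\mathfrak{m}^\flat\sigma^*M^\flat$, and computing the $\Pi$-action on these generators shows that $\Pi \Omega_1(G_k^\vee) \neq 0$ if and only if $\alpha_0 \in (\co^\flat)^\times$. Assuming this, comparison of $\sigma^* M^\flat / N^\flat$ with $\Lie(G)/p\Lie(G) \cong (\co^\flat/(\varpi))^{\oplus 2}$ yields, after elementary row and column operations using that $\alpha_0$ is a unit, an isomorphism with $(\co^\flat/(\alpha_1^p))^{\oplus 2}$, and hence $\ord(\alpha_1) = 1/p$. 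Now rescale: the admissible changes of basis preserving (\ref{pi good basis 2}) have the form
\[
\tilde e_0 = A e_0, \quad \tilde f_0 = B e_0 + C f_0, \quad \tilde e_1 = C e_1, \quad \tilde f_1 = D e_1 + A f_1
\]
with $A, C \in (\co^\flat)^\times$ and $B, D \in \co^\flat$. Choosing $C = \sigma^{-1}(A) \alpha_0$ normalizes $\psi(\tilde e_0) = \tilde e_1$; with $D = 0$ and $B$ chosen essentially as $-\alpha_0 \gamma_0^p / \sigma(\alpha_0)$, the $\tilde e_1$-coefficient of $\psi(\tilde f_0)$ vanishes. The resulting basis is of the form (\ref{psi good basis 2}) for some $s, t \in \co^\flat$ with $\ord(t) = 1/p$.

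For part~(1), combine Proposition~\ref{prop:oort} with part~(2) of Theorem~\ref{thm:BKF}: superspeciality of $G_k$ is equivalent to $V^2 = 0$ on $D/pD \cong M^\flat/\mathfrak{m}^\flat M^\flat$, which translates to $\psi^2(M^\flat) \subset \mathfrak{m}^\flat M^\flat$. A direct calculation on the normal basis gives
\[
\psi^2(e_0) = t e_0, \quad \psi^2(e_1) = t^{1/p} e_1, \quad \psi^2(f_0) = t^{1/p}(s e_0 + f_0), \quad \psi^2(f_1) = s^{1/p} e_1 + t f_1.
\]
Every coefficient involves $t$ (and so lies in $\mathfrak{m}^\flat$ automatically since $\ord(t) = 1/p > 0$) except for $s^{1/p}$, whence $V^2 \equiv 0$ if and only if $\ord(s) > 0$. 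For part~(2), assuming $s$ is a unit, a further rescaling $(\tilde e_0, \tilde e_1, \tilde f_0, \tilde f_1) = (A e_0, \sigma^{-1}(A) e_1, \sigma^{-1}(A) f_0, A f_1)$ with $A \in (\co^\flat)^\times$ preserves (\ref{psi good basis 2}) and sends $s \mapsto \sigma^{-1}(A) s / A$. Solving $\sigma^{-1}(A) s / A = 1$, equivalently $A^{p-1} = s^p$, is possible in $(\co^\flat)^\times$ by Hensel's lemma: the residue field $k$ is algebraically closed, and $p-1$ is invertible in $\co^\flat$ since $\co^\flat$ has characteristic $p$.

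The main obstacle is the middle step: extracting $\ord(\alpha_1) = 1/p$ requires careful tracking of the decomposition of $\sigma^* M^\flat$ under the swapped embeddings (so that the $j_i$-part of the cotangent space corresponds to the $j_{1-i}$-part of $M^\flat$), together with identifying which generators of $N^\flat$ become redundant after inverting $\alpha_0$, so that the cokernel $\sigma^*M^\flat/N^\flat$ is manifestly $(\co^\flat/(\alpha_1^p))^{\oplus 2}$.
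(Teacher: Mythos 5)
Your proposal is correct and follows essentially the same route as the paper: choose a $\Pi$-adapted basis of the rank-one $\Delta\otimes\co^\flat$-module $M^\flat$, use the identification of $\Omega_1(G_k^\vee)$ inside $\sigma^*M^\flat/\mathfrak{m}^\flat\sigma^*M^\flat$ to see that the hypothesis $\Pi\Omega_1(G_k^\vee)\neq 0$ forces the relevant coefficient to be a unit, normalize by rescaling and adding a multiple of $e_0$ to $f_0$, read off superspeciality from $\psi^2$ modulo $\mathfrak{m}^\flat$ via Proposition \ref{prop:oort}, and finally rescale once more (solving $A^{p-1}=s^p$) to make $s=1$. The only differences are that you make explicit the steps the paper leaves as routine (the admissible change-of-basis matrices, the computation of $\psi^2$, and the Hensel/algebraic-closedness argument for extracting the root), all of which check out.
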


\begin{proof}
Exactly as in the proof of Lemma \ref{lem:coordinates 1},  we may choose a basis such that (\ref{pi good basis 2}) holds, and such that 
\[
\psi(e_0)  = t_0  e_1  ,\quad 
\psi(e_1) = t_1 e_0  ,\quad 
\psi(f_0) =   u_1e_1 + t_1  f_1 ,\quad
\psi(f_1) =   u_0e_0 + t_0 f_0  
\]
for some $u_0,u_1,t_0,t_1 \in \co^\flat$ with $\ord(t_0)+\ord(t_1) = 1/p$. 

The $\Delta$-module $\Omega(G_k^\vee)$ is identified with the image of 
\[
N^\flat \to (\sigma^*M^\flat)  / \mathfrak{m}^\flat (\sigma^*M^\flat),
\]
and this identifies $\Omega_1(G_k^\vee)$ with the (one-dimensional) $k$-span of the vectors
\[
1\otimes \psi(e_1) =  t_1^p \otimes  e_0 ,\quad  1\otimes \psi(f_1) =  u_0^p \otimes e_0 + t_0^p \otimes  f_0
\]
in $(\sigma^*M_0^\flat)  / \mathfrak{m}^\flat (\sigma^*M^\flat)$. 
The assumption that $\Pi$ does not annihilate $\Omega_1(G_k^\vee)$ implies that $\ord(t_0)=0$, which  allows us to rescale our basis vectors to make $t_0=1$, and then add a multiple of $e_0$ to $f_0$ to make $u_1=0$.  Setting $t=t_1$ and $s=u_0$, the relations (\ref{psi good basis 2}) now hold.

It follows from  Proposition \ref{prop:oort} and Theorem \ref{thm:BKF}  that 
\begin{align*}
G_k \mbox{ is superspecial} &\iff V^2(D/pD) =0  \\
& \iff \psi^2 (M ^\flat /\mathfrak{m}^\flat M^\flat ) =0\\
&  \iff \ord(s) >0.
\end{align*}
 Finally, if $\ord(s)=0$ it is an easy exercise in linear algebra to see that the given basis elements can be rescaled to make $s=1$.
\end{proof}

As in \S \ref{ss:first calculation},  our fixed generator $\lambda\in T_p(G)$ determines an element 
\[
 a_0 e_0+ a_1 e_1+b_0 f_0 + b_1 f_1 \in M^\flat,
\]
where the coefficients $a_0,a_1,b_0,b_1\in \co^\flat$ satisfy
\begin{align}\label{tropical relations 2}
a_0^p & =  a_1 t^p +b_1 s^p \\
a_1^p &= a_0    \nonumber   \\
b_0^p &= b_1    \nonumber  \\
b_1^p&= b_0 t^p.  \nonumber 
\end{align}

As in \S \ref{ss:first calculation}, we may  identify
\[
\Omega(G^\vee) / p \Omega(G^\vee)=      N / ( pN +  \xi \sigma^*M )    = N^\flat /\varpi \sigma^*M^\flat 
\]
with the direct summand  of $\sigma^* M^\flat / \varpi \sigma^* M^\flat$  generated by the reductions of 
\begin{align*}
1\otimes \psi(e_0)  & =  1\otimes e_1 \in \sigma^*M^\flat \\
1\otimes \psi(f_1)  & =  s^p\otimes e_0 +1 \otimes f_0  \in \sigma^*M^\flat .
\end{align*}
If we use this basis to identify
\[
\Omega(G^\vee) / p \Omega(G^\vee)     =      N^\flat /\varpi \sigma^*M^\flat      \iso     \co^\flat/(\varpi) \oplus \co^\flat/(\varpi)
\]
then, using Theorem \ref{thm:BKF},  the partial Hodge-Tate morphisms
\begin{align*}
T_p(G)/p T_p(G) &  \map{\HT_0}  \Omega_0(G^\vee)  / p \Omega_0(G^\vee) \iso \co^\flat / (\varpi)  \\
T_p(G)/p T_p(G) &  \map{\HT_1}  \Omega_1(G^\vee)  / p \Omega_1(G^\vee) \iso \co^\flat / (\varpi)  
\end{align*}
satisfy
\begin{align}\label{HT swindle 2}
\HT_0(\lambda)   = a_0  &\qquad  \HT_0( \Pi \lambda)   = b_1  \\ 
\HT_1(\lambda)   = b_1   &\qquad  \HT_1(\Pi \lambda)   = 0  . \nonumber
\end{align}

\begin{lemma}\label{lem:newton 2}
We have
\[
\ord(b_0) = \frac{1}{p^2-1},\qquad \ord(b_1) = \frac{p}{p^2-1}.
\]
Moreover, 
\[
\ord(a_0) \ge  \frac{1}{p^2-1} ,\qquad \ord(a_1) \ge  \frac{1}{p(p^2-1)},
\]
and $G_k$ is superspecial if and only if one (equivalently, both) of these inequalities is strict.
\end{lemma}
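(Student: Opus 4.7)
My proposal proceeds in two independent parts, first handling the $b_i$ and then the $a_i$.

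\textbf{Part 1: computing $\ord(b_0), \ord(b_1)$.} The relations $b_0^p = b_1$ and $b_1^p = b_0 t^p$ from (\ref{tropical relations 2}) combine to give $b_0^{p^2} = b_0 t^p$, that is $b_0^{p^2-1} = t^p$ provided $b_0 \neq 0$. To rule out $b_0 = 0$, I would note that $\Pi\lambda$ has nonzero image in $T_p(G)/pT_p(G) \subset M^\flat$, and that image equals $b_0 e_1 + b_1 e_0$; since $\co^\flat$ has characteristic $p$, the relations $b_0^p = b_1$ and $b_1^p = b_0 t^p$ force $b_0$ and $b_1$ to be simultaneously zero or simultaneously nonzero, so both are nonzero. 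The valuations $\ord(b_0) = 1/(p^2-1)$ and $\ord(b_1) = p/(p^2-1)$ then follow from $\ord(t) = 1/p$.

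\textbf{Part 2: bounding $\ord(a_0), \ord(a_1)$.} From $a_1^p = a_0$ one has $\ord(a_0) = p \cdot \ord(a_1)$, so it suffices to show $\ord(a_1) \ge 1/(p(p^2-1))$, with equality iff $\ord(s) = 0$. Substituting $a_0 = a_1^p$ into the first relation of (\ref{tropical relations 2}) gives
\[
a_1^{p^2} = a_1 t^p + b_1 s^p.
\]
I would do a Newton polygon analysis on this identity, using $\ord(t^p) = 1$ and $\ord(b_1 s^p) = p/(p^2-1) + p \cdot \ord(s)$. Setting $v = \ord(a_1)$, the three exponents to compare are $p^2 v$, $v + 1$, and $p/(p^2-1) + p\,\ord(s)$. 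If $v < 1/(p(p^2-1))$ then $p^2 v < p/(p^2-1)$, while both $v+1$ and $p/(p^2-1) + p\,\ord(s)$ are at least $p/(p^2-1)$, so the right side has strictly larger valuation than the left, a contradiction. This proves $\ord(a_1) \ge 1/(p(p^2-1))$, and hence $\ord(a_0) \ge 1/(p^2-1)$.

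\textbf{Part 3: strictness.} Lemma \ref{lem:coordinates 2} identifies superspeciality with $\ord(s) > 0$, so I need to show the inequality $v \ge 1/(p(p^2-1))$ is strict exactly in that case. If $\ord(s) = 0$, then $\ord(b_1 s^p) = p/(p^2-1)$ while $v + 1 > 1 > p/(p^2-1)$, so the only way the equation can balance is $p^2 v = p/(p^2-1)$, giving $v = 1/(p(p^2-1))$ exactly. Conversely, if $\ord(s) > 0$ and we had $v = 1/(p(p^2-1))$, then $\ord(a_1^{p^2}) = p/(p^2-1)$ while both $\ord(a_1 t^p) = v+1$ and $\ord(b_1 s^p) = p/(p^2-1) + p\,\ord(s)$ are strictly greater than $p/(p^2-1)$, again contradicting the equation. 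This forces strict inequality precisely when $G_k$ is superspecial.

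The main obstacle is simply keeping the Newton polygon bookkeeping honest across the three terms, since one must verify that the elementary inequalities $1 > p/(p^2-1)$ and $v + 1 > p^2 v$ (for $v < 1/(p^2-1)$) are what rule out the various degenerate configurations; once these are set out carefully the argument is essentially mechanical.
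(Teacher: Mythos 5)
Your proposal is correct and follows essentially the same route as the paper: both establish $b_0,b_1\neq 0$ from the nonvanishing of the image of $\Pi\lambda$ in $M^\flat$, deduce $b_0^{p^2-1}=t^p$, and then analyze the Newton polygon of the degree-$p^2$ equation satisfied by the $a$-coefficient (the paper uses $a_0^{p^2}-a_0t^{p^2}-b_1^ps^{p^2}=0$, which is just the Frobenius twist of your equation for $a_1$), with strictness governed by $\ord(s)$ via Lemma \ref{lem:coordinates 2}. No gaps.
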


\begin{proof}
Exactly as in the proof of Lemma \ref{lem:bord}, both $b_0$ and $b_1$ are nonzero.  The relations (\ref{tropical relations 2}) therefore imply that 
\[
b_0^{p^2-1} = t^p,
\]
from which the stated formulas for $\ord(b_0)$ and $\ord(b_1)=\ord(b_0^p)$ are clear.

The relations (\ref{tropical relations 2}) imply that $a_0$ is a root of $x^{p^2}-x t^{p^2}-b_1^ps^{p^2}$, and by examination of the  Newton polygon we see that
\[
\ord(a_0) \ge \frac{1}{p^2-1}
\]
with strict inequality if and only if $\ord(s)>0$.  Combining this with $a_1^p =a_0$ completes the proof.
\end{proof}

\begin{lemma}\label{lem:congruence}
If $G_k$ is not superspecial then 
\[
\varpi \left( a_0 / b_1  \right)^{p+1} \in (\co^\flat)^\times \quad \mbox{and} \quad  \varpi s^{p+1} /  t^p  \in (\co^\flat)^\times,
\]
and these units have  the same reduction to $k^\times$.
\end{lemma}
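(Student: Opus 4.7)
My plan is to first verify that both expressions lie in $(\co^\flat)^\times$ by a routine valuation check, then reduce the equality of their reductions to a single key congruence derived from (\ref{tropical relations 2}).

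Under the non-superspecial hypothesis, Lemma \ref{lem:newton 2} gives the equalities $\ord(a_0) = 1/(p^2-1)$ and $\ord(b_1) = p/(p^2-1)$, Lemma \ref{lem:coordinates 2}(1) gives $\ord(s) = 0$, and one has $\ord(t) = 1/p$ together with $\ord(\varpi) = 1$. Assembling these shows that $\varpi(a_0/b_1)^{p+1}$ and $\varpi s^{p+1}/t^p$ both have valuation zero, hence are units in $\co^\flat$. The real content is to show that their ratio $\alpha := a_0^{p+1} t^p/(b_1^{p+1} s^{p+1})$ reduces to $1 \in k^\times$.

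The key intermediate assertion I will aim to establish is that $a_0/(b_0 s)$ is a unit with reduction $1$. To prove it, I will use perfectness of $\co^\flat$ to rewrite the second line of (\ref{tropical relations 2}) as $a_1 = a_0^{1/p}$, turning the first line into $a_0^p = a_0^{1/p} t^p + b_1 s^p$. A valuation comparison will show that $\ord(a_0^{1/p} t^p) = 1/(p(p^2-1)) + 1$ strictly exceeds $\ord(b_1 s^p) = p/(p^2-1)$, the gap being $(p-1)/p > 0$; consequently $a_0^p/(b_1 s^p)$ is a unit reducing to $1$. Taking $p$-th roots in the perfect ring $\co^\flat$, together with $b_0 = b_1^{1/p}$ (from $b_0^p = b_1$), then yields $a_0/(b_0 s) \equiv 1 \pmod{\mathfrak{m}^\flat}$.

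Granted this congruence, $(a_0/(b_0 s))^{p+1}$ reduces to $1$ as well, so $\alpha$ has the same reduction as
\[
\frac{(b_0 s)^{p+1} t^p}{b_1^{p+1} s^{p+1}} = \frac{b_0^{p+1} t^p}{b_1^{p+1}} = \frac{t^p}{b_0^{p^2-1}},
\]
where the last step uses $b_1 = b_0^p$. The final two relations in (\ref{tropical relations 2}) combine to give $b_0^{p^2-1} = t^p$, so $\alpha$ reduces to $1$, as desired. The main obstacle I anticipate is the valuation inequality in paragraph three: it uses both $\ord(a_0) = 1/(p^2-1)$ and $\ord(s) = 0$ in an essential way, which is precisely where the non-superspecial hypothesis enters. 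Once this is in hand, the remainder is elementary bookkeeping with the four relations.
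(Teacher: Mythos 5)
Your proof is correct and follows essentially the same route as the paper: both arguments come down to the relation $a_0^p = a_1 t^p + b_1 s^p$ together with $t^p = b_0^{p^2-1}$ and the valuations from Lemma \ref{lem:newton 2}, discarding the term $a_1 t^p$ of strictly larger valuation to get $a_0^p \equiv b_1 s^p$ up to a unit reducing to $1$. The only difference is cosmetic bookkeeping — you extract $p$-th roots via perfectness of $\co^\flat$ and $k$ where the paper normalizes with fractional powers of $\varpi$ and raises to the power $(p+1)/p$, which amounts to the same thing.
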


\begin{proof}
We have already noted that (\ref{tropical relations 2}) implies  $t^p=b_0^{p^2-1}$,  from which one easily deduces the equality
\[
\left( \frac{a_1}{b_0} \right) ^{p^2}  =   \frac{a_1}{b_0} + \frac{s^p}{  b_0^{p(p-1)}   } 
\]
in the fraction field of $\co^\flat$.
It  follows from this and Lemma \ref{lem:newton 2} that
\[
 \varpi^{ \frac{p}{p+1} } \left( \frac{a_1}{b_0} \right)^{p^2}    \quad
  \mbox{and } \quad  \left(  \frac{\varpi^{ \frac{1}{p+1} } s}{  b_0^{p-1}   }  \right)^p
\]
are units in $\co^\flat$ with the same reduction to $k^\times$, hence the same is true after raising both to the power  $(p+1)/p$.
The lemma follows easily from this and the relations (\ref{tropical relations 2}).
\end{proof}

\begin{proposition}\label{prop:second case} 
If we assume, as above, that $\Pi \Omega_1(G_k^\vee) \neq 0$ then 
\begin{equation}\label{north pole}
  \frac{p}{p+1} \le  \ord( \tau_1)  
\end{equation}
with strict inequality if and only if $G_k$ is superspecial.  Moreover, if equality holds then
\[
 - \frac{p}{  \tau_0^{p+1}}  \in \co^\times  \quad
  \mbox{and } \quad     
   \frac{ \varpi s^{p+1} }{  t^p }  \in (\co^\flat)^\times 
\]
 have the same reduction to $k^\times$.
\end{proposition}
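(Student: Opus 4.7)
My plan is to fix a basis as in Lemma~\ref{lem:coordinates 2}, producing coefficients $a_i, b_i \in \co^\flat$ for $\lambda \in M^\flat$ that satisfy the relations (\ref{tropical relations 2}). By Proposition~\ref{prop:period product}, $\ord(\tau_0) + \ord(\tau_1) = 1$, so the bound (\ref{north pole}) is equivalent to the upper bound $\ord(\tau_0) \le 1/(p+1)$. I will establish this upper bound using the mod-$p$ identities (\ref{HT swindle 2}) and Lemma~\ref{lem:newton 2}, then refine the argument to obtain the congruence when equality holds.

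For the inequality: via the isomorphism $\co/(p) \cong \co^\flat/(\varpi)$, the formulas (\ref{HT swindle 2}) read $\HT_0(\lambda) \equiv a_0^\sharp \pmod{p}$ and $\HT_0(\Pi \lambda) \equiv b_1^\sharp \pmod{p}$ in $\co$. Lemma~\ref{lem:newton 2} gives $\ord(b_1) = p/(p^2-1)$ and $\ord(a_0) \ge 1/(p^2-1)$, with strict inequality in the latter iff $G_k$ is superspecial; both valuations are strictly less than $1$ for $p \ge 2$. Hence $\ord(\HT_0(\Pi\lambda)) = p/(p^2-1)$, while $\ord(\HT_0(\lambda)) \ge 1/(p^2-1)$ with the same superspecial criterion for strict inequality. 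Subtracting yields $\ord(\tau_0) \le 1/(p+1)$, strict iff $G_k$ is superspecial, which gives (\ref{north pole}) and its equality criterion via $\tau_0\tau_1=p$.

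For the congruence, assume $\ord(\tau_1) = p/(p+1)$, so $G_k$ is not superspecial and $\ord(a_0) = 1/(p^2-1)$. Write $\HT_0(\lambda) = a_0^\sharp + p y_0$ and $\HT_0(\Pi\lambda) = b_1^\sharp + p y_1$ with $y_0, y_1 \in \co$. The error ratios $p y_0/a_0^\sharp$ and $p y_1/b_1^\sharp$ have valuations at least $1 - 1/(p^2-1)$ and $1 - p/(p^2-1)$, respectively, both strictly positive for $p\ge 2$. Therefore $\HT_0(\lambda)/a_0^\sharp$ and $\HT_0(\Pi\lambda)/b_1^\sharp$ reduce to $1$ in $k^\times$, so $\tau_0$ and $b_1^\sharp/a_0^\sharp$ have the same image in $k^\times$. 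Consequently, the unit $-p/\tau_0^{p+1}$ has the same reduction to $k^\times$ as
\[
-p\,(a_0^\sharp/b_1^\sharp)^{p+1} \;=\; \frac{-p}{\varpi^\sharp}\cdot \bigl(\varpi\,(a_0/b_1)^{p+1}\bigr)^\sharp,
\]
using the multiplicativity of sharp.

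The first factor on the right reduces to $1$ in $k^\times$ by Lemma~\ref{lem:sharp reduction}. For the second, $\varpi(a_0/b_1)^{p+1}$ is a unit in $\co^\flat$ by Lemma~\ref{lem:congruence}, and its sharp has the same reduction to $k^\times$ (because the canonical isomorphism $\co^\flat/(\varpi) \cong \co/(p)$ is compatible with further reduction to $k$). Lemma~\ref{lem:congruence} then identifies this common reduction with that of $\varpi s^{p+1}/t^p$, completing the proof. The main technical obstacle is the refinement step above: the mod-$p$ information from (\ref{HT swindle 2}) pins down $\tau_0$ modulo $\mathfrak{m}$ only because the numerical constraints $\ord(a_0), \ord(b_1) < 1$ force the correction terms $py_0/a_0^\sharp$ and $py_1/b_1^\sharp$ to lie in $\mathfrak{m}$; outside this non-superspecial regime the same strategy would only produce a strict inequality.
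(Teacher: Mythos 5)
Your proposal is correct and follows essentially the same route as the paper: the inequality and its equality criterion come from (\ref{HT swindle 2}) together with Lemma \ref{lem:newton 2}, and the congruence from combining Lemmas \ref{lem:congruence} and \ref{lem:sharp reduction} with the compatibility of $x\mapsto x^\sharp$ with reduction. The only cosmetic difference is that the paper normalizes by an auxiliary root $\alpha$ with $\alpha^{p^2-1}=\varpi$ before comparing reductions, whereas you compare the unit ratios directly; both are valid.
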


\begin{proof}
Using (\ref{HT swindle 2}) and Lemma \ref{lem:newton 2}, we find that 
\[
\ord(\HT_0(\Pi \lambda) ) = \frac{p}{p^2-1},
\]
and that 
\[
\ord( \HT_0(\lambda) ) \ge  \frac{1}{p^2-1}
\]
with strict inequality if and only if $G_k$ is superspecial.  This implies that 
\[
\ord( \tau_0) = \ord(\HT_0(\Pi \lambda) )  - \ord( \HT_0(\lambda) )  \le \frac{1}{p+1}
\]
with strict inequality if and only if $G_k$ is superspecial.  
The inequality (\ref{north pole}) follows from this and the relation  $\tau_0\tau_1=p$
of Proposition \ref{prop:period product},  with strict inequality if and only if $G_k$ is superspecial.

Suppose that equality holds in (\ref{north pole}), so that $G_k$ is not superspecial.  
Choose an $\alpha \in \co^\flat$ satisfying $\alpha^{p^2-1} = \varpi$.  The construction of \S \ref{ss:tilt} determines an element 
$\alpha^\sharp \in \co$ whose image in 
$
\co / (p) \iso \co^\flat / (\varpi)
$
agrees with $\alpha$.

Combining the relations (\ref{HT swindle 2}) with Lemma \ref{lem:newton 2}  shows that 
\[
 \frac{ \mathrm{HT}_0(\Pi \lambda) }{ (\alpha^\sharp)^p }    \in \co^\times
\quad \mbox{and}\quad    \frac{ b_1 }{  \alpha^p }   \in (\co^\flat)^\times
\] 
have the same reduction to $k^\times$, as do 
\[
  \frac{ \mathrm{HT}_0( \lambda)  }{\alpha^\sharp}  \in \co^\times
\quad \mbox{and}\quad     \frac{ a_0 }{ \alpha}  \in (\co^\flat)^\times.
\] 
It  follows that 
\[
 \frac{ \tau_0}{ (\alpha^\sharp)^{  p-1 } }    \in \co^\times 
 \quad \mbox{and}\quad 
  \frac{ b_1   } {  a_0 \alpha^{p-1} }      \in   (  \co^\flat  )^\times
\]
have the same reduction to $k^\times$. 
Raising both to the power $p+1$ and applying Lemma \ref{lem:congruence} proves that
\[
\frac{\varpi^\sharp}{  \tau_0^{p+1}}  \in \co^\times  \quad
  \mbox{and } \quad     
   \frac{ \varpi s^{p+1} }{  t^p }  \in (\co^\flat)^\times 
\]
 have the same reduction to $k^\times$. Now apply  Lemma \ref{lem:sharp reduction}.
 \end{proof}


\section{The main results}
\label{s:main}


We now formulate and prove our main results on the Ekedahl-Oort stratification of the Hodge-Tate period domain $X$ defined by (\ref{drinfeld HT}).
Throughout \S \ref{s:main} we assume  that the conclusions of Theorem \ref{thm:BKF} hold. 
For example, it is enough to assume that $p>2$.


\subsection{The setup}
\label{ss:main setup}


Let $T$ be a free $\Delta$-module of rank one, and fix a generator $\lambda \in T$.  Use the embeddings (\ref{embeddings}) to decompose
\[
T\otimes_{\Z_p} C = T_{C,0} \oplus T_{C,1}
\]
as a direct sum of  $2$-dimensional $C$-subspaces, in such a way that the action of $\Z_{p^2} \subset \Delta$ on the summands is through $j_0$ and  $j_1$, respectively.
Using the projection maps to the two factors, we obtain injective $\Z_p$-linear maps
\[
q_0: T \to T_{C,0} ,\qquad  q_1:T \to T_{C,1} .
\]

To each  $\tau \in C\cup\{ \infty\}$ we associate the $\Delta$-stable plane  
\[
W_\tau  \subset T\otimes_{\Z_p} C
\]
  spanned by the two vectors
\[
 \tau  q_0(  \lambda)  -   q_0(\Pi \lambda)   \in  T_{C,0} ,\qquad 
 p  q_1(  \lambda)  -   \tau q_1(\Pi \lambda)   \in  T_{C,1}.
 \]
The construction   $\tau\mapsto W_\tau$ establishes a bijection 
\[
C\cup\{ \infty\} \iso X(C).
\]

\begin{remark}
It is not hard to see that  the above bijection $\mathbb{P}^1(C) \iso X(C)$ arises from an isomorphism of schemes over $\Q_{p^2}$.
The isomorphism cannot descend to $\Q_p$, for the simple reason that $X(\Q_p)=\emptyset$.
\end{remark}

For the rest of \S \ref{ss:main setup} and \S \ref{ss:main theorems} we hold 
  $\tau \in C\cup \{\infty\}$ fixed, and let $G$ be the  $p$-divisible group over $\co$ determined by the pair $(T,W_\tau)$.
Thus $G$ comes equipped with an action of $\Delta$, and   $\Delta$-linear identifications
\[
\xymatrix{
{      T_p(G)      }  \ar[rr]^{\HT} \ar@{=}[d]    &   &  {   \Omega(G^\vee) \otimes_\co C    }  \ar@{=}[d]  \\
{T} \ar[rr]   &    &  {  (T\otimes_{\Z_p} C) / W_\tau   .}  
}
\]
In the notation of \S \ref{ss:periods}, the Hodge-Tate periods of $G$ are
\begin{equation}\label{HT coord}
\tau_0=\tau \quad \mbox{and}  \quad \tau_1 = p/\tau.
\end{equation}


\subsection{Computing the reduction}
\label{ss:main theorems}


Let $G_k$ be the reduction of $G$ to the residue field $k=\co/\mathfrak{m}$, and let $(D,F,V)$ be its  covariant Dieudonn\'e module.
We will show how to compute the isomorphism class of $G_k[p]$   from the Hodge-Tate periods (\ref{HT coord}).

Let $\mathbb{D}=\Delta\otimes_{\Z_p} k$ with its natural action of $\Delta$ by left multiplication.
The embeddings (\ref{embeddings}) induce  a decomposition 
\[
\mathbb{D} = \mathbb{D}_0 \oplus \mathbb{D}_1
\]   
in which $\Z_{p^2}\subset \Delta$ acts on $\mathbb{D}_i$ through the composition of $j_i : \Z_{p^2} \to \co$ with the reduction map $\co\to k$.
Choose $k$-bases
\[
e_0 , f_0 \in \mathbb{D}_0 ,\qquad e_1,f_1 \in \mathbb{D}_1
\]
in such a way that  $\Pi \in \Delta$ acts as
\begin{equation}\label{pi relations}
\Pi e_0=0,\quad \Pi e_1 = 0,\quad \Pi f_0=e_1,\quad \Pi f_1 =e_0.
\end{equation}

\begin{theorem}\label{thm:stupidspecial stratum}
The inequalities 
\begin{equation}\label{equator}
\frac{1}{p+1} < \ord(\tau) < \frac{p}{p+1}
\end{equation}
hold if and only if  $\Pi \Omega(G_k^\vee)=0$.  When these conditions hold, 
there is a $\Delta$-linear isomorphism $D/pD\iso \mathbb{D}$ under which 
\[
 \begin{array}{llll}
{ F e_0 =  0,}  &  { Ff_0 =    e_1, }  & { Fe_1= 0, }    & {  Ff_1=  e_0,  }   \\
{ Ve_0 = 0, } & {  Vf_0 = e_1,}  & {  Ve_1=0,}  & { V f_1 = e_0 .}
\end{array}
 \]
\end{theorem}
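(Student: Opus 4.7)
The plan divides into two parts. First, for the equivalence, I would combine Propositions \ref{prop:first case} and \ref{prop:second case} through a trichotomy. Since $\Pi$ swaps the two rank-one summands $\Omega_0(G_k^\vee)$ and $\Omega_1(G_k^\vee)$, the condition $\Pi \Omega(G_k^\vee) = 0$ is equivalent to both $\Pi$-partial actions vanishing; otherwise exactly one is nontrivial. Using $\tau = \tau_0$ from (\ref{HT coord}) and $\tau_0 \tau_1 = p$ (Proposition \ref{prop:period product}), Proposition \ref{prop:first case} places $\ord(\tau)$ strictly in $(1/(p+1), p/(p+1))$ in the first case, while Proposition \ref{prop:second case} (applied as stated or with the indices swapped) places $\ord(\tau)$ in $[p/(p+1), +\infty]$ or $(-\infty, 1/(p+1)]$ in the remaining two cases. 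These three ranges are disjoint and exhaust $\R \cup \{\pm\infty\}$, yielding the biconditional.

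For the Dieudonn\'e module, assume $\Pi \Omega(G_k^\vee) = 0$ and invoke Lemma \ref{lem:coordinates 1} to fix an $\co^\flat$-basis $e_0, f_0, e_1, f_1$ of $M^\flat$ in which $\Pi$ and $\psi$ take the stated normal forms, with $t_0, t_1 \in \co^\flat$ of positive valuation summing to $1/p$. By Theorem \ref{thm:BKF}(2), $D \iso W(k) \otimes_{\sigma, \Ainf} M$, so $D/pD \iso k \otimes_{\sigma, \co^\flat} M^\flat$ with inherited $k$-basis $\bar e_i, \bar f_i$. I would then compute $V$ and $F$ directly. For $V$: the identity $V(1 \otimes x) = 1 \otimes \psi(x)$ combined with the explicit formulas for $\psi$ gives, after noting that the coefficients $t_i^p$ have positive valuation and vanish in $k$, precisely the claimed $V$-relations. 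For $F$: use $\phi \circ \psi = \varpi$ on $M^\flat$ to solve for $\phi$ on each basis element, obtaining $\phi(f_0) = (\varpi/t_0^p) f_1 - (\varpi/(t_0 t_1)^p) e_1$ and similar formulas. The coefficients $\varpi/t_i^p$ have positive valuation and vanish, while $\varpi/(t_0 t_1)^p$ is a unit since $(t_0 t_1)^p$ and $\varpi$ generate the same ideal (from the proof of Lemma \ref{lem:coordinates 1}). The outcome is $F\bar e_i = 0$ and $F\bar f_i = c_{1-i} \bar e_{1-i}$ for specific units $c_i \in k^\times$.

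Two subtleties remain. First, the Frobenius-twisted tensor product permutes the $\Z_{p^2}$-eigenspaces, since $j_0(x)^p \equiv j_1(x)$ modulo $\mathfrak{m}^\flat$; but the claimed relations in $\mathbb{D}$ are symmetric under the swap $0 \leftrightarrow 1$, so a mere relabeling produces the $\Delta$-linear match. Second, and the main technical obstacle, the units $c_i$ must be normalized to $1$. I expect this to follow by rescaling the basis within the remaining freedom of Lemma \ref{lem:coordinates 1}, combined with Lemma \ref{lem:sharp reduction} (which pins down $\varpi^\sharp/p \equiv -1$ in $k^\times$ and hence the sign of the unit $(t_0 t_1)^p/\varpi$ modulo $\mathfrak{m}^\flat$). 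Tracking this rescaling against the $\sigma$- and $\sigma^{-1}$-semilinearity of $F$ and $V$ is the most delicate part of the argument, as the admissible rescaling parameters are constrained to lie in $\mu_{p-1}(\co^\flat)$.
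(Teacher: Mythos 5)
Your reduction of the biconditional to Propositions \ref{prop:first case} and \ref{prop:second case} via the trichotomy on $\Pi \Omega_0(G_k^\vee)$, $\Pi \Omega_1(G_k^\vee)$ is exactly the paper's argument, and your computation of $\phi$ from $\phi\circ\psi=\varpi$ is also correct: on $M^\flat/\mathfrak{m}^\flat M^\flat$ one finds $\phi(f_0)=u e_1$ and $\phi(f_1)=u e_0$ with $u$ the reduction of $-\varpi/(t_0t_1)^p$, hence $Ff_0=u^p e_1$, $Ff_1=u^p e_0$ on $D/pD$.

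The gap is in your final step. The normalization $u=1$ cannot be obtained by rescaling within the freedom of Lemma \ref{lem:coordinates 1}. Any change of basis preserving (\ref{pi good basis 1}) and the shape of $\psi$ has the form $f_0\mapsto c_0 f_0+d_0 e_0$, $f_1\mapsto c_1 f_1+d_1 e_1$ (hence $e_0\mapsto c_1 e_0$, $e_1\mapsto c_0 e_1$), and compatibility with $\psi(f_0)=e_1+t_1 f_1$, $\psi(f_1)=e_0+t_0f_0$ forces $\overline{c_0},\overline{c_1}\in\F_p^\times$, as you note; but then $t_0t_1$ is multiplied by $(c_0c_1)^{1/p}/(c_0c_1)\equiv 1\pmod{\mathfrak{m}^\flat}$, so the class of $(t_0t_1)^p/\varpi$ in $k^\times$, and with it $u$, is invariant under every admissible change of basis. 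Lemma \ref{lem:sharp reduction} does not help either: it compares $\varpi^\sharp$ with $p$, not $(t_0t_1)^p$ with $\varpi$, and Lemma \ref{lem:coordinates 1} only pins down the ideal $((t_0t_1)^p)=(\varpi)$, not the unit. The identity $u=1$ is an extra constraint on the Breuil-Kisin-Fargues module that is invisible in $M^\flat$ alone, and the paper extracts it from the integral structure: decompose $D=D_0\oplus D_1$ over $W(k)$, lift $f_0,f_1$ arbitrarily to $\tilde f_i\in D_i$, set $\tilde e_0=\Pi\tilde f_1$ and $\tilde e_1=\Pi\tilde f_0$, use the commutation of $\Pi$ with $V$ to write $V\tilde f_1=a_0\tilde e_0+pb_0\tilde f_0$ and $V\tilde e_1=pb_0\tilde e_0+pa_0\tilde f_0$ with $a_0\in 1+pW(k)$, then apply $F$ and the relation $FV=p$ to get $\bigl(\sigma(a_0)^2-p\sigma(b_0)^2\bigr)F\tilde f_0=\sigma(a_0)\tilde e_1-p\sigma(b_0)\tilde f_1$, which reduces mod $p$ to $Ff_0=e_1$ on the nose. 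Some such appeal to the lattice $D$ over $W(k)$ (or to $M$ over $\Ainf$) is unavoidable; the mod-$p$ data does not determine $u$.
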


\begin{proof}
If  $\Pi\Omega(G_k^\vee) \neq 0$ then either  $\Pi \Omega_1(G_k^\vee)\neq 0$ or $\Pi \Omega_0(G_k^\vee) \neq 0$.
In the first case Proposition \ref{prop:second case} implies 
\[
\frac{p}{p+1} \le \ord(\tau_1) .
\]
In the second case the  same proof, with indices $0$ and $1$ interchanged throughout, shows that 
\[
\frac{p}{p+1} \le \ord(\tau_0).
\]
In either case, these bounds imply that   (\ref{equator}) fails.

Now assume that $\Pi \Omega(G_k^\vee)=0$.  We have already proved in Proposition \ref{prop:first case} that  (\ref{equator}) holds,
and so it only remains to prove that $D/pD$ admits an isomorphism to $\mathbb{D}$ with the prescribed properties.

Let $e_0 , f_0 \in M_0^\flat$ and $e_1,f_1\in M_1^\flat$ be the bases of Lemma \ref{lem:coordinates 1}.  
Using the formula  for $\psi : M^\flat \to M^\flat$ prescribed in that lemma, and the relation $\phi \circ \psi =  \varpi$, one can write down an explicit formula for $\phi$, and then see that the induced operators  on the reduction  $M^\flat / \mathfrak{m}^\flat M^\flat$ are given by 
 \[
 \begin{array}{llll}
{ \phi (e_0) =  0,}  &  { \phi (f_0) =  u  e_1, }  & { \phi (e_1)= 0, }    & {  \phi (f_1) = u e_0,  }   \\
{ \psi (e_0) = 0, } & {  \psi (f_0) = e_1,}  & {  \psi (e_1)=0,}  & { \psi (f_1) = e_0 ,}
\end{array}
 \]
where  $u^{-1} \in k^\times$ is the reduction of $-t_0^p t_1^p / \varpi \in (\co^\flat)^\times$.

The images of $e_0,f_0,e_1,f_1$ under the bijection 
\[
M^\flat/\mathfrak{m}^\flat M^\flat \map{x\mapsto 1\otimes x} \sigma^* ( M^\flat/\mathfrak{m}^\flat M^\flat)
\iso M_\crys / pM_\crys
\iso D/pD
\]
provided by Theorem \ref{thm:BKF}  form a $k$-basis of $D/pD$, denoted the same way, satisfying the relations (\ref{pi relations}) 
and 
\[
 \begin{array}{llll}
{ F e_0 =  0,}  &  { Ff_0 =  u^{p}  e_1, }  & { Fe_1= 0, }    & {  Ff_1= u^{p} e_0,  }   \\
{ Ve_0 = 0, } & {  Vf_0 = e_1,}  & {  Ve_1=0,}  & { V f_1 = e_0 .}
\end{array}
 \]

It remains to prove that $u=1$.  The two embeddings (\ref{embeddings}) reduce to morphisms $j_0,j_1 : \Z_{p^2} \to k$, which then admit unique lifts to 
\[
j_0,j_1  : \Z_{p^2}  \to W(k).
\]
  This allows us to decompose 
$
D = D_0 \oplus D_1
$
as $W$-modules, where $\Z_{p^2}\subset \Delta$ acts on the two summands via $j_0$ and $j_1$, respectively.
Choose arbitrary lifts 
\[
\tilde{f}_0  \in D_0 ,\qquad \tilde{f}_1 \in D_1
\]
of $f_0$ and $f_1$, and then define 
\[
\tilde{e}_0 = \Pi \tilde{f}_1 \in D_0  ,\qquad \tilde{e}_1 = \Pi \tilde{f}_0 \in D_1.
\]
Using the fact that $\Pi$ and $V$ commute, we see that 
\[
\begin{array}{ccc}
{ V\tilde{e}_0  = pb_1 \tilde{e}_1 + p a_1 \tilde{f}_1  } & \quad  & { V \tilde{f}_0  = a_1 \tilde{e}_1 + p b_1 \tilde{f}_1 }  \\
{ V\tilde{e}_1  = pb_0 \tilde{e}_0 + p a_0 \tilde{f}_0  }  & \quad & { V \tilde{f}_1  = a_0 \tilde{e}_0 + p b_0 \tilde{f}_0 }
\end{array}
\]
for scalars 
\[
a_0  , a_1 \in 1+pW(k) ,\qquad b_0,b_1 \in W(k).
\]

Denote again by $\sigma : W(k) \to W(k)$ the lift of the Frobenius on $k$. 
Applying $F$ to the expressions for $V\tilde{e}_1$ and $V\tilde{f}_1$ results in 
\[
p \tilde{e}_1 = \sigma(pb_0) F\tilde{e}_0 + \sigma(pa_0) F \tilde{f}_0,\qquad
p \tilde{f}_1 = \sigma(a_0) F\tilde{e}_0 + \sigma(pb_0) F \tilde{f}_0 ,
\]
from which one deduces
\[
\big(  \sigma(a_0)^2 - p \sigma(b_0)^2 \big) \cdot F\tilde{f}_0 = \sigma(a_0) \tilde{e}_1 - p \sigma(b_0) \tilde{f}_1.
\]
Reducing this  modulo $p$ proves that $F f_0 = e_1$, and hence $u=1$.
\end{proof}

\begin{theorem}\label{thm:polar stratum 1}
The inequality 
\begin{equation}\label{stray bound}
 \ord(\tau) \le \frac{1}{p+1}
\end{equation}
holds if and only if  $\Pi \Omega_1(G_k^\vee)\neq 0$.  Moreover:
\begin{enumerate}
\item
If strict  inequality holds in (\ref{stray bound}),  there is a $\Delta$-linear isomorphism $D/pD\iso \mathbb{D}$ under which 
\[
\begin{array}{llll}
{ F e_0 = e_1 , } & {  F f_0 = 0,}  &  { F e_1=0 , }  & {  F f_1 =  f_0, } \\
{ V e_0 = e_1 ,}  &  { V f_0 = 0 , } &  {  V e_1=0, }  & {  V f_1 =  f_0 .}
\end{array}
\]

\item
If equality holds in (\ref{stray bound}),  there is a $\Delta$-linear isomorphism $D/pD\iso \mathbb{D}$ under which 
\begin{equation}\label{lower family}
\begin{array}{llll}
{ F e_0 =  u^p e_1 , } &  { F f_0 =  -u^pe_1   , } &  { F e_1=0 , } &  {  F f_1 =  u^pf_0  , } \\
{ V e_0 = e_1 ,  } &  {  V f_0 = 0 , }  &   {  V e_1=0, }   &  { V f_1 = e_0+f_0, }
\end{array}
\end{equation}
where $u$ is the image of  $-p /  \tau_0^{p+1}=-p /  \tau^{p+1}$   under  $\co^\times \to k^\times$.
\end{enumerate}
\end{theorem}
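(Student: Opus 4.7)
The plan mirrors the structure of the proof of Theorem \ref{thm:stupidspecial stratum}: first establish the characterization of when $\ord(\tau)\le 1/(p+1)$ by elimination, then extract $D/pD$ by reducing the Breuil-Kisin-Fargues module modulo $\mathfrak{m}^\flat M^\flat$. The three cases $\Pi\Omega(G_k^\vee)=0$, $\Pi\Omega_0(G_k^\vee)\neq 0$, and $\Pi\Omega_1(G_k^\vee)\neq 0$ are exhaustive, and each constrains $\ord(\tau)$ to a distinct region: the first yields $\ord(\tau)\in(1/(p+1),p/(p+1))$ by Proposition \ref{prop:first case}; the second yields $\ord(\tau)\geq p/(p+1)$ by the symmetric analog of Proposition \ref{prop:second case} (swap $j_0\leftrightarrow j_1$); the third yields $\ord(\tau)\le 1/(p+1)$ by Proposition \ref{prop:second case} together with $\tau_0\tau_1=p$, with equality iff $G_k$ is not superspecial. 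Disjointness of the three regions gives the equivalence in the first assertion and the superspeciality dichotomy between cases (1) and (2).

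Now assume $\Pi\Omega_1(G_k^\vee)\neq 0$ and take the basis $e_0,f_0,e_1,f_1$ of $M^\flat$ from Lemma \ref{lem:coordinates 2}. The relation $\phi\circ\psi=\varpi$ uniquely forces
\begin{equation*}
\phi(e_0)=ce_1,\quad \phi(e_1)=\varpi e_0,\quad \phi(f_1)=cf_0,\quad \phi(f_0)=\varpi f_1-s^pce_1,
\end{equation*}
where $c=\varpi/t^p\in(\co^\flat)^\times$. By Theorem \ref{thm:BKF}, $D/pD\iso \sigma^\ast(M^\flat/\mathfrak{m}^\flat M^\flat)$, and writing $E_i=1\otimes e_i$, $F_i=1\otimes f_i$, the operators $F$ and $V$ on $D/pD$ are given by $F(a\otimes m)=\sigma(a)\otimes\phi(m)$ and $V(a\otimes m)=\sigma^{-1}(a)\otimes\psi(m)$, which I read off after reducing $\phi,\psi$ modulo $\mathfrak{m}^\flat$ and moving scalars through $\sigma$.

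In the strict inequality case $\bar s=0$ by Lemma \ref{lem:coordinates 2}(1), and $\bar t=\bar\varpi=0$, so only the pairings
\begin{equation*}
FE_0=\bar c^pE_1,\quad FF_1=\bar c^pF_0,\quad VE_0=E_1,\quad VF_1=F_0
\end{equation*}
are nonzero. Setting $e_0=\beta E_0$, $e_1=\beta^{1/p}E_1$, $f_0=\beta^{1/p}F_0$, $f_1=\beta F_1$ preserves the relations (\ref{pi relations}), automatically normalizes $V$ on every basis vector, and normalizes $F$ precisely when $\beta^{(p^2-1)/p}\bar c^p=1$; since $k$ is algebraically closed this admits a solution $\beta\in k^\times$, yielding the claimed normal form.

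In the equality case Lemma \ref{lem:coordinates 2}(2) lets us take $s=1$, and the analogous reduction produces
\begin{equation*}
FE_0=\bar c^pE_1,\quad FF_0=-\bar c^pE_1,\quad FF_1=\bar c^pF_0,\quad VE_0=E_1,\quad VF_1=E_0+F_0,
\end{equation*}
with the remaining $F$ and $V$ values zero; this matches (\ref{lower family}) under $e_i=E_i$, $f_i=F_i$ with $u=\bar c$. The last assertion of Proposition \ref{prop:second case} specialized to $s=1$ identifies the reduction of $c=\varpi/t^p$ in $k^\times$ with the reduction of $-p/\tau^{p+1}$, confirming $\bar c=u$ and completing the proof. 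I expect the main obstacle to be the rescaling step in the superspecial case, where a single scalar $\beta$ must simultaneously normalize $F$, $V$, and $\Pi$; what makes this work is that the $V$-condition and the $F$-condition reduce to powers of $\beta$ that are compatible via $\sigma$-semilinearity, collapsing into the single equation $\beta^{(p^2-1)/p}\bar c^p=1$.
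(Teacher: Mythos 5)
Your proof is correct and follows the paper's strategy almost exactly: the same elimination argument via Propositions \ref{prop:first case} and \ref{prop:second case} (and its $0\leftrightarrow 1$ mirror) for the equivalence, the same basis from Lemma \ref{lem:coordinates 2}, the same computation of $\phi$ from $\phi\circ\psi=\varpi$, and the same transfer to $D/pD\iso\sigma^*(M^\flat/\mathfrak{m}^\flat M^\flat)$; your formulas for $\phi$, $F$, and $V$, and your identification of $u=\bar c$ with the reduction of $-p/\tau^{p+1}$ in the equality case, all match the paper. The one genuine divergence is the normalization in the strict-inequality case: the paper shows that the unit $\bar c^p$ is actually equal to $1$ by lifting the basis to the integral Dieudonn\'e module $D=D_0\oplus D_1$ and exploiting $FV=p$ together with the $\Pi$-action (as in the proof of Theorem \ref{thm:stupidspecial stratum}), whereas you rescale by a $\beta\in k^\times$ solving $\beta^{(p^2-1)/p}\bar c^p=1$, which exists since $k$ is algebraically closed and which, as you check, is compatible with $\Pi$, $F$, and $V$ simultaneously. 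Your route is more elementary and suffices for the statement, which only asserts the existence of some $\Delta$-linear isomorphism; the paper's argument proves the marginally stronger fact that the natural basis already has coefficient $1$, but that extra precision is not used.
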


\begin{proof}
If  (\ref{stray bound}) holds then Theorem \ref{thm:stupidspecial stratum} implies that $\Pi \Omega(G_k^\vee)\neq 0$, and so 
either 
\[
\Pi \Omega_0(G_k^\vee) \neq 0 \quad \mbox{or} \quad \Pi \Omega_1(G_k^\vee)\neq 0.
\]
The first possibility cannot occur, as then the proof of Proposition \ref{prop:second case}, with the indices $0$ and $1$ reversed everywhere, would give the bound
\[
\frac{p}{p+1} \le \ord(\tau_0),
\]
contradicting (\ref{stray bound}).  
Conversely, if $\Pi \Omega_1(G_k^\vee) \neq 0$ then  (\ref{stray bound}) holds by Proposition \ref{prop:second case}.

Assume now that  (\ref{stray bound}) holds, and that $\Pi \Omega_1(G_k^\vee)\neq 0$.  Let 
 $e_0 , f_0 \in M_0^\flat$ and $e_1,f_1\in M_1^\flat$ be the bases of Lemma \ref{lem:coordinates 2}.
As in the proof of  Theorem \ref{thm:stupidspecial stratum}, the operator $\phi$ on $M^\flat$ can be computed from the formula for $\psi$ given in the lemma.
The induced operators on the reduction  $M^\flat / \mathfrak{m}^\flat  M^\flat$ are found to be 
 \[
 \begin{array}{llll}
{ \phi (e_0) = u e_1 ,}  &  { \phi (f_0) = -u v^p e_1 , }  & { \phi (e_1)= 0 , }    & {  \phi (f_1)= u f_0,  }   \\
{ \psi (e_0) = e_1, } & {  \psi (f_0) = 0 ,}  & {  \psi (e_1)=0 ,}  & { \psi (f_1) = v e_0 + f_0 ,}
\end{array}
 \]
 where $u\in k^\times$ is the reduction of $\varpi/t^p \in (\co^\flat)^\times$, and $v\in k$ is the reduction of $s\in \co^\flat$.
 By the final claim of  Lemma \ref{lem:coordinates 2}, we may further assume that 
 \[
 v= \begin{cases}
 0 & \mbox{if $G_k$ is superspecial} \\
 1 & \mbox{otherwise.}
 \end{cases}
 \]

Suppose  that strict inequality holds in (\ref{stray bound}).  Proposition \ref{prop:second case} tells us that $G_k$ is superspecial, and so $v=0$.
The images of $e_0,f_0,e_1,f_1$ under the bijection 
\[
M^\flat/\mathfrak{m}^\flat M^\flat \map{x\mapsto 1\otimes x} \sigma^* ( M^\flat/\mathfrak{m}^\flat M^\flat)
\iso M_\crys / p M_\crys
 \iso D/pD
\]
provided by Theorem \ref{thm:BKF}  form a $k$-basis of $D/pD$, denoted the same way, satisfying the relations (\ref{pi relations}) and 
 \[
 \begin{array}{llll}
{  Fe_0= u^p e_1 ,}  &  { F f_0 = 0 , }  & { Fe_1= 0 , }    & {  Ff_1= u^p f_0,  }   \\
{ Ve_0 = e_1, } & {  Vf_0 = 0 ,}  & {  Ve_1=0 ,}  & { Vf_1 =   f_0 . }
\end{array}
 \]
One can prove that $u=1$ by lifting the basis elements to $D$ and arguing exactly as in Theorem \ref{thm:stupidspecial stratum}.

 Suppose now that equality holds in (\ref{stray bound}).  
 Proposition \ref{prop:second case} implies  that $G_k$ is not superspecial,  so $v=1$, and that the reduction map $\co^\times \to k^\times$ sends 
$
-p /  \tau_0^{p+1}  \mapsto u .
$
 Arguing as in the previous paragraph, we obtain  a $k$-basis $e_0,f_0,e_1,f_1$ of $D/pD$ satisfying (\ref{pi relations}) and (\ref{lower family}), completing the proof.
\end{proof}

\begin{theorem}\label{thm:polar stratum 2}
The inequality
\begin{equation}\label{stray bound 2}
\frac{p}{p+1} \le \ord(\tau)
\end{equation}
holds if and only if $\Pi \Omega_0(G_k^\vee) \neq 0$. Moreover:
\begin{enumerate}
\item
If strict inequality holds in (\ref{stray bound 2}), there is a $\Delta$-linear isomorphism $D/pD\iso \mathbb{D}$ under which
\[
\begin{array}{llll}
{ F e_0 = 0 , }  &  {  F f_0 = f_1   ,}  &  {  F e_1= e_0 , }  &  {  F f_1 =  0 ,} \\
{ V e_0 = 0 , }  & {  V f_0 = f_1 , }  & { V e_1=    e_0   ,} & {  V f_1 = 0 }.
\end{array}
\]
\item
If equality holds in (\ref{stray bound 2}),    there is a $\Delta$-linear isomorphism $D/pD\iso \mathbb{D}$ under which 
\begin{equation}\label{upper family}
\begin{array}{llll}
{ F e_0 =  0 , }  & {  F f_0 =  u^p f_1 , }  & {  F e_1= u^p e_0, }  & {  F f_1 =  -u^p e_0 , } \\
{ V e_0 = 0 , }  & {  V f_0 = e_1+f_1 }  &  {  V e_1=e_0, }   &  {  V f_1 = 0.  }
\end{array}
\end{equation}
where $u$ is the image of $-p/\tau_1^{p+1}=-\tau^{p+1}/p^p$   under $\co^\times \to k^\times$.
\end{enumerate}
\end{theorem}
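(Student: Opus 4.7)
The plan is to mirror the argument of Theorem \ref{thm:polar stratum 1}, interchanging the roles of the two embeddings $j_0,j_1:\Z_{p^2}\to \co$ throughout \S \ref{ss:second calculation} and Proposition \ref{prop:second case}. Concretely, I will first establish the symmetric analog of Proposition \ref{prop:second case}: if $\Pi \Omega_0(G_k^\vee)\neq 0$, then
\[
\frac{p}{p+1} \le \ord(\tau_0)
\]
with strict inequality exactly when $G_k$ is superspecial, and with the same kind of explicit congruence when equality holds. This is obtained by running the proofs of Lemmas \ref{lem:coordinates 2}, \ref{lem:newton 2}, \ref{lem:congruence} and Proposition \ref{prop:second case} verbatim after swapping the indices $0\leftrightarrow 1$ (so that the hypothesis $\Pi\Omega_0(G_k^\vee)\neq 0$ forces $\ord(t_1)=0$ in the new basis, etc.).

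Next, for the equivalence claim, I would argue as follows. If $\ord(\tau)\ge p/(p+1)$, then Theorem \ref{thm:stupidspecial stratum} rules out $\Pi\Omega(G_k^\vee)=0$, and Theorem \ref{thm:polar stratum 1} rules out $\Pi\Omega_1(G_k^\vee)\neq 0$ (which would force $\ord(\tau)\le 1/(p+1)<p/(p+1)$), so we must have $\Pi\Omega_0(G_k^\vee)\neq 0$. Conversely, $\Pi\Omega_0(G_k^\vee)\neq 0$ gives the bound on $\ord(\tau_0)=\ord(\tau)$ via the symmetric analog of Proposition \ref{prop:second case} established in the previous paragraph.

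For the explicit description of $D/pD$, I carry out the index-swapped version of the computation at the end of the proof of Theorem \ref{thm:polar stratum 1}. Choose the $\co^\flat$-basis of $M^\flat$ provided by the swapped Lemma \ref{lem:coordinates 2} (so $\psi$ sends $e_1\mapsto e_0$, $e_0\mapsto t e_1$, $f_1\mapsto t f_0$, $f_0\mapsto se_1 + f_1$), use $\phi\circ\psi=\varpi$ to read off $\phi$, reduce modulo $\mathfrak{m}^\flat$, and transport these operators through the identification
\[
M^\flat/\mathfrak{m}^\flat M^\flat \xrightarrow{x\mapsto 1\otimes x} \sigma^*(M^\flat/\mathfrak{m}^\flat M^\flat) \iso M_\crys/pM_\crys \iso D/pD
\]
of Theorem \ref{thm:BKF}. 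In the superspecial case $s=0$ gives the formulas of part (1), while in the non-superspecial case $s=1$ gives part (2) with $u$ the reduction of $\varpi/t^p\in (\co^\flat)^\times$; the swapped Lemma \ref{lem:congruence} together with Lemma \ref{lem:sharp reduction} identifies this with the reduction of $-p/\tau_1^{p+1}=-\tau^{p+1}/p^p$ via $\tau_1=p/\tau$. As in Theorem \ref{thm:stupidspecial stratum}, the lift-and-apply-$F$ trick upgrades the a priori scalar ``$u^p$'' appearing in front of the $V$-operator entries to $1$, fixing the normalization.

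The main obstacle is purely bookkeeping: making sure the index swap is applied consistently in the computations of $\phi$ and of the reduction-to-$k^\times$ identity, so that the unit $u$ in part (2) is correctly identified with $-p/\tau_1^{p+1}$ rather than $-p/\tau_0^{p+1}$. Once the translation $\tau_1=p/\tau$ via Proposition \ref{prop:period product} is applied, the formula $-\tau^{p+1}/p^p$ falls out, and the formulas (\ref{upper family}) are seen to be precisely those of (\ref{lower family}) under the ``flip'' swapping $e_i\leftrightarrow e_{1-i}$, $f_i\leftrightarrow f_{1-i}$ and $F\leftrightarrow V$, as one would expect from Cartier/Dieudonn\'e duality symmetry.
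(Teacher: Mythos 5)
Your proposal is correct and takes essentially the same route as the paper, whose proof simply notes that (\ref{stray bound 2}) is equivalent to $\ord(\tau_1)\le 1/(p+1)$ via $\tau_0\tau_1=p$ and then repeats the proof of Theorem \ref{thm:polar stratum 1} with the indices $0$ and $1$ reversed everywhere. The only inaccuracy is your closing aside: (\ref{upper family}) is obtained from (\ref{lower family}) by the index swap $e_i\leftrightarrow e_{1-i}$, $f_i\leftrightarrow f_{1-i}$ alone, \emph{without} also exchanging $F$ and $V$ (e.g.\ $Ve_1=e_0$ has no factor $u^p$, whereas the $F\leftrightarrow V$ flip of $Fe_0=u^pe_1$ would produce one); this is inessential since your argument proceeds by direct computation.
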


\begin{proof}
Recalling (\ref{HT coord}), the inequality (\ref{stray bound 2}) is equivalent to 
\[
\ord(\tau_1) \le \frac{1}{p+1}.
\]
Using this observation, the proof is identical to that of Theorem \ref{thm:polar stratum 1}, but with the indices $0$ and $1$ reversed everywhere.
\end{proof}

\begin{corollary}
The $p$-divisible group $G_k$ is superspecial if and only if 
\[
\ord(\tau) \not\in \left\{  \frac{1}{p+1} , \frac{p}{p+1} \right\}.
\]
Moreover, the superspecial locus of $X(C)$ is a union of three Ekedahl-Oort strata, characterized as follows:
\begin{enumerate}
\item
The subset of $X(C)$ defined by 
\[
\frac{1}{p+1} < \ord(\tau) < \frac{p}{p+1}
\]
is an Ekedahl-Oort stratum.  On this stratum  $\Pi \Omega(G_k^\vee)=0$.
\item
The subset of $X(C)$ defined by 
\[
\ord(\tau) < \frac{1}{p+1},
\]
is an Ekedahl-Oort stratum.  On this stratum  $\Pi \Omega_1(G_k^\vee)\neq 0$.
\item
The subset of $X(C)$ defined by 
\[
 \ord(\tau) > \frac{p}{p+1} .\
\]
is an Ekedahl-Oort stratum.  On this stratum  $\Pi \Omega_0(G_k^\vee) \neq 0$.
\end{enumerate}
\end{corollary}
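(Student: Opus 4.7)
The plan is to repackage the three preceding theorems (Theorems \ref{thm:stupidspecial stratum}, \ref{thm:polar stratum 1}, and \ref{thm:polar stratum 2}) together with the superspeciality criterion of Proposition \ref{prop:oort}. Those theorems already partition $X(C)$ into five regions indexed by $\ord(\tau)$ and, for each region, give an explicit table for the operators $F, V$ and the $\Delta$-action on $D/pD$. In particular, the isomorphism class of $G_k[p]$ endowed with its $\Delta$-action is manifestly constant on each of the three proposed strata.

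The first main step is to check superspeciality via criterion (4) of Proposition \ref{prop:oort}, namely $FD = VD$. A direct inspection of the tables in Theorem \ref{thm:stupidspecial stratum} and in parts~(1) of Theorems \ref{thm:polar stratum 1} and \ref{thm:polar stratum 2} shows that the $F$- and $V$-images coincide as $k$-subspaces of $D/pD$ on each of the three proposed strata; conversely, on the boundary loci $\ord(\tau) \in \{1/(p+1),\, p/(p+1)\}$, the tables of parts~(2) of the latter two theorems exhibit $FD \neq VD$ directly (this non-superspeciality is also already recorded in Proposition \ref{prop:second case}). That settles the \emph{if and only if} clause of the corollary.

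The second main step is to show that the three superspecial regions really are three \emph{distinct} Ekedahl-Oort strata. The natural invariant is the $\Delta$-action on $\Omega(G_k^\vee)$, which is a functor of $(G_k[p], \Delta)$ via the identification $\Omega(G_k^\vee) \iso D/VD$ from covariant Dieudonné theory. Since $\Pi$ swaps the rank-one summands $\Omega_0(G_k^\vee)$ and $\Omega_1(G_k^\vee)$ and squares to $p = 0$ on $\Omega(G_k^\vee)$, the conditions $\Pi\Omega_0 \neq 0$ and $\Pi\Omega_1 \neq 0$ are mutually exclusive; the three regions are thereby characterized by $\Pi\Omega(G_k^\vee) = 0$, by $\Pi\Omega_1(G_k^\vee) \neq 0$, and by $\Pi\Omega_0(G_k^\vee) \neq 0$ respectively, which are precisely the conditions already attached to each case by the three theorems. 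There is essentially no new obstacle here; the corollary is a clean repackaging, and the only point demanding a moment of care is the verification that the $\Pi$-action on $\Omega(G_k^\vee)$ is indeed an invariant of $G_k[p]$ with its $\Delta$-action (as opposed to an invariant of $G_k$ itself), which is clear from the Dieudonné-module description.
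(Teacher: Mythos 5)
Your proposal is correct and is essentially the paper's own proof: the paper likewise reads off superspeciality by applying Proposition \ref{prop:oort} to the explicit $F,V$-tables of Theorems \ref{thm:stupidspecial stratum}, \ref{thm:polar stratum 1}, and \ref{thm:polar stratum 2} (it uses criterion (3), $V^2D\subset pD$, where you use the equivalent criterion (4)), and then declares the remaining assertions clear from those theorems, with your $\Pi$-action observation supplying the detail that the three superspecial tables are pairwise non-isomorphic as $\Delta$-modules. One small correction: for the covariant Dieudonn\'e module the identification is $\Omega(G_k^\vee)\cong VD/pD$, whereas $D/VD\cong\Lie(G_k)$; this slip is harmless here, since the $\Pi$-action on either $VD/pD$ or $D/VD$ (together with the $\mathbb{Z}_{p^2}$-eigenspace decomposition, which any $\Delta$-linear isomorphism must preserve) separates the three cases, and the specific assertions about which of $\Pi\Omega_0(G_k^\vee)$, $\Pi\Omega_1(G_k^\vee)$ vanishes are quoted directly from the cited theorems rather than rederived.
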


\begin{proof}
Recall from Proposition \ref{prop:oort} that $G_k$ is superspecial if and only if $V^2$ annihilates $D/pD$.
Given this, all parts of the claim are clear from
 Theorems \ref{thm:stupidspecial stratum}, \ref{thm:polar stratum 1}, and  \ref{thm:polar stratum 2}.
\end{proof}

Now consider the locus of points 
\[
\left\{ \tau \in C : \ord(\tau) = \frac{1}{p+1} \right\}  \cup \left\{ \tau \in C : \ord(\tau) = \frac{p}{p+1} \right\}
\subset X(C)
\]
at which the corresponding $p$-divisible group does not have superspecial reduction.
This set is a union of infinitely many  Ekedahl-Oort strata.

\begin{corollary}
The fibers of the composition
\[
\left\{ \tau \in C : \ord(\tau) = \frac{1}{p+1} \right\} \map{ \tau \mapsto p/\tau^{p+1}} \co^\times \to k^\times 
\]
are Ekedahl-Oort strata, as are the fibers of the composition
\[
\left\{ \tau \in C : \ord(\tau) = \frac{p}{p+1} \right\} \map{ \tau \mapsto \tau^{p+1}/p^p} \co^\times \to k^\times .
\]
\end{corollary}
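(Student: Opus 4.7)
The plan is to reduce the statement to a classification of Dieudonné modules with $\Delta$-action, and then to extract the relevant invariant. By Theorem \ref{thm:BKF}(2) and the Dieudonné equivalence between finite commutative $p$-group schemes over $k$ and $k$-Dieudonné modules, the Ekedahl-Oort stratum of $\tau$ is determined by the isomorphism class of $(D/pD, F, V)$ together with its $\Delta$-action.

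On the locus $\ord(\tau) = 1/(p+1)$, Theorem \ref{thm:polar stratum 1}(2) provides a $\Delta$-linear isomorphism of $(D/pD, F, V)$ with a standard Dieudonné module $\mathbb{D}_u$ carried by $\mathbb{D}$ with operators given by (\ref{lower family}), where $u \in k^\times$ is the image of $-p/\tau^{p+1}$. Since the fibers of $\tau \mapsto p/\tau^{p+1}$ and $\tau \mapsto -p/\tau^{p+1}$ coincide, two points in the same fiber have isomorphic Dieudonné modules (hence isomorphic $G_k[p]$ with $\Delta$-action), giving the forward inclusion of fibers into strata. It remains to prove that the isomorphism class of $\mathbb{D}_u$ determines $u$ uniquely.

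The core of the argument will be to classify $\Delta$-linear $k$-linear isomorphisms $\phi : \mathbb{D}_u \to \mathbb{D}_{u'}$. The relations (\ref{pi relations}) together with $\Delta$-linearity and the decomposition $\mathbb{D} = \mathbb{D}_0 \oplus \mathbb{D}_1$ should force any such $\phi$ to take the shape
\[
\phi(e_0) = \alpha_0 e_0, \quad \phi(f_0) = \gamma_0 e_0 + \alpha_1 f_0, \quad \phi(e_1) = \alpha_1 e_1, \quad \phi(f_1) = \gamma_1 e_1 + \alpha_0 f_1,
\]
with $\alpha_0, \alpha_1 \in k^\times$ and $\gamma_0, \gamma_1 \in k$. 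I then impose $F' \phi = \phi F$ and $V' \phi = \phi V$, being careful with the $\sigma$- and $\sigma^{-1}$-semi-linearities. The $F$-intertwining on $f_1$ will yield $\gamma_0 = 0$ and $\alpha_1 = \alpha_0^p (u'/u)^p$; the $F$-intertwining on $f_0$ will then yield $\alpha_1^{p-1} = (u/u')^p$; and the $V$-intertwining on $e_0$ will give $\alpha_1^p = \alpha_0$. Combining these should collapse to $(u/u')^{p^2} = 1$, which in characteristic $p$ forces $u = u'$.

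An entirely symmetric argument, using Theorem \ref{thm:polar stratum 2}(2) and the formulas (\ref{upper family}), handles the locus $\ord(\tau) = p/(p+1)$. The main obstacle is the last step of the invariant extraction: one must verify that neither the $F$-intertwining alone nor the $V$-intertwining alone suffices to pin down $u$, that the $\gamma_i$ parameters really do get forced to values which cannot absorb a discrepancy between $u$ and $u'$, and that the resulting polynomial constraints on $\alpha_0, \alpha_1 \in k^\times$ genuinely eliminate $u/u' \neq 1$ (rather than merely some quotient thereof).
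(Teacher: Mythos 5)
Your proposal is correct and follows essentially the same route as the paper: both reduce the corollary to showing that a $\Delta$-linear isomorphism $(\mathbb{D},F_u,V_u)\to(\mathbb{D},F_{u'},V_{u'})$ forces $u=u'$ by explicitly classifying the intertwiners, and your computation of the constraints on $\alpha_0,\alpha_1$ (yielding $(u/u')^{p^2}=1$, hence $u=u'$ in characteristic $p$) checks out. The only organizational difference is that the paper first exploits the $u$-independence of $V_u$ to pin $\phi$ down to scalar form with $a\in\F_p$ before invoking the $F$-relation, whereas you derive the general $\Delta$-linear shape and then mix the $F$- and $V$-relations; this is a bookkeeping difference, not a different argument.
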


\begin{proof}
For each $u\in k^\times$ let   $F_u$ and $V_u$ be the operators on $\mathbb{D}$  defined by (\ref{lower family}).
Note that $V_u$ is actually independent of $u$. 
We claim that   the existence of a $\Delta$-linear isomorphism
\[
(\mathbb{D} , F_u ,V_u ) \map{\phi} (\mathbb{D} , F_{u'} ,V_{u'} )
\]
implies  $u=u'$.   To see this one  checks that the first relation in
\begin{equation}\label{intertwine}
\phi \circ V_u = V_{u'}\circ \phi ,\quad \phi \circ F_u = F_{u'}\circ \phi
\end{equation}
 implies that $\phi$ has the form
\[
\phi( e_0) = a e_0,\quad \phi(e_1) = ae_1 ,\quad \phi(f_0) = a f_0 ,\quad \phi(f_1) = a f_1 + b e_1
\]
for some $a\in \F_p$ and $b\in k$.
Using this, one checks that $\phi$ commutes with both $F_u$ and $F_{u'}$.  The second relation in (\ref{intertwine}) then implies that $F_u=F_{u'}$, and hence $u=u'$.

The same is true if we replace  the operators of (\ref{lower family}) with those of (\ref{upper family}), and 
so  the corollary  follows from  Theorems \ref{thm:polar stratum 1} and  \ref{thm:polar stratum 2}.
\end{proof}

\bibliographystyle{amsalpha}

\begin{thebibliography}{Moo04}

\bibitem[AG03]{AG}
F.~Andreatta and E.~Z. Goren, \emph{Geometry of {H}ilbert modular varieties
  over totally ramified primes}, Int. Math. Res. Not. (2003), no.~33,
  1786--1835. \MR{1987504}


  
  
\bibitem[CS17]{CS}
Ana Caraiani and Peter Scholze, \emph{On the generic part of the cohomology of
  compact unitary {S}himura varieties}, Ann. of Math. (2) \textbf{186} (2017),
  no.~3, 649--766. \MR{3702677}

\bibitem[Far15]{fargues}
Laurent Fargues, \emph{Quelques r\'esultats et conjectures concernant la
  courbe}, Ast\'erisque (2015), no.~369, 325--374. \MR{3379639}

\bibitem[KR00]{KR}
Stephen~S. Kudla and Michael Rapoport, \emph{Height pairings on {S}himura
  curves and {$p$}-adic uniformization}, Invent. Math. \textbf{142} (2000),
  no.~1, 153--223. \MR{1784798}

\bibitem[Lau10]{lau-frames}
Eike Lau, \emph{Frames and finite group schemes over complete regular local
  rings}, Doc. Math. \textbf{15} (2010), 545--569. \MR{2679066}

\bibitem[Lau18]{lau-BKF}
\bysame, \emph{Dieudonn\'e theory over semiperfect rings and perfectoid rings}, Compos. Math. \textbf{154} (2018),  1974-2004. \MR{3867290}

\bibitem[Lau19]{lau-Galois}
\bysame, \emph{Displayed equations for Galois representations},  Nagoya Math. J. 235 (2019), 86–114.
\MR{3986712}

\bibitem[Moo04]{Moonen}
Ben Moonen, \emph{A dimension formula for {E}kedahl-{O}ort strata}, Ann. Inst.
  Fourier (Grenoble) \textbf{54} (2004), no.~3, 666--698. \MR{2097418}

\bibitem[Oor75]{Oort75}
Frans Oort, \emph{Which abelian surfaces are products of elliptic curves?},
  Math. Ann. \textbf{214} (1975), 35--47. \MR{0364264}

\bibitem[Oor01]{Oort01}
\bysame, \emph{A stratification of a moduli space of abelian varieties}, Moduli
  of abelian varieties ({T}exel {I}sland, 1999), Progr. Math., vol. 195,
  Birkh\"auser, Basel, 2001, pp.~345--416. \MR{1827027}



\bibitem[SW13]{SW-moduli}
Peter Scholze and Jared Weinstein, \emph{Moduli of {$p$}-divisible groups},
  Camb. J. Math. \textbf{1} (2013), no.~2, 145--237. \MR{3272049}

\bibitem[SW17]{Snotes}
\bysame, \emph{Berkeley lectures on $p$-adic geometry},  available at
 \texttt{http://www.math.uni-bonn.de/people/scholze/Berkeley.pdf},  (2017).

\bibitem[VW13]{VW}
Eva Viehmann and Torsten Wedhorn, \emph{Ekedahl-{O}ort and {N}ewton strata for
  {S}himura varieties of {PEL} type}, Math. Ann. \textbf{356} (2013), no.~4,
  1493--1550. \MR{3072810}

\bibitem[Vig80]{vigneras}
Marie-France Vigner\'as, \emph{Arithm\'etique des alg\`ebras de quaternions}, 
Lecture Notes in Mathematics, 800. Springer, Berlin, 1980.

\bibitem[Zha18]{Zhang}
Chao Zhang, \emph{Ekedahl-{O}ort strata for good reductions of {S}himura
  varieties of {H}odge type}, Canad. J. Math. \textbf{70} (2018), no.~2,
  451--480. \MR{3759007}

\bibitem[Zin01]{zink}
Thomas Zink, \emph{Windows for displays of {$p$}-divisible groups}, Moduli of
  abelian varieties ({T}exel {I}sland, 1999), Progr. Math., vol. 195,
  Birkh\"auser, Basel, 2001, pp.~491--518. \MR{1827031}

\end{thebibliography}

\providecommand{\bysame}{\leavevmode\hbox to3em{\hrulefill}\thinspace}
\providecommand{\MR}{\relax\ifhmode\unskip\space\fi MR }
\providecommand{\MRhref}[2]{%
  \href{http://www.ams.org/mathscinet-getitem?mr=#1}{#2}
}
\providecommand{\href}[2]{#2}

\end{document}